\newtheorem{Theorem}{Theorem}[section]
\newtheorem{lemma}[Theorem]{Lemma}
\newtheorem{proposition}[Theorem]{Proposition}
\theoremstyle{definition}
\newtheorem{remark}[Theorem]{Remark}
\newcommand{\R}{{\mathbb R}}
\newcommand{\cH}{{\mathcal H}}
\begin{document}

\title[Inertial dynamics and proximal algorithms]{Convergence  of inertial dynamics and proximal algorithms governed by maximally monotone operators}

\author{Hedy Attouch}
\address{Institut Montpelli\'erain Alexander Grothendieck, UMR 5149 CNRS, Universit\'e Montpellier, place Eug\`ene Bataillon,
34095 Montpellier cedex 5, France}
\email{hedy.attouch@univ-montp2.fr}

\author{Juan Peypouquet}
\address{Universidad T\'ecnica Federico Santa Mar\'\i a, Av Espa\~na 1680, Valpara\'\i so, Chile}
\email{juan.peypouquet@usm.cl}

\thanks{H. Attouch: Effort sponsored by the Air Force Office of Scientific Research, Air Force Material Command, USAF, under grant number F49550-1 5-1-0500. J. Peypouquet: supported by Fondecyt Grant 1140829, Millenium Nucleus ICM/FIC RC130003 and Basal Project CMM Universidad de Chile.}

\date{October 31, 2017}

\maketitle

\paragraph{\textbf{Abstract}} We study the behavior of the trajectories of a second-order differential equation with vanishing damping, governed by the Yosida regularization of a maximally monotone operator with time-varying index, along with a new {\em Regularized Inertial Proximal Algorithm} obtained by means of a convenient finite-difference discretization. These systems are the counterpart to accelerated forward-backward algorithms in the context of maximally monotone operators. A proper tuning of the parameters allows us to prove the weak convergence of the trajectories to zeroes of the operator. Moreover, it is possible to estimate the rate at which the speed and acceleration vanish. We also study the effect of perturbations or computational errors that leave the convergence properties unchanged. We also analyze a growth condition under which strong convergence can be guaranteed. A simple example shows the criticality of the assumptions on the Yosida approximation parameter, and allows us to illustrate the behavior of these systems compared with some of their close relatives.

\vspace{0.3cm}

\paragraph{\textbf{Key words}:}  asymptotic stabilization;    large step proximal method; damped inertial  dynamics; Lyapunov analysis; maximally monotone operators;  time-dependent viscosity; vanishing viscosity; Yosida regularization.
\vspace{0.3cm}

\paragraph{\textbf{AMS subject classification}} 37N40, 46N10, 49M30, 65K05, 65K10, 90B50, 90C25.

\vspace{0.5cm}

\section{Introduction}

Let $\mathcal H$ be a real Hilbert space endowed with the scalar product $\langle \cdot,\cdot\rangle$ and norm $\|\cdot\|$. Given a maximally monotone operator $A: \mathcal H \rightarrow 2^{\mathcal H}$, we study the asymptotic behavior, as time goes to $+\infty$, of  the trajectories of the
second-order differential equation
\begin{equation} \label{E:system}
\ddot{x}(t)   + \frac{\alpha}{t}  \dot{x}(t)     +A_{\lambda(t)}(x(t)) = 0, \qquad t>t_0>0,
\end{equation}
where
$$
A_{\lambda} = \frac{1}{\lambda} \left( I- \left( I + \lambda A \right)^{-1} \right)
$$
stands for the Yosida regularization of $A$ of index $\lambda>0$ (see Appendix \ref{SS:Yosida} for its main properties), along with a new {\em Regularized Inertial Proximal Algorithm} obtained by means of a convenient finite-difference discretization of (\ref{E:system}). The design of rapidly convergent dynamics and algorithms to solve monotone inclusions of the form 
\begin{equation} \label{E:0inAx}
0\in Ax
\end{equation} 
is a difficult problem of fundamental importance in optimization, equilibrium theory, economics and game theory, partial differential equations, statistics, among other subjects. We shall come back to this point shortly. The dynamical systems studied here are closely related to Nesterov's acceleration scheme, whose rate of convergence for the function values is worst-case optimal. We shall see that, properly tuned, these systems converge to solutions of (\ref{E:0inAx}), and do so in a robust manner. We hope to open a broad avenue for further studies concerning related stochastic approximation and optimization methods. Let us begin by putting this in some more context. In all that follows, the set of solutions to (\ref{E:0inAx}) will be denoted by $S$.

\subsection{From the heavy ball to fast optimization}
Let $\Phi : \cH \to \mathbb R$ be a continuously differentiable convex function. The {\em heavy ball with friction} system
\begin{equation} \label{E:heavyball}
{\rm (HBF)}  \qquad    \ddot x(t)+\gamma\dot{x}(t)+\nabla\Phi (x(t))=0
\end{equation}
was first introduced, from an optimization perspective, by Polyak \cite{Polyak2}. The convergence of the trajectories of  (HBF) in the case of a convex function $\Phi$ has been obtained by \'Alvarez in \cite{Al}. In recent years, several studies  have been devoted to the study of the Inertial Gradient System ${\rm(IGS)_{\gamma}} $, with a time-dependent damping coefficient $\gamma(\cdot)$
\begin{equation} \label{E:IGS}
{\rm(IGS)_{\gamma}} \qquad \ddot x(t)+\gamma(t) \dot{x}(t)+\nabla\Phi (x(t))=0.
\end{equation}
A particularly interesting situation concerns the case $\gamma(t) \to 0$ of a vanishing damping coefficient. Indeed, as pointed out by Su, Boyd and Cand\`es in \cite{SBC}, the $\rm{(IGS)_{\gamma} }$ system with $\gamma (t) = \frac{\alpha}{t}$, namely:
\begin{equation}\label{E-FISTA}
\ddot{x}(t)   + \frac{\alpha}{t} \dot{x}(t) +    \nabla \Phi (x(t))  = 0,
\end{equation}
can be seen as a continuous version of the fast gradient
method of Nesterov (see \cite{Nest1,Nest2}), and its widely used successors, such as the Fast Iterative Shrinkage-Thresholding Algorithm (FISTA) of Beck-Teboulle \cite{BT}. When $\alpha\ge 3$, the rate of convergence of these methods is $\Phi(x_k)-\min_\cH \Phi= \mathcal O \left(\frac{1}{k^2}\right)$, where $k$ is the number of iterations.
Convergence of the trajectories generated by \eqref{E-FISTA}, and of the sequences generated by Nesterov's method, has been an elusive question for decades. However, when considering (\ref{E-FISTA}) with $\alpha >3$, it was shown by Attouch-Chbani-Peypouquet-Redont \cite{ACPR} and  May \cite{May},  that each trajectory converges weakly to an optimal solution, with the improved rate of convergence $\Phi(x(t))-\min_\cH \Phi = o (\frac{1}{t^2})$. Corresponding results for the algorithmic case have been obtained by Chambolle-Dossal \cite{CD} and Attouch-Peypouquet \cite{AP}.
Independently, and mainly motivated by applications to partial differential equations and control problems, Jendoubi-May \cite{JM} and Attouch-Cabot \cite{AC1,AC2} consider more general time-dependent damping coefficient $\gamma(\cdot)$. The latter includes the corresponding forward-backward algorithms, and unifies previous results.
In the case of a convex lower semicontinuous proper function
$\Phi: \mathcal H \rightarrow \R \cup + \{\infty \}$,
the ${\rm(IGS)_{\gamma}}$ dynamic (\ref{E:IGS}) is not well-posed, see \cite{ACRe}. A natural idea is to replace $\Phi$ by its Moreau envelope $\Phi_\lambda$, and consider 
\begin{equation}\label{E-FISTA3}
\ddot{x}(t)   + \frac{\alpha}{t} \dot{x}(t) +    \nabla \Phi_{\lambda (t)} (x(t))  = 0
\end{equation}
(see \cite{BC,Pey} for details, including the fact that $\nabla \Phi_\lambda=(\partial\Phi)_\lambda$). Fast minimization and convergence properties for the trajectories of (\ref{E-FISTA3}) and their related algorithms have been recently obtained by Attouch-Peypouquet-Redont \cite{AP-new}. 
This also furnishes a viable path towards its extension to the maximally monotone operator setting. It is both useful and natural to consider a time-dependent regularization parameter, as we shall explain now.

\subsection{Inertial dynamics and cocoercive operators} \label{SS:cocoercivity}
In analogy with (\ref{E:heavyball}), \'Alvarez-Attouch \cite{AA1} and Attouch-Maing\'e \cite{AM} studied the equation
\begin{equation} \label{E:AM1}
\ddot{x}(t) + \gamma  \dot{x}(t) + A(x(t)) = 0,
\end{equation}
where $A$ is a cocoercive operator. Cocoercivity plays an important role, not only to ensure the existence of solutions, but also in analyzing their long-term behavior. They discovered that it was possible to prove weak convergence to a solution of (\ref{E:0inAx}) if the cocoercivity parameter $\lambda$ and the damping coefficient $\gamma$ satisfy $\lambda \gamma^2 >1$. Taking into account that for $\lambda>0$, the operator $A_{\lambda}$ is $\lambda$-cocoercive and that $A_{\lambda}^{-1} (0) = A^{-1}(0)$ (see Appendix \ref{SS:Yosida}), we immediately deduce that, under the condition $\lambda \gamma^2 >1$, each trajectory of 
$$
\ddot{x}(t) + \gamma \dot{x}(t) + A_{\lambda}(x(t)) = 0
$$
converges weakly to a zero of $A$. In the quest for a faster convergence, our analysis of equation (\ref{E:system}), led us to introduce a time-dependent regularizing parameter $\lambda(\cdot)$ satisfying
$$\lambda(t)\times\frac{\alpha^2}{t^2}>1$$
for $t\ge t_0$. A similar condition appears in the study of the corresponding algorithms. We mention that a condition of the type $\lambda(t)\sim t^2$ appears to be critical to obtain fast convergence of the values in the case $A=\partial\Phi$, according to the results in \cite{AP-new}.
But system (\ref{E:system}) is not just an interesting extension of the heavy ball dynamic. It also arises naturally in stochastic variational analysis.

\subsection{Connections with stochastic optimization}
Let us present some links bewteen our approach and stochastic optimization.

\subsubsection{Stochastic approximation algorithms}
A close relationship between stochastic approximation algorithms and inertial dynamics with vanishing damping was established by Cabot, Engler and Gadat in \cite{CEG1}. Let us briefly (and skipping the technicalities) comment on this link and see how it naturally extends to our setting.

When $A$ is a sufficiently smooth operator, stochastic approximation algorithms are frequently used to approximate, with a random version of  the Euler's scheme, the behavior of the ordinary differential equation $\dot{x}(t) + A(x(t))=0$. If $A$ is a general maximally monotone operator, a natural idea is to apply this method to the regularized equation $\dot{x}(t) + A_{\lambda}(x(t))=0$, which has the same equilibrium points, since $A$ and  $A_{\lambda}$ have the same set of zeros. 
Consider first the case where $ \lambda>0$ is a fixed parameter. If we denote by $(X_n)_{n\in \mathbb N} $ the random approximants, $(w_n)_{n\geq 1}$ and $(\eta_n)_{n\geq 1}$ two auxiliary stochastic
processes, the recursive approximation is written as
\begin{equation}\label{stoch1}
X_{n+1} = X_n -\epsilon_{n+1}A_{\lambda}(X_n,w_{n+1}) + \epsilon_{n+1}\eta_{n+1},
\end{equation}
where $\epsilon_{n}$ is a sequence of positive real numbers, and $\eta_n$ is
a small residual perturbation. Under appropriate conditions 
(see \cite{CEG1}), solutions of  (\ref{stoch1})  asymptotically
behave like those of the deterministic differential equation 
$\dot{x}(t) + A_{\lambda}(x(t))=0$. A very common case occurs when $(w_n)_{n\geq 1}$  is a sequence of independent identically distributed variables with distribution $\mu$ and 
$$
A_{\lambda}(x) =\int A_{\lambda}(x, \omega) \mu (d\omega).
$$
When $A= \partial \Phi$ derives from a potential, this gives a stochastic optimization algorithm (see \cite{Rob_Mon}). When the random variable $A_{\lambda}(X_n,\cdot)$  has a large variance, the stochastic approximation of $A_{\lambda}(X_n)$ by $A_{\lambda}(X_n,w_{n+1})$ can be numerically improved by using the following modified recursive definition:
\begin{equation}\label{stoch2}
X_{n+1} = X_n -\frac{\epsilon_{n+1}}{\sum \epsilon_i}\sum_i \epsilon_i A_{\lambda}(X_i,w_{i+1}) .
\end{equation}
As proved in \cite[Appendix A]{CEG1}, the limit ordinary differential equation has the form 
\begin{equation}\label{stoch3}
\ddot{X}(t) + \frac{1}{t + b} \dot{X}(t) + \frac{1}{t + b}A_{\lambda} (X(t))=0,
\end{equation}
where $b$ is a positive parameter. After the successive changes of the time variable $t \mapsto t-b$, 
$t \mapsto 2\sqrt{t}$, we finally obtain
\begin{equation}\label{stoch4}
\ddot{X}(t) + \frac{1}{t} \dot{X}(t) + A_{\lambda} (X(t))=0,
\end{equation}
which is precisely \eqref{E:system} with $\alpha=1$ and $\lambda(t)\equiv\lambda$.

This opens a number of possible lines of research: First, it would be interesting to see how the coefficient $ \alpha $ will appear in the stochastic case. Next, it appears important to understand the connection between \eqref{stoch3} and \eqref{stoch4} with a time-dependent coefficient $ \lambda (\cdot) $. Combining these two developments, we could be able to extend the stochastic approximation method to a wide range of equilibrium problems, and expect that the fast convergence results to hold, considering that \eqref{E:system} is an accelerated method in the case of convex minimization when $ \alpha \geq 3$. In the case of a gradient operator, the above results also suggest a natural link with the epigraphic law of large numbers of Attouch and Wets \cite {AW-LLN}. The analysis can be enriched using the equivalence between epi-convergence of a sequence of functions and the convergence of the associated evolutionary semigroups.

\subsubsection{Robust optimization, regularization}

Suppose we are interested in finding a zero of an operator $A$, which is not exactly known (a situation that is commonly encountered in inverse problems, for instance). If the uncertainty follows a {\it known} distribution, then the stochastic approximation approach described above may be applicable. Otherwise, if $A$ is known to be sufficiently close to some model $\hat A$ (in a sense to be precised below), an alternative is to use robust analysis techniques, interpreting $A$ as a perturbation of $\hat A$. Regularization techniques (like Tikhonov's, where the operator $A$ is replaced by a regularized operator $A+ \rho B$) follow a similar logic. We recall the notion of graph-distance bewteen two operators, introduced by Attouch and Wets in \cite{AW1, AW2} (see also \cite{RW}). It can be formulated using Yosida approximations as
$$
d_{\lambda, \rho} (A, \hat A) = \sup_{\|x\|\leq \rho} \|  A_{\lambda}(x) - \hat A_{\lambda}(x)\|,
$$
where $\rho$ is a positive parameter. These pseudo-distances are particularly well adapted to our dynamics, which is expressed using Yosida approximations. In the case of minimization problems, they are associated with the notion of epi-distance. The calculus rules developed in \cite{AW1, AW2} make it possible to estimate these distances in the case of operators with an additive or composite structure. In Section \ref{Stab}, the convergence of the algorithm is examined in the case of errors that go asymptotically to zero. A more general situation, where approximation and iterative methods are coupled in the presence of noise, is an ongoing research topic. See  \cite{SLB,MRSV,VSBV} for recent results in this direction.

\subsection{Organization of the paper}
In Section \ref{S:minimizing}, we study the asymptotic convergence properties of the trajectories of the continuous dynamics \eqref{E:system}.
We prove that each trajectory converges weakly, as $t\to+\infty$, to a solution of \eqref{E:0inAx}, with $\|\dot x(t)\|=O(1/t)$ and $\|\ddot x(t)\|=O(1/t^2)$. Then, in Section \ref{S:FIPA}, we consider the corresponding proximal-based inertial algorithms and prove parallel convergence results. The effect of external perturbations on both the continuous-time system and the corresponding algorithms is analyzed in 
Section \ref{Stab}, yielding the robustness of both methods. In Section \ref{strong}, we describe how convergence is improved under a quadratic growth condition. Finally, in Section \ref{rot}, we give an example that shows the sharpness of the results and illustrates the behavior of (\ref{E:system}) compared to other related systems. Some auxiliary technical results are gathered in an Appendix.

\section{Convergence results for the continuous-time system}\label{S:minimizing}

In this section, we shall study the asymptotic properties of the continuous-time system:
\begin{equation} \label{E:system_bis}
\ddot{x}(t)   + \frac{\alpha}{t}  \dot{x}(t)     +A_{\lambda(t)}(x(t)) = 0,\qquad t>t_0>0.
\end{equation}
Although some of the results presented in this paper are valid when $ \lambda (\cdot) $ is locally integrable, we shall assume $ \lambda (\cdot)$ to be continuous, for simplicity. The function $(t,x)\mapsto A_{\lambda(t)}(x)$ is continuous in $(t,x)$, and uniformly Lipschitz-continuous with respect to $x$. This makes \eqref{E:system_bis} a classical differential equation, whose solutions are unique, and defined for all $t\ge t_0$, for every initial condition $x(t_0)=x_0$, $\dot x(t_0)=v_0$ \footnote {The idea consisting in regularizing with the help of the Moreau envelopes an inertial dynamic governed by a nonsmooth operator was already used in the modeling of elastic shocks in \cite{ACRe}}. See Lemma (\ref{existence-uniqueness}) for the corresponding result in the case where $\lambda (\cdot)$ is just locally integrable.

The main result of this section is the following:

\begin{Theorem}\label{T:weak_convergence}
Let $A: \mathcal H \rightarrow 2^{\mathcal H}$ be  a maximally monotone operator such that  $S= A^{-1} (0)\neq \emptyset$. Let $x:[t_0,+\infty[\to\mathcal H$ be a solution of (\ref{E:system_bis}), where 
$$\alpha>2\quad \hbox{and}\quad\lambda (t) = (1+\epsilon) \frac{t^2}{\alpha^2} \quad \hbox{for some}\quad \epsilon > \frac{2}{\alpha -2}.$$ 
Then, $x(t)$ converges weakly, as $t\to+\infty$, to an element of $S$. Moreover $\lim_{t \to + \infty} \| \dot{x}(t)\|  =\lim_{t \to + \infty} \| \ddot{x}(t) \| =0.$
\end{Theorem}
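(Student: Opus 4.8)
The plan is to run a Lyapunov analysis in the spirit of \cite{SBC,ACPR,AP-new}, adapted to the Yosida-regularized setting. Fix $z \in S = A^{-1}(0) = A_{\lambda(t)}^{-1}(0)$ for every $t$. The natural energy to consider is
$$
\mathcal E(t) = \frac{1}{2}\|\lambda(t) A_{\lambda(t)}(x(t))\| \cdot(\ldots),
$$
but more precisely I would follow the standard template and set
$$
\mathcal E(t) = \frac12 \big\| (\alpha-1)(x(t)-z) + t\dot x(t) \big\|^2 + \frac{(\alpha-1)}{2}\, h(t) + \text{(a correction term)},
$$
where $h(t) = \|x(t)-z\|^2$ and the correction term is chosen to absorb the time-dependence of $\lambda$. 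The key structural facts I will lean on from the Appendix are: (i) $A_{\lambda}$ is $\lambda$-cocoercive, i.e. $\langle A_\lambda(x)-A_\lambda(y), x-y\rangle \ge \lambda \|A_\lambda(x)-A_\lambda(y)\|^2$; (ii) $A_{\lambda(t)}^{-1}(0)$ is independent of $\lambda(t)>0$; and (iii) the monotonicity inequality $\langle A_{\lambda(t)}(x(t)), x(t)-z\rangle \ge \lambda(t)\|A_{\lambda(t)}(x(t))\|^2 \ge 0$. The first step is to differentiate $\mathcal E$ along the trajectory, substitute $\ddot x = -\frac{\alpha}{t}\dot x - A_{\lambda(t)}(x)$, and collect terms.

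**Key steps, in order.**

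First, compute $\dot{\mathcal E}(t)$ and show that, after using the equation and cocoercivity, it is bounded above by a sum of terms of the form $-c_1 t\|\dot x(t)\|^2$, $-c_2 t\lambda(t)\|A_{\lambda(t)}(x(t))\|^2$, plus a term coming from $\dot\lambda(t)$ applied to the correction term. Here is where the hypothesis $\lambda(t) = (1+\epsilon) t^2/\alpha^2$ with $\epsilon > 2/(\alpha-2)$ enters: it is precisely the condition that makes the coefficients $c_1, c_2$ strictly positive (equivalently, it makes $\lambda(t)\alpha^2/t^2 = 1+\epsilon > 1$ with enough margin to beat the extra $\dot\lambda$ term). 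Second, integrate the resulting differential inequality to conclude $\mathcal E$ is bounded, hence $t\dot x(t) + (\alpha-1)(x(t)-z)$ is bounded and $\|x(t)-z\|$ is bounded, and moreover $\int_{t_0}^\infty t\|\dot x(t)\|^2\,dt < \infty$ and $\int_{t_0}^\infty t\lambda(t)\|A_{\lambda(t)}(x(t))\|^2\,dt < \infty$. Third, from these integral estimates together with the equation, derive the decay rates: $\|\dot x(t)\| = O(1/t)$ and then $\lim \|\dot x(t)\| = 0$ via an auxiliary argument on $\frac{d}{dt}\|\dot x(t)\|^2$ (using that $\ddot x = -\frac{\alpha}{t}\dot x - A_{\lambda(t)}(x)$ and the integrability), and similarly $\|\ddot x(t)\| \to 0$ since both terms on the right of the equation vanish. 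Fourth, establish weak convergence via Opial's lemma: show $\lim_{t\to\infty}\|x(t)-z\|$ exists for every $z\in S$ (this follows by a refined estimate on $h(t)=\|x(t)-z\|^2$, showing $\ddot h + \frac{\alpha}{t}\dot h$ has an integrable-in-$t$-weighted negative part, so $h$ converges), and show every weak sequential limit point of $x(t)$ lies in $S$. For the latter, use that $A_{\lambda(t)}(x(t))\to 0$ (a consequence of the rate estimate on $\ddot x$ and $\dot x$ via the equation) together with the identity $A_{\lambda(t)}(x(t)) \in A\big((I+\lambda(t)A)^{-1}x(t)\big)$ and the fact that $\|(I+\lambda(t)A)^{-1}x(t) - x(t)\| = \lambda(t)\|A_{\lambda(t)}(x(t))\|$; one must check this quantity stays controlled (this is delicate since $\lambda(t)\to\infty$), then pass to the limit in the graph of $A$ using its demiclosedness.

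**Main obstacle.**

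The hard part is the fourth step, specifically verifying that weak limit points of $x(t)$ are in $S$. Because $\lambda(t) = (1+\epsilon)t^2/\alpha^2 \to \infty$, controlling $\lambda(t)\|A_{\lambda(t)}(x(t))\| = \|x(t) - J_{\lambda(t)}x(t)\|$ requires more than just $A_{\lambda(t)}(x(t))\to 0$; one needs a rate like $\|A_{\lambda(t)}(x(t))\| = o(1/t^2)$ or at least $O(1/t^2)$ with a good constant, which must be squeezed out of the Lyapunov estimate (the term $\int t\lambda(t)\|A_{\lambda(t)}(x(t))\|^2\,dt<\infty$ gives $\int t^3\|A_{\lambda(t)}(x(t))\|^2\,dt<\infty$, suggesting $\|A_{\lambda(t)}(x(t))\|$ is roughly $o(t^{-2})$, so $\|x(t)-J_{\lambda(t)}x(t)\|$ is $o(1)$ — just barely enough). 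Then $J_{\lambda(t)}x(t)$ has the same weak limit points as $x(t)$, and since $A_{\lambda(t)}(x(t)) \in A(J_{\lambda(t)}x(t))$ with $A_{\lambda(t)}(x(t))\to 0$, demiclosedness of the graph of the maximally monotone operator $A$ gives $0 \in A(w)$ for any weak limit $w$. Combining with the existence of $\lim\|x(t)-z\|$ for all $z\in S$, Opial's lemma delivers weak convergence to a single point of $S$. The tuning constants $c_1, c_2 > 0$ and the exact form of the correction term in $\mathcal E$ will require careful bookkeeping, but the condition $\epsilon > 2/(\alpha-2)$ is exactly what is needed to close all the inequalities simultaneously.
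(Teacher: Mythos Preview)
Your overall architecture is sound and you correctly identify the crucial obstacle, but there is a genuine gap precisely where you flag it as ``delicate,'' and your Lyapunov setup differs from the paper's in a way that is worth making explicit.

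\medskip
\textbf{On the Lyapunov step.} The paper does \emph{not} use a Nesterov-type energy of the form $\tfrac12\|(\alpha-1)(x-z)+t\dot x\|^2+\cdots$. Instead it works directly with the anchor $h_z(t)=\tfrac12\|x(t)-z\|^2$: cocoercivity gives $\ddot h_z+\tfrac{\alpha}{t}\dot h_z+\lambda(t)\|A_{\lambda(t)}(x)\|^2\le\|\dot x\|^2$, and then one substitutes the equation $A_{\lambda(t)}(x)=-\ddot x-\tfrac{\alpha}{t}\dot x$ back into the left-hand side, expands, and integrates after multiplying by $t$. This is what produces $\int t\|\dot x\|^2<\infty$, $\|\dot x\|=O(1/t)$, $\|\ddot x\|=O(1/t^2)$, and boundedness of $x$. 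Your Nesterov-style route can be made to work (Cauchy--Schwarz on the cross term $t^2\langle\dot x,A_{\lambda(t)}(x)\rangle$ closes it), but your ``correction term'' is left unspecified, and in the absence of a potential there is no natural candidate of the Su--Boyd--Cand\`es type; you should say concretely how you handle that cross term. The paper's anchor-function approach sidesteps this entirely.

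\medskip
\textbf{The real gap.} The step you label ``just barely enough'' is not a proof. From the integral estimate
\[
\int_{t_0}^\infty \frac{1}{t}\,\|\lambda(t)A_{\lambda(t)}(x(t))\|^2\,dt<\infty
\]
one \emph{cannot} conclude $\|\lambda(t)A_{\lambda(t)}(x(t))\|\to 0$ without additional control on the time-variation of this quantity (think of functions with narrow spikes). The missing ingredient is an estimate of the form
\[
\|\gamma A_{\gamma}x-\delta A_{\delta}y\|\le 2\|x-y\|+2\|x-z\|\,\frac{|\gamma-\delta|}{\gamma}\qquad(z\in S),
\]
obtained from the resolvent identity. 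Applied with $\gamma=\lambda(t)$, $\delta=\lambda(s)$ and divided by $|t-s|$, this yields
\[
\Bigl\|\frac{d}{dt}\bigl(\lambda(t)A_{\lambda(t)}(x(t))\bigr)\Bigr\|\le 2\|\dot x(t)\|+2\|x(t)-z\|\,\frac{|\dot\lambda(t)|}{\lambda(t)}=O\!\left(\frac{1}{t}\right),
\]
using $\|\dot x\|=O(1/t)$, boundedness of $x$, and $\dot\lambda/\lambda=2/t$. With $w(t)=\|\lambda(t)A_{\lambda(t)}(x(t))\|^2$ and $\eta(t)=C/t$ one then has $|\dot w|\le\eta$, $\int w\eta<\infty$, and $\eta\notin L^1$, which forces $w\to 0$. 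This is the device the paper uses, and without it (or an equivalent regularity argument) your Step~4 does not close: you need $\lambda(t)A_{\lambda(t)}(x(t))\to 0$ strongly in order to pass to the limit in $A_{\lambda(t)}(x(t))\in A\bigl(x(t)-\lambda(t)A_{\lambda(t)}(x(t))\bigr)$ via demiclosedness.
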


\subsection{Anchor}

As a fundamental tool we will use the distance to equilibria in order to anchor the trajectory to the solution set $S=A^{-1}(0)$. To this end, given a solution trajectory $x:[t_0,+\infty[\to\mathcal H$ of (\ref{E:system_bis}), and a point $z\in \cH$, we define $h_z:[t_0,+\infty [ \to\R$ by
\begin{equation} \label{E:h_z}
h_z(t)=\frac{1}{2}\|x(t)-z\|^2.
\end{equation} 

We have the following:

\begin{lemma} \label{L:h_z}
Given $z\in S= A^{-1} (0)$, define $h_z$ by (\ref{E:h_z}). For all $t \geq t_0$, we have
\begin{equation} \label{E:ineq_h_z}
\ddot{h}_z(t) + \frac{\alpha}{t} \dot{h}_z(t) + \lambda (t) \| A_{\lambda(t)}(x(t))  \|^2 \leq \|\dot{x}(t)\|^2.
\end{equation} 
\end{lemma}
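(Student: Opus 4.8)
The plan is to differentiate $h_z$ twice and substitute the differential equation \eqref{E:system_bis}. We have $\dot h_z(t) = \langle x(t) - z, \dot x(t)\rangle$ and $\ddot h_z(t) = \langle x(t) - z, \ddot x(t)\rangle + \|\dot x(t)\|^2$. Forming the combination $\ddot h_z(t) + \frac{\alpha}{t}\dot h_z(t)$ and using \eqref{E:system_bis} to replace $\ddot x(t) + \frac{\alpha}{t}\dot x(t)$ by $-A_{\lambda(t)}(x(t))$, we obtain
\begin{equation*}
\ddot h_z(t) + \frac{\alpha}{t}\dot h_z(t) = \|\dot x(t)\|^2 - \langle x(t) - z,\, A_{\lambda(t)}(x(t))\rangle.
\end{equation*}
So everything reduces to showing $\langle x(t) - z,\, A_{\lambda(t)}(x(t))\rangle \ge \lambda(t)\,\|A_{\lambda(t)}(x(t))\|^2$.

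For this I would invoke the properties of the Yosida regularization recalled in Appendix \ref{SS:Yosida}. The key facts are: $A_\lambda(x) \in A(J_\lambda x)$ where $J_\lambda = (I + \lambda A)^{-1}$ is the resolvent, and $x - J_\lambda x = \lambda A_\lambda(x)$. Since $z \in S = A^{-1}(0)$, monotonicity of $A$ applied to the pair $(J_\lambda x, A_\lambda(x))$ and $(z, 0)$ gives $\langle J_\lambda x - z,\, A_\lambda(x)\rangle \ge 0$. Then write $x - z = (x - J_\lambda x) + (J_\lambda x - z) = \lambda A_\lambda(x) + (J_\lambda x - z)$, so that
\begin{equation*}
\langle x - z,\, A_\lambda(x)\rangle = \lambda\|A_\lambda(x)\|^2 + \langle J_\lambda x - z,\, A_\lambda(x)\rangle \ge \lambda\|A_\lambda(x)\|^2.
\end{equation*}
Applying this pointwise with $\lambda = \lambda(t)$ and $x = x(t)$ yields exactly the bound needed, and the lemma follows.

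There is no real obstacle here; the only point requiring a little care is that one must use the monotonicity inequality in the form involving the resolvent (equivalently, the $\lambda$-cocoercivity of $A_\lambda$ specialized to a zero of $A$), rather than trying to apply monotonicity of $A$ directly at the point $x(t)$, which need not lie in the domain of $A$. Everything else is a routine differentiation and substitution.
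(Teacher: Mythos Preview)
Your proof is correct and follows essentially the same route as the paper: differentiate $h_z$, substitute the equation, and use the inequality $\langle x(t)-z,\,A_{\lambda(t)}(x(t))\rangle \ge \lambda(t)\|A_{\lambda(t)}(x(t))\|^2$. The only cosmetic difference is that the paper invokes this last inequality as the $\lambda(t)$-cocoercivity of $A_{\lambda(t)}$ together with $A_{\lambda(t)}(z)=0$, whereas you derive it directly from monotonicity of $A$ and the resolvent identity; these are the same argument unpacked.
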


\begin{proof}
First observe that
$$\dot{h}_z(t) = \langle x(t) - z , \dot{x}(t)  \rangle\qquad\hbox{and}\qquad 
\ddot{h}_z(t) = \langle x(t) - z , \ddot{x}(t)  \rangle + \| \dot{x}(t) \|^2.$$
By \eqref{E:system_bis}, it ensues that
\begin{equation} \label{wconv3}
\ddot{h}_z(t) + \frac{\alpha}{t} \dot{h}_z(t) +
\langle A_{\lambda(t)}(x(t)), x(t) - z  \rangle = \| \dot{x}(t) \|^2 .
\end{equation}
Since $z\in S = A^{-1} (0)$, we have  $A_{\lambda(t)}(z)=0$, and the $\lambda(t)$-cocoercivity of $A_{\lambda(t)}$ gives
\begin{equation} \label{wconv4}
\langle A_{\lambda(t)}(x(t)), x(t) - z  \rangle \geq \lambda (t)  \| A_{\lambda(t)}(x(t)) \|^2 .
\end{equation}
It suffices to combine (\ref{wconv3}) and (\ref{wconv4}) to conclude.
\end{proof}
As suggested in \ref{SS:cocoercivity}, we are likely to need a growth assumption on $\lambda (t)$ $-$related to $\lambda(t) \times \left(\frac{\alpha}{t}\right)^2>1$$-$ in order to go further. Whence, the following result:

\begin{lemma} \label{L:h_z-2}
Given $z\in S= A^{-1} (0)$, define $h_z$ by (\ref{E:h_z}) and let 
\begin{equation} \label{E:basic-assumption}
\lambda (t) \frac{\alpha^2}{t^2} \geq 1+ \epsilon
\end{equation} 
for some $\epsilon >0$. Then, for each $z\in S= A^{-1} (0)$ and all $t \geq t_0$, we have
\begin{equation} \label{E:ineq_h_z-2}
\ddot{h}_z(t) + \frac{\alpha}{t} \dot{h}_z(t) + \epsilon \|\dot{x}(t)\|^2   +  \frac{\alpha\,\lambda (t)}{t} \frac{d}{dt} \|\dot{x}(t)\|^2 + \lambda(t) \|\ddot{x}(t)\|^2 \leq 0.
\end{equation} 
\end{lemma}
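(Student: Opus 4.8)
The plan is to feed the differential equation (\ref{E:system_bis}) back into the conclusion of Lemma \ref{L:h_z} in order to trade the crude bound $\|\dot x(t)\|^2$ on the right-hand side for a negative term, at the cost of producing the extra curvature terms $\lambda(t)\|\ddot x(t)\|^2$ and $\frac{\alpha\lambda(t)}{t}\frac{d}{dt}\|\dot x(t)\|^2$. The starting point is inequality (\ref{E:ineq_h_z}), namely
$$\ddot{h}_z(t) + \frac{\alpha}{t} \dot{h}_z(t) + \lambda (t) \| A_{\lambda(t)}(x(t))  \|^2 \leq \|\dot{x}(t)\|^2,$$
which holds for every $z\in S$ by Lemma \ref{L:h_z}.

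First I would use (\ref{E:system_bis}) to write $A_{\lambda(t)}(x(t)) = -\ddot x(t) - \frac{\alpha}{t}\dot x(t)$, and expand the square:
$$\| A_{\lambda(t)}(x(t))  \|^2 = \|\ddot x(t)\|^2 + \frac{2\alpha}{t}\langle \ddot x(t),\dot x(t)\rangle + \frac{\alpha^2}{t^2}\|\dot x(t)\|^2.$$
Then I would observe the elementary identity $\frac{d}{dt}\|\dot x(t)\|^2 = 2\langle \ddot x(t),\dot x(t)\rangle$, so that the middle term equals $\frac{\alpha}{t}\frac{d}{dt}\|\dot x(t)\|^2$. Multiplying through by $\lambda(t)\ge 0$ and substituting into (\ref{E:ineq_h_z}) gives
$$\ddot{h}_z(t) + \frac{\alpha}{t} \dot{h}_z(t) + \lambda(t)\|\ddot x(t)\|^2 + \frac{\alpha\lambda(t)}{t}\frac{d}{dt}\|\dot x(t)\|^2 + \left(\frac{\alpha^2\lambda(t)}{t^2} - 1\right)\|\dot x(t)\|^2 \leq 0.$$

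Finally I would invoke the growth assumption (\ref{E:basic-assumption}), which says precisely that $\frac{\alpha^2\lambda(t)}{t^2} - 1 \ge \epsilon$; since $\|\dot x(t)\|^2\ge 0$, the coefficient can be replaced by $\epsilon$ while preserving the inequality, which yields exactly (\ref{E:ineq_h_z-2}). There is really no serious obstacle here: the argument is a direct substitution plus the identity for $\frac{d}{dt}\|\dot x\|^2$, and the only point requiring the hypothesis is the very last step, where the sign of $\frac{\alpha^2\lambda(t)}{t^2}-1$ must be controlled from below — which is the entire reason for imposing $\lambda(t)\frac{\alpha^2}{t^2}\ge 1+\epsilon$ rather than merely $\lambda(t)\frac{\alpha^2}{t^2}>1$, as anticipated in Subsection \ref{SS:cocoercivity}.
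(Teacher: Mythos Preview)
Your proof is correct and follows essentially the same route as the paper: substitute $A_{\lambda(t)}(x(t))=-\ddot x(t)-\frac{\alpha}{t}\dot x(t)$ from (\ref{E:system_bis}) into (\ref{E:ineq_h_z}), expand the square using $\frac{d}{dt}\|\dot x(t)\|^2=2\langle\ddot x(t),\dot x(t)\rangle$, and then invoke (\ref{E:basic-assumption}) to replace the coefficient $\lambda(t)\frac{\alpha^2}{t^2}-1$ by $\epsilon$.
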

\begin{proof} 
By (\ref{E:system_bis}), we have 
$$
A_{\lambda(t)}(x(t)) = -\ddot{x}(t)  - \frac{\alpha}{t}  \dot{x}(t). 
$$
Replacing $A_{\lambda(t)}(x(t))$ by this expression in
(\ref{E:ineq_h_z}), we obtain 
\begin{equation} \label{E:ineq_h_z-100}
\ddot{h}_z(t) + \frac{\alpha}{t} \dot{h}_z(t) + \lambda (t) \left\| \ddot{x}(t)  + \frac{\alpha}{t}  \dot{x}(t)   \right\|^2 \leq \|\dot{x}(t)\|^2.
\end{equation} 
After expanding the third term on the left-hand side of this expression, we obtain 
$$\ddot{h}_z(t) + \frac{\alpha}{t} \dot{h}_z(t) + \left(\lambda (t) \frac{\alpha^2}{t^2} -1   \right) \|\dot{x}(t)\|^2   + \alpha \frac{\lambda (t)}{t} \frac{d}{dt} \|\dot{x}(t)\|^2 + \lambda(t) \|\ddot{x}(t)\|^2 \leq 0.
$$
The result follows from (\ref{E:basic-assumption}).
\end{proof}

\subsection{Speed and acceleration decay}

We now focus on the long-term behavior of the speed and acceleration. We have the following:

\begin{proposition} \label{P:system_properties}
Let $A: \mathcal H \rightarrow 2^{\mathcal H}$ be  a maximally monotone operator such that  $S= A^{-1} (0)\neq \emptyset$. Let $x:[t_0,+\infty[\to\mathcal H$ be a solution of (\ref{E:system_bis}), where 
$$\alpha>2\quad \hbox{and}\quad\lambda (t) = (1+\epsilon) \frac{t^2}{\alpha^2} \quad \hbox{for some}\quad \epsilon > \frac{2}{\alpha -2}.$$ 
Then, the trajectory $x(\cdot)$ is bounded, $\|\dot x(t)\|=\mathcal O(1/t)$, $\|\ddot x(t)\|=\mathcal O(1/t^2)$ and
$$\int_{t_0}^{+\infty} t\|\dot x(t)\|^2\,dt<+\infty.$$
\end{proposition}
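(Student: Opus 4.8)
The plan is a Lyapunov analysis anchored at a solution $z\in S$, built on the differential inequality of Lemma~\ref{L:h_z-2} specialized to $\lambda(t)=(1+\epsilon)\,t^2/\alpha^2$. Write $h=h_z$ and $g(t)=\|\dot x(t)\|^2$. Since this choice makes $\lambda(t)\,\alpha^2/t^2=1+\epsilon$, Lemma~\ref{L:h_z-2} reads
\[
\ddot h(t)+\frac{\alpha}{t}\dot h(t)+\epsilon\, g(t)+\frac{1+\epsilon}{\alpha}\,t\,\dot g(t)+\frac{1+\epsilon}{\alpha^2}\,t^2\|\ddot x(t)\|^2\le 0 .
\]
Multiplying by $t$, I would use the identities $t\ddot h+\alpha\dot h=\frac{d}{dt}\big(t\dot h+(\alpha-1)h\big)$ and $t^2\dot g=\frac{d}{dt}(t^2g)-2tg$ to recognize the left-hand side as the derivative of
\[
W(t):=t\,\dot h(t)+(\alpha-1)\,h(t)+\frac{1+\epsilon}{\alpha}\,t^2 g(t)
\]
up to lower-order terms; concretely one gets
\[
\dot W(t)\le -\Big(\epsilon-\tfrac{2(1+\epsilon)}{\alpha}\Big)\,t\,g(t)-\frac{1+\epsilon}{\alpha^2}\,t^3\|\ddot x(t)\|^2 .
\]
The hypothesis $\epsilon>\tfrac{2}{\alpha-2}$ is exactly what makes $c_1:=\epsilon-\tfrac{2(1+\epsilon)}{\alpha}=\tfrac{\epsilon(\alpha-2)-2}{\alpha}>0$, so $W$ is nonincreasing on $[t_0,+\infty[$.

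Next I would bound $W$ from below. Since $\dot h(t)=\langle x(t)-z,\dot x(t)\rangle$, Young's inequality with a weight chosen to cancel half of the term $\frac{1+\epsilon}{\alpha}t^2g(t)$ gives
\[
W(t)\ge \tfrac12\Big(\alpha-1-\tfrac{\alpha}{1+\epsilon}\Big)\|x(t)-z\|^2+\frac{1+\epsilon}{2\alpha}\,t^2\|\dot x(t)\|^2 ,
\]
and $\alpha-1-\tfrac{\alpha}{1+\epsilon}>0$ because $\epsilon>\tfrac{2}{\alpha-2}\ge\tfrac{1}{\alpha-1}$ (the last inequality amounting to $\alpha\ge0$). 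In particular $W\ge 0$. Combining $0\le W(t)\le W(t_0)$ with the lower bound yields at once that $\|x(t)-z\|$ is bounded $-$ hence the trajectory is bounded $-$ and that $t^2\|\dot x(t)\|^2$ is bounded, i.e. $\|\dot x(t)\|=\mathcal O(1/t)$. Integrating $\dot W\le -c_1\,t g$ and using $W\ge0$ gives $c_1\int_{t_0}^{+\infty}t\|\dot x(t)\|^2\,dt\le W(t_0)<+\infty$, which is the stated integral bound (and, as a byproduct, $\int_{t_0}^{+\infty}t^3\|\ddot x(t)\|^2\,dt<+\infty$).

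It remains to estimate the acceleration, which I would read off directly from the equation $\ddot x(t)=-\tfrac{\alpha}{t}\dot x(t)-A_{\lambda(t)}(x(t))$. The first term is $\mathcal O(1/t^2)$ by the previous step. For the second, $A_{\lambda(t)}(z)=0$ and the $\tfrac{1}{\lambda(t)}$-Lipschitz continuity of the Yosida approximation (Appendix~\ref{SS:Yosida}) give $\|A_{\lambda(t)}(x(t))\|\le \tfrac{1}{\lambda(t)}\|x(t)-z\|=\tfrac{\alpha^2}{(1+\epsilon)t^2}\|x(t)-z\|$, which is $\mathcal O(1/t^2)$ by boundedness of the trajectory. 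Adding the two estimates gives $\|\ddot x(t)\|=\mathcal O(1/t^2)$.

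The only genuinely delicate point is the sign-indefinite cross term $t\langle x(t)-z,\dot x(t)\rangle$ in $W$: it must be absorbed simultaneously into the dissipation term, which forces $c_1>0$, and $-$ after Young's inequality $-$ into the kinetic term $t^2\|\dot x(t)\|^2$ without killing coercivity in $\|x(t)-z\|^2$, which forces $\alpha-1-\tfrac{\alpha}{1+\epsilon}>0$. Reconciling these two constraints is precisely what pins down the threshold $\epsilon>\tfrac{2}{\alpha-2}$; the remaining manipulations are routine integrations by parts and Cauchy--Schwarz estimates.
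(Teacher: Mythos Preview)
Your proof is correct and follows the same overall strategy as the paper (multiply Lemma~\ref{L:h_z-2} by $t$ and integrate), but you handle the boundedness step differently. The paper, after obtaining the integrated inequality $t\dot h+(\alpha-1)h+\beta t^2 g+(\epsilon-2\beta)\int_{t_0}^t sg\,ds\le C$ (which is your $W(t)+c_1\int_{t_0}^t sg\,ds\le W(t_0)$), first drops the kinetic and integral terms, multiplies $t\dot h+(\alpha-1)h\le C$ by the integrating factor $t^{\alpha-2}$, and integrates to conclude that $h$ is bounded; then it uses $|\dot h|\le(M+\|z\|)\|\dot x\|$ to get a quadratic inequality in $t\|\dot x\|$, whence $\|\dot x\|=\mathcal O(1/t)$. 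You instead absorb the cross term $t\langle x-z,\dot x\rangle$ by Young's inequality to show $W$ dominates a positive quadratic form in $\|x-z\|$ and $t\|\dot x\|$, so that $0\le W(t)\le W(t_0)$ gives both bounds in one stroke. Your route is slightly more streamlined and yields the bonus estimate $\int_{t_0}^{+\infty}t^3\|\ddot x\|^2\,dt<+\infty$; the paper's route avoids the coercivity check (the verification that $\alpha-1-\tfrac{\alpha}{1+\epsilon}>0$). The treatment of $\|\ddot x\|$ via the equation and the $1/\lambda(t)$-Lipschitz property of $A_{\lambda(t)}$ is identical.
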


\begin{proof}
As before, take $z\in S= A^{-1} (0)$ and define $h_z$ by (\ref{E:h_z}). First we simplify the writing of equation (\ref{E:ineq_h_z-2}), given in Lemma \ref{L:h_z-2}. By setting $g(t) = \|\dot{x}(t)\|^2$, and $\beta = \frac{1 + \epsilon}{\alpha}$, we  have 
\begin{equation} \label{E:ineq_h_z-4}
\ddot{h}_z(t) + \frac{\alpha}{t} \dot{h}_z(t) + \epsilon g(t)   + \beta t \dot{g}(t)+ \lambda(t) \|\ddot{x}(t)\|^2 \leq 0.
\end{equation} 
Neglecting the positive term $\lambda(t) \|\ddot{x}(t)\|^2$ and multiplying by $t$, we obtain
\begin{equation} \label{E:ineq_h_z-6}
t\ddot{h}_z(t) + \alpha \dot{h}_z(t) + \epsilon t g(t)   + \beta t^2 \dot{g}(t) \leq 0.
\end{equation}
Integrate this inequality from $t_0$ to $t$. We have
\begin{align*}
\int_{t_0}^t \left( s\ddot{h}_z(s) + \alpha \dot{h}_z(s)\right) ds &= t\dot{h}_z(t)-t_0\dot{h}_z(t_0) -\int_{t_0}^t \dot{h}_z(s)ds + \alpha h_z(t) -  \alpha h_z(t_0) \\
&= t\dot{h}_z(t) +(\alpha -1)h_z(t) -t_0\dot{h}_z(t_0)-(\alpha -1)h_z(t_0) 
\end{align*}
\begin{align*}
\int_{t_0}^t \left( \epsilon s g(s)   + \beta s^2 \dot{g}(s) \right) ds &= \beta t^2 g(t) - \beta {t_0}^2 g(t_0) - 2 \beta \int_{t_0}^t s g(s) ds + \epsilon \int_{t_0}^t s g(s) ds\\
&= \beta t^2 g(t) - \beta {t_0}^2 g(t_0)  + (\epsilon - 2 \beta)\int_{t_0}^t s g(s) ds .
\end{align*}
Adding the two above expressions, we obtain
\begin{equation} \label{E:ineq_h_z-7}
t\dot{h}_z(t) +(\alpha -1)h_z(t) + \beta t^2 g(t) + (\epsilon - 2 \beta)\int_{t_0}^t s g(s) ds \leq C
\end{equation}
for some positive constant $C$ that depends only on the initial data.
By the definition of $\beta= \frac{1 + \epsilon}{\alpha}$, we have $\epsilon -2 \beta = \frac{\alpha -2}{\alpha}\left(\epsilon -  \frac{2}{\alpha -2} \right)$.
Since $\alpha >2$ and $\epsilon > \frac{2}{\alpha -2}$, we observe that $\epsilon - 2 \beta >0$. Hence, (\ref{E:ineq_h_z-7}) gives
\begin{equation} \label{E:ineq_h_z-8}
t\dot{h}_z(t) +(\alpha -1)h_z(t)  \leq C.
\end{equation}
Multiply this expression by $t^{\alpha -2}$ to obtain
$$
\frac{d}{dt} t^{\alpha -1}h(t) \leq Ct^{\alpha -2}.
$$
Integrating from $t_0$ to $t$, we obtain
$$
h_z(t) \leq \frac{C}{\alpha -1} + \frac{D}{t^{\alpha -1}},
$$
for some other positive constant $D$. 
As a consequence, $h_z(\cdot)$ is bounded, and so is the trajectory $x(\cdot)$. Set 
$$
M := \sup_{t\geq t_0} \|x(t)\| < + \infty,
$$
and note that
\begin{equation} \label{E:ineq_h_z-9}
|\dot{h}_z(t)| =  |\langle x(t) - z , \dot{x}(t)  \rangle|\leq \|x(t) - z\| \| \dot{x}(t) \| \leq ( M + \|z\|)\|\dot{x}(t)\|. 
\end{equation}
Combining (\ref{E:ineq_h_z-7}) with (\ref{E:ineq_h_z-9}) we deduce that
$$
\beta \big(t\|\dot{x}(t)\|\big)^2 \leq C + ( M + \|z\|)\,\big(t\|\dot{x}(t)\|\big). 
$$
This immediately implies that
\begin{equation} \label{E:ineq_h_z-10}
\sup_{t\geq t_0} t\|\dot{x}(t)\| < + \infty.
\end{equation}
This gives 
\begin{equation} \label{E:dotx_1/t}
\|\dot x(t)\|=\mathcal O(1/t).
\end{equation}	
From \eqref{E:system_bis}, we have 
\begin{equation} \label{E:system-cor}
\|\ddot{x}(t) \|  \leq  \frac{\alpha}{t}  \|\dot{x}(t)    \| + \| A_{\lambda(t)}(x(t))\| \leq \frac{\alpha}{t} \|\dot{x}(t)    \| + \frac{M+\|z\|}{\lambda (t)}. 
\end{equation}
Using (\ref{E:dotx_1/t}) and the definition of $\lambda(t)$, we conclude that
\begin{equation} \label{E:ddotx_1/t2}
\|\ddot x(t)\|=\mathcal O(1/t^2).
\end{equation}		
Finally, returning to (\ref{E:ineq_h_z-7}), and using (\ref{E:ineq_h_z-9}) and (\ref{E:ineq_h_z-10}) we infer that
\begin{equation} \label{E:ineq_h_z-11}
\int_{t_0}^{+ \infty} t \|\dot{x}(t)\|^2 dt < + \infty,
\end{equation}
which completes the proof.
\end{proof}

\subsection{Proof of Theorem \ref{T:weak_convergence}}
We are now in a position to prove the convergence of the trajectories to equilibria. According to Opial's Lemma \ref{Opial}, it suffices to verify that $\lim_{t\to + \infty}\|x(t)-z\|$ exists for each $z\in S$, and that every weak limit point of $x(t)$, as $t\to+\infty$, belongs to $S$.

\smallskip

We begin by proving that $\lim_{t\to\infty}\|x(t)-z\|$ exists for every $z\in S$. By Lemma \ref{L:h_z}, we have
\begin{equation} \label{E:ineq_h_z-13}
t\ddot{h}_z(t) + \alpha \dot{h}_z(t) + t\lambda (t) \| A_{\lambda(t)}(x(t))  \|^2 \leq t\|\dot{x}(t)\|^2.
\end{equation} 
Since $ h_z$ is nonnegative, and in view of (\ref{E:ineq_h_z-11}), 
Lemma \ref{basic-edo} shows that
$$
\lim_{t\to + \infty} h_z (t)
$$ 
exists. It follows that $\lim_{t\to\infty}\|x(t)-z\|$ exists for every $z\in S$.

\smallskip

From Lemma \ref{basic-edo} and (\ref{E:ineq_h_z-13}), we also have
\begin{equation} \label{E:ineq_h_z-14}
\int_{t_0}^{+ \infty} t \lambda (t) \| A_{\lambda(t)}(x(t))  \|^2  dt < + \infty .
\end{equation}
Since $\lambda (t) = ct^2$ for some positive constant $c$ we infer
\begin{equation} \label{E:ineq_h_z-15}
\int_{t_0}^{+ \infty}  \|\lambda(t) A_{\lambda(t)}(x(t))  \|^2  \frac{1}{t}dt < + \infty .
\end{equation}
The central point of the proof is to show that this property implies
\begin{equation} \label{E:ineq_h_z-16}
\lim_{t\to + \infty}  \|\lambda(t) A_{\lambda(t)}(x(t))  \|=0.
\end{equation}
Suppose, for a moment, that this property holds. Then the end of the proof follows easily:  let  $t_n \to + \infty$ such that 
$x(t_n) \to \bar{x}$ weakly. We have $\lambda(t_n) A_{\lambda(t_n)}(x(t_n)) \to 0$ strongly. Since $\lambda(t_n)\to + \infty$, we also have  $ A_{\lambda(t_n)}(x(t_n)) \to 0$ strongly.
Passing to the limit in
$$
A_{\lambda(t_n)}(x(t_n)) \in A (x(t_n) - \lambda(t_n) A_{\lambda(t_n)}(x(t_n)))
$$
and using the demi-closedness of the graph of $A$, we obtain 
$$
0 \in A (\bar{x}).
$$
In other words, $\bar{x} \in S$, and we conclude by Opial's Lemma.

\smallskip

As a consequence, it suffices to prove (\ref{E:ineq_h_z-16}). To obtain this result, we shall estimate the variation of the function $t \mapsto \lambda(t) A_{\lambda(t)} $. By applying Lemma \ref{L:basic-variation} with $\gamma= \lambda(t)$, $\delta= \lambda (s)$, $x= x(t)$ and $y = y(s)$ with $s, t \geq t_0$, we obtain 
\begin{equation} \label{E:basic-var2}
\|\lambda(t) A_{\lambda(t)}x(t) - \lambda(s) A_{\lambda (s)}x(s)\| \leq 2 \| x(t)-x(s) \| + 2 \| x(t)-z \|\frac{|\lambda(t) - \lambda (s) |}{\lambda(t)}
\end{equation} 
for each fixed $z\in S$. Dividing by $t-s$ with $t\neq s$, and letting $s$ tend to $t$, we deduce that
$$
\left\|\frac{d}{dt}\big(\lambda(t) A_{\lambda(t)}x(t)\big)\right\| \leq 2 \|\dot{x}(t)\| +  2 \| x(t)-z \|\frac{|\dot{\lambda}(t) |}{\lambda(t)}
$$
for almost every $t>t_0$. According to Proposition \ref{P:system_properties}, the trajectory $x(\cdot)$ is bounded by some $M\ge 0$. In turn, 
$$\|\dot{x}(t)\| \leq \frac{C}{t},$$
for some $C\ge 0$ and all $t\ge t_0$, by \eqref{E:ineq_h_z-10}. Finally, the definition of $\lambda(t)$ implies 
$$\frac{|\dot \lambda(t)|}{\lambda(t)}=\frac{2}{t}$$
for all $t\ge t_0$. As a consequence, 
$$
\left\|\frac{d}{dt}(\lambda(t) A_{\lambda(t)}x(t))\right\| \leq \frac{2C+4(M+\|z\|)}{t}.
$$ 
%
This property, along with the boundedness of $\lambda(t) A_{\lambda(t)}x(t)$, and estimation \eqref{E:ineq_h_z-15}, together imply that the nonnegative function  $w(t):= \| \lambda(t) A_{\lambda(t)}x(t)\|^2$ satisfies 

$$\left|\frac{d}{dt} w(t)\right| \leq \eta(t)$$
for almost every $t>t_0$, and
$$\int_{t_0}^{+ \infty}  w(t)\,\eta(t)\,dt < + \infty,$$
where
$$\eta(t)=\frac{2C+4(M+\|z\|)}{t}.$$
Noting that $\eta\notin L^1 (t_0, +\infty)$, we  conclude thanks to Lemma \ref{L:basic-integration}.

\section{Proximal-based inertial algorithms with regularized operator}\label{S:FIPA}

In this section, we introduce the inertial proximal algorithm which results from the discretization with respect to time of the continuous system
\begin{equation} \label{E:prox1}
\ddot x(t)+\frac{\alpha}{t} \dot{x}(t)+A_{\lambda(t)}(x(t))=0,
\end{equation}
where $A_{\lambda(t)}$ is the Yosida approximation of index $\lambda(t)$  of the maximally monotone operator $A$. Further insight into the relationship between continuous- and discrete-time systems in variational analysis can be found in \cite{Pey_Sor}.

\subsection{A Regularized Inertial Proximal Algorithm}

We shall obtain an implementable algorithm by means of an {\em implicit} discretization of \eqref{E:prox1} with respect to time. Note that, in view of the Lipschitz continuity property of the operator $A_{\lambda}$, the explicit discretization might work well too.  We choose to discretize it implicitely for two reasons: implicit discretizations tend to follow the continuous-time trajectories more closely; and the explicit discretization has the same iteration complexity (they each need one resolvent computation per iteration). Taking a fixed time step $h>0$, and setting $t_k= kh$, $x_k = x(t_k)$, $\lambda_k = \lambda (t_k)$, an implicit finite-difference scheme for \eqref{E:prox1} with centered second-order variation gives
\begin{equation}\label{algo3}
\frac{1}{h^2}(x_{k+1} -2 x_{k}   + x_{k-1} ) +\frac{\alpha}{kh^2}( x_{k}  - x_{k-1})  + A_{\lambda_k} (x_{k+1})  =0.
\end{equation}
After expanding \eqref{algo3}, we obtain 
\begin{equation}\label{algo4}
x_{k+1} + h^2 A_{\lambda_k} (x_{k+1}) =  x_{k} + \left( 1- \frac{\alpha}{k}\right) ( x_{k}  - x_{k-1}).
\end{equation}
Setting $s=h^2$, we have
\begin{equation}\label{algo4b}
x_{k+1} = \left(I + s A_{\lambda_k}\right)^{-1}  \left( x_{k} + \left( 1- \frac{\alpha}{k}\right) ( x_{k}  - x_{k-1})\right),
\end{equation}
where $\left(I + s A_{\lambda_k}\right)^{-1} $ is the resolvent of index $s>0$ of the maximally monotone operator $A_{\lambda_k}$.
This gives the following  algorithm 
\begin{equation}\label{algo7}
 \left\{
\begin{array}{rcl}
y_k & =  & x_{k} + \left( 1- \frac{\alpha}{k}\right) ( x_{k}  - x_{k-1}) \\
\rule{0pt}{20pt}
 x_{k+1} & = & \left(I + s A_{\lambda_k}\right)^{-1}  \left( y_k \right). 
 \end{array}\right.
\end{equation}
Using equality (\ref{E:Y4}):
$$
\left( A_{\lambda}\right)_s = A_{\lambda + s},
$$
we can reformulate this last equation as 
$$
\left(I + s A_{\lambda}\right)^{-1}  = \frac{\lambda}{\lambda +s}I + \frac{s}{\lambda +s}\left(I + (\lambda +s) A\right)^{-1}.
$$
Hence, \eqref{algo7} can be rewritten as
$$
\mbox{\rm (RIPA)} \qquad \left\{
\begin{array}{rcl}
y_k&= &  \displaystyle{x_{k} + \left( 1- \frac{\alpha}{k}\right)( x_{k}  - x_{k-1})} \\
\rule{0pt}{20pt}
 x_{k+1} & = & \displaystyle{\frac{\lambda_k}{\lambda_k +s}y_k + \frac{s}{\lambda_k +s}\left(I + (\lambda_k +s) A\right)^{-1} \left( y_k \right)}, 
 \end{array}\right.
$$
where (RIPA) stands for the Regularized  Inertial Proximal Algorithm. 
Let us  reformulate (RIPA) in a compact way. Observe that
\begin{align*}
x_{k+1}  &=  \frac{\lambda_k}{\lambda_k +s}y_k + \frac{s}{\lambda_k +s}\left(I + (\lambda_k +s) A\right)^{-1} \left( y_k \right)\\
 &=  \left(1- \frac{s}{\lambda_k +s}  \right)y_k + \frac{s}{\lambda_k +s}\left(I + (\lambda_k +s) A\right)^{-1} \left( y_k \right)\\
&= y_k -s   \frac{1}{\lambda_k +s} \left( y_k - \left(I + (\lambda_k +s) A\right)^{-1} (y_k) \right).
\end{align*}
By the definition of $A_{\lambda_k + s}$, this is
\begin{equation}\label{algo11}
x_{k+1} = y_k -s A_{\lambda_k + s}\left( y_k \right).
\end{equation}
Thus, setting  $\alpha_k = 1- \frac{\alpha}{k}$, (RIPA) is just
\begin{equation} \label{E:RIPA}
 \left\{
\begin{array}{rcl}
y_k&= &  \displaystyle{x_{k} + \alpha_k( x_{k}  - x_{k-1})} \\
\rule{0pt}{20pt}
 x_{k+1} &=& y_k -s A_{\lambda_k + s}\left( y_k \right),
 \end{array}\right.
\end{equation} 


\begin{remark}
Letting $\lambda_k \to 0$ in (\ref{E:RIPA}), we obtain the classical form of the inertial proximal algorithm 
\begin{equation} \label{E:RIPA_unreg}
\left\{
\begin{array}{rcl}
y_k&= & x_{k} + \alpha_k ( x_{k}  - x_{k-1}) \\
\rule{0pt}{20pt}
 x_{k+1} & = & (I + sA)^{-1} (y_k). 
 \end{array}\right.
\end{equation}
The case $0\leq \alpha_k \leq \bar{\alpha}<1$ has been considered by \'Alvarez-Attouch in \cite{AA1}. The case $\alpha_k \to 1$, which is the most interesting for obtening fast methods (in the line of Nesterov's accelerated methods), and the one that we are concerned with, was recently studied by Attouch-Cabot \cite{AC1,AC2}. In these two papers, the convergence is obtained under the restrictive assumption
$$
\sum_k \alpha_k \|x_{k+1} - x_{k}  \|^2 <+ \infty.
$$
By contrast, our approach, which supposes that $\lambda_k \to +\infty$, provides convergence of the trajectories, without any restrictive assumption on the trajectories. Let us give a geometrical interpretation of (RIPA). As a classical property of the resolvents (\cite[Theorem 23.44]{BC}), for any $x\in \cH$,  $J_{\lambda}x \to \mbox{\rm proj}_S (x)$ as $\lambda \to + \infty$. Thus the algorithm writes
$$
x_{k+1} = \theta_k y_k + (1-\theta_k)
J_{\lambda_k +s  }\left( y_k\right) 
$$
with $\lambda_k \sim + \infty$,   $\theta_k = \frac{\lambda_k}{\lambda_k +s} \sim 1$, and 
$J_{\lambda_k +s  }\left( y_k\right)  \sim \mbox{\rm proj}_S (y_k)$. This is illustrated in the following picture.

\setlength{\unitlength}{6cm}
\begin{picture}(0.5,0.7)(-0.65,0.03)
\put(0.357,0.49){$y_k =  x_{k} + \left(1- \frac{\alpha}{k} \right)( x_{k}  - x_{k-1})$}

\put(0.27,0.4){$x_{k+1} = \theta_k y_k + (1-\theta_k)
J_{\lambda_k +s  }\left( y_k\right) \sim \theta_k y_k +
(1-\theta_k) \mbox{\rm proj}_S (y_k)$}

\put(0.325,0.49){{\tiny $\bullet$}}
\put(0.388,0.58){$x_k$}
\put(0.35,0.57){{\tiny $\bullet$}}
\put(0.398,0.653){$x_{k-1}$}
\put(0.37,0.657){{\tiny $\bullet$}}
\put(0.268,0.451){{\tiny $\bullet $}}
\put(-0.08,0.29){$S$}

\qbezier(-0.15,0.15)(0.4,0.37)(-0.15,0.45)

\put(-0.12,0.215){\line(1,1){0.16}}
\put(-0.02,0.22){\line(1,1){0.136}}
\put(-0.1,0.32){\line(1,1){0.092}}
\put(-0.12,0.39){\line(1,1){0.035}}

\put(0.341,0.504){\vector(-4,-3){0.06}}

\put(0.34,0.505){\line(1,4){0.042}}

\multiput(0.122,0.346)(0.02,0.015){8}
{\line(4,3){0.01}}

\end{picture}

$\mbox{ }$

\end{remark}

\begin{remark} 
As a main difference with (\ref{E:RIPA_unreg}), (RIPA) contains an additional momentum term $\frac{\lambda_k}{\lambda_k +s}y_k $, which enters the definition of $x_{k+1}$.
Although there is some similarity, this is different from the algorithm introduced by Kim and Fessler in \cite{Kim-F}, which also contains an additional momentum term, but which comes within the definition of $y_k$. It is important to mention that, in both cases, the introduction of this momentum term implies additional operations whose cost is negligible.  
\end{remark}

\subsection{Preliminary estimations}

Given $z\in\cH$ and $k\ge 1$, write
\begin{equation}\label{algo12}
h_{z,k} := \frac{1}{2} \|x_k - z\|^2.
\end{equation}
Since there will be no risk of confusion, we shall write $h_k$ for $h_{z,k}$ to simplify the notation. 
The following result is valid for an arbitrary sequence $ \alpha_k \geq 0$:
\begin{lemma}\label{$h_k$ inequality}
Let $\alpha_k \geq 0$. For any $z \in \cH$, the following holds for all $k \geq 1$:
\begin{equation}\label{algo13}
(h_{k+1} - h_k )- \alpha_k (h_{k} - h_{k-1} ) - 
\langle x_{k+1} - y_k , x_{k+1} - z \rangle + \frac{1}{2} \| x_{k+1} - y_k \|^2  = \frac{1}{2} (\alpha_k + {\alpha_k}^2 )\| x_{k} - x_{k-1} \|^2 .
\end{equation}
\end{lemma}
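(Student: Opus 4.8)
The plan is to expand everything in terms of the squared norms $h_k = \tfrac12\|x_k-z\|^2$ and reduce the claimed identity to a purely algebraic fact about the three points $x_{k-1}, x_k, z$ together with the definitions of $y_k$ and $h_{z,k}$. Note first that the identity to be proved does \emph{not} involve the operator $A$ at all: it only uses the first line $y_k = x_k + \alpha_k(x_k - x_{k-1})$ of \eqref{E:RIPA} and the algebraic meaning of $h_k$, $h_{k+1}$. So $x_{k+1}$ can be treated as an arbitrary point; the content is an elementary polarization computation.

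First I would record the cosine-rule / polarization identity
$$
h_{k+1} - h_k = \tfrac12\|x_{k+1}-z\|^2 - \tfrac12\|x_k-z\|^2 = \langle x_{k+1}-x_k,\; x_{k+1}-z\rangle - \tfrac12\|x_{k+1}-x_k\|^2,
$$
and, more importantly, the analogous expansion that isolates $y_k$:
$$
h_{k+1} - h_k = \langle x_{k+1}-y_k,\; x_{k+1}-z\rangle - \tfrac12\|x_{k+1}-y_k\|^2 + \big(h_{k+1}^{(y)} \text{-type terms}\big),
$$
more precisely one writes $x_{k+1}-z = (x_{k+1}-y_k) + (y_k - z)$ and expands $\tfrac12\|x_{k+1}-z\|^2$, so that
$$
h_{k+1} = \tfrac12\|x_{k+1}-y_k\|^2 + \langle x_{k+1}-y_k,\, y_k - z\rangle + \tfrac12\|y_k - z\|^2.
$$
Rearranging gives exactly $(h_{k+1} - h_k) - \langle x_{k+1}-y_k, x_{k+1}-z\rangle + \tfrac12\|x_{k+1}-y_k\|^2 = \tfrac12\|y_k-z\|^2 - h_k$, using $\langle x_{k+1}-y_k, x_{k+1}-z\rangle = \|x_{k+1}-y_k\|^2 + \langle x_{k+1}-y_k, y_k-z\rangle$. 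Hence the whole left-hand side of \eqref{algo13} collapses to
$$
\tfrac12\|y_k - z\|^2 - h_k - \alpha_k(h_k - h_{k-1}).
$$

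The remaining task is to show $\tfrac12\|y_k-z\|^2 - h_k - \alpha_k(h_k - h_{k-1}) = \tfrac12(\alpha_k + \alpha_k^2)\|x_k - x_{k-1}\|^2$. Here I substitute $y_k - z = (x_k - z) + \alpha_k(x_k - x_{k-1})$ and expand:
$$
\tfrac12\|y_k-z\|^2 = h_k + \alpha_k\langle x_k - z,\, x_k - x_{k-1}\rangle + \tfrac12\alpha_k^2\|x_k-x_{k-1}\|^2.
$$
So $\tfrac12\|y_k-z\|^2 - h_k = \alpha_k\langle x_k-z, x_k-x_{k-1}\rangle + \tfrac12\alpha_k^2\|x_k-x_{k-1}\|^2$. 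It then remains to check that $\langle x_k - z, x_k - x_{k-1}\rangle - (h_k - h_{k-1}) = \tfrac12\|x_k - x_{k-1}\|^2$, which is again the polarization identity $h_k - h_{k-1} = \langle x_k - x_{k-1}, x_k - z\rangle - \tfrac12\|x_k-x_{k-1}\|^2$ applied with the roles shifted by one index. Combining the two displays yields $\alpha_k\cdot\tfrac12\|x_k-x_{k-1}\|^2 + \tfrac12\alpha_k^2\|x_k-x_{k-1}\|^2 = \tfrac12(\alpha_k+\alpha_k^2)\|x_k-x_{k-1}\|^2$, which is precisely the right-hand side of \eqref{algo13}.

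There is essentially no hard step here; it is bookkeeping with the three-point identity $\tfrac12\|a-c\|^2 - \tfrac12\|b-c\|^2 = \langle a-b, a-c\rangle - \tfrac12\|a-b\|^2$ used twice (once with $(a,b,c) = (x_{k+1}, y_k, z)$, once with $(a,b,c) = (x_k, x_{k-1}, z)$). The only point requiring a little care is keeping the signs straight in the term $-\alpha_k(h_k - h_{k-1})$ and making sure the definition $\alpha_k = 1 - \alpha/k$ plays no role (indeed it does not — the lemma is stated for arbitrary $\alpha_k \ge 0$, and in fact the proof works for any real $\alpha_k$). I would present the argument as: (i) expand the left-hand side using polarization around $y_k$; (ii) substitute $y_k = x_k + \alpha_k(x_k - x_{k-1})$; (iii) absorb the cross term via a second polarization identity; (iv) collect terms.
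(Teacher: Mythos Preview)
Your proof is correct and follows essentially the same route as the paper's: both arguments reduce the left-hand side of \eqref{algo13} to $\tfrac12\|y_k-z\|^2 - h_k - \alpha_k(h_k-h_{k-1})$ via a polarization around $y_k$, and then expand $\|y_k-z\|^2$ using $y_k-z=(x_k-z)+\alpha_k(x_k-x_{k-1})$ together with a second polarization to identify the cross term with $h_k-h_{k-1}+\tfrac12\|x_k-x_{k-1}\|^2$. The only cosmetic difference is that the paper performs these two expansions in the opposite order and writes the first polarization as $\tfrac12\|x_{k+1}-z\|^2-\tfrac12\|y_k-z\|^2=\langle x_{k+1}-y_k,\tfrac12(x_{k+1}+y_k)-z\rangle$ before rearranging.
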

\begin{proof} Since $y_k=   x_{k} + \alpha_k( x_{k}  - x_{k-1})$, we have
\begin{align}
\|  y_k -z\|^2  &=  \|  (x_{k} -z)+ \alpha_k( x_{k}  - x_{k-1}) \|^2   \nonumber \\
 &=  \| x_{k} -z \|^2 + {\alpha_k}^2 \| x_{k}  - x_{k-1} \|^2  +2 \alpha_k \langle x_{k} - z , x_{k}-x_{k-1} \rangle,\nonumber\\
 &=  \| x_{k} -z \|^2 + {\alpha_k}^2 \| x_{k}  - x_{k-1} \|^2  + \alpha_k  \| x_{k} -z \|^2  + \alpha_k\|  x_{k}-x_{k-1} \|^2 - \alpha_k\|  x_{k-1} -z \|^2 \nonumber\\
 &=  \| x_{k} -z \|^2 +  \alpha_k ( \| x_{k} -z \|^2  - \|  x_{k-1} -z \|^2 ) + (\alpha_k + {\alpha_k}^2 )\| x_{k} - x_{k-1} \|^2 \nonumber\\ 
 &= 2(h_k + \alpha_k (h_{k} - h_{k-1} )) +  (\alpha_k + {\alpha_k}^2 )\| x_{k} - x_{k-1} \|^2 .  \label{ineq10}        
\end{align}
Combining (\ref{ineq10}) with the elementary equality
\begin{equation*}
(h_{k+1} -h_k) - \alpha_k (h_{k} - h_{k-1} ) =    
 \frac{1}{2} \| x_{k+1} -z \|^2 -(h_k + \alpha_k (h_{k} - h_{k-1} )    ),    
\end{equation*}
we deduce that
 \begin{align*}
(h_{k+1} -h_k) - \alpha_k (h_{k} - h_{k-1} ) &= 
\frac{1}{2} \| x_{k+1} -z \|^2  -  \frac{1}{2} \| y_{k} -z \|^2   +  \frac{1}{2} (\alpha_k + {\alpha_k}^2 )\| x_{k} - x_{k-1} \|^2 \\
&=  \langle x_{k+1} - y_k , \frac{1}{2} ( x_{k+1} +y_k) -z\rangle + \frac{1}{2} (\alpha_k + {\alpha_k}^2 )\| x_{k} - x_{k-1} \|^2 \\
&=   \langle x_{k+1} - y_k , ( x_{k+1}-z)  + \frac{1}{2}(y_k - x_{k+1})\rangle + \frac{1}{2} (\alpha_k + {\alpha_k}^2 )\| x_{k} - x_{k-1} \|^2 \\
&= \langle x_{k+1} - y_k , x_{k+1} - z \rangle - \frac{1}{2} \| x_{k+1} - y_k \|^2  + \frac{1}{2} (\alpha_k + {\alpha_k}^2 )\| x_{k} - x_{k-1} \|^2
\end{align*}
which gives the claim. 
\end{proof}
Let us now use the specific form of the inertial algorithm (RIPA) and the cocoercivity of the operator $A_{\lambda}$. The following result is the discrete counterpart of Lemma \ref{L:h_z}:
\begin{lemma}\label{$h_k$ inequality-2 }
Let $S = A^{-1}(0) \neq \emptyset$,
and $0 \leq  \alpha_k \leq 1$. For any $z \in S$, the following holds for all $k \geq 1$
\begin{equation}\label{algo14}
(h_{k+1} - h_k )- \alpha_k (h_{k} - h_{k-1} ) +s 
\lambda_k \|  A_{\lambda_k + s}\left( y_k \right) \|^2   \leq \alpha_k \| x_{k} - x_{k-1} \|^2 .
\end{equation}
\end{lemma}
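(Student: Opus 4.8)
The plan is to combine the algebraic identity of Lemma~\ref{$h_k$ inequality} with the specific update rule of (RIPA) and the cocoercivity of the Yosida regularization. Recall from \eqref{algo11} that the second line of (RIPA) reads $x_{k+1} = y_k - s A_{\lambda_k+s}(y_k)$, so that $x_{k+1} - y_k = -s A_{\lambda_k+s}(y_k)$. Substituting this into \eqref{algo13} turns the inner-product term $-\langle x_{k+1}-y_k, x_{k+1}-z\rangle$ into $s\langle A_{\lambda_k+s}(y_k), x_{k+1}-z\rangle$ and the quadratic term $\frac12\|x_{k+1}-y_k\|^2$ into $\frac{s^2}{2}\|A_{\lambda_k+s}(y_k)\|^2$.

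First I would rewrite $x_{k+1}-z = (y_k - z) - s A_{\lambda_k+s}(y_k)$ inside that inner product, giving
\[
s\langle A_{\lambda_k+s}(y_k), x_{k+1}-z\rangle = s\langle A_{\lambda_k+s}(y_k), y_k - z\rangle - s^2\|A_{\lambda_k+s}(y_k)\|^2.
\]
Next, since $z \in S = A^{-1}(0)$, we have $A_{\mu}(z)=0$ for every $\mu>0$, in particular for $\mu = \lambda_k+s$; the $(\lambda_k+s)$-cocoercivity of $A_{\lambda_k+s}$ (Appendix~\ref{SS:Yosida}) then yields
\[
\langle A_{\lambda_k+s}(y_k), y_k - z\rangle \;\ge\; (\lambda_k+s)\,\|A_{\lambda_k+s}(y_k)\|^2.
\]
Plugging these two facts back into the identity \eqref{algo13}, the terms in $\|A_{\lambda_k+s}(y_k)\|^2$ combine with coefficient $s(\lambda_k+s) - s^2 + \frac{s^2}{2} = s\lambda_k + \frac{s^2}{2} \ge s\lambda_k$; moving the quadratic term to the left and dropping the nonnegative $\frac{s^2}{2}\|A_{\lambda_k+s}(y_k)\|^2$ contribution only weakens the inequality. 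Finally, since $0\le\alpha_k\le 1$, we have $\alpha_k+\alpha_k^2 \le 2\alpha_k$, so the right-hand side $\frac12(\alpha_k+\alpha_k^2)\|x_k-x_{k-1}\|^2$ is bounded above by $\alpha_k\|x_k-x_{k-1}\|^2$, which produces exactly \eqref{algo14}.

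The only subtle point — and the one I would be most careful about — is the bookkeeping of the sign and coefficient of the $\|A_{\lambda_k+s}(y_k)\|^2$ terms: there are three separate contributions (from the quadratic term in the identity, from the inner product, and from the cocoercivity estimate), and one must check they assemble with a net coefficient at least $s\lambda_k$ rather than, say, $s\lambda_k - s^2$. The computation above shows the net coefficient is $s\lambda_k + \frac{s^2}{2}$, comfortably above $s\lambda_k$, so the cocoercivity constant $\lambda_k+s$ (as opposed to merely $\lambda_k$) is precisely what makes the estimate go through cleanly. Everything else is the routine substitution $x_{k+1}-y_k = -sA_{\lambda_k+s}(y_k)$ and the elementary bound $\alpha_k+\alpha_k^2\le 2\alpha_k$ for $\alpha_k\in[0,1]$.
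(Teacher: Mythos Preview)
Your proof is correct and follows essentially the same route as the paper's own argument: substitute $x_{k+1}-y_k=-sA_{\lambda_k+s}(y_k)$ into the identity \eqref{algo13}, apply the $(\lambda_k+s)$-cocoercivity of $A_{\lambda_k+s}$ at the pair $(y_k,z)$, collect the $\|A_{\lambda_k+s}(y_k)\|^2$ terms to get the coefficient $s(\lambda_k+\tfrac{s}{2})\ge s\lambda_k$, and bound $\tfrac12(\alpha_k+\alpha_k^2)\le\alpha_k$. The paper's proof is organized identically, only writing the inner product directly as $s\langle A_{\lambda_k+s}(y_k),\,y_k - sA_{\lambda_k+s}(y_k)-z\rangle$ rather than splitting $x_{k+1}-z=(y_k-z)-sA_{\lambda_k+s}(y_k)$ first, which is a purely cosmetic difference.
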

\begin{proof}
By Lemma \ref{$h_k$ inequality} and the fact that $x_{k+1} = y_k -s A_{\lambda_k + s}\left( y_k \right)$
(see (\ref{algo11})), we have
\begin{align*}
&(h_{k+1} - h_k )- \alpha_k (h_{k} - h_{k-1} ) +s 
\langle  A_{\lambda_k + s}\left( y_k \right) , y_k -s A_{\lambda_k + s}\left( y_k \right) - z \rangle + \frac{s^2}{2} \| A_{\lambda_k + s}\left( y_k \right) \|^2 \\  &= \frac{1}{2} (\alpha_k + {\alpha_k}^2 )\| x_{k} - x_{k-1} \|^2 .
\end{align*}
Since $z\in S$, we have $A_{\lambda_k + s}(z)=0  $. By the $(\lambda_k + s)$-cocoercivity property of $A_{\lambda_k + s}$, we deduce that
\begin{equation*} 
\langle  A_{\lambda_k + s}\left( y_k \right) , y_k-z\rangle
 \geq (\lambda_k + s) \| A_{\lambda_k + s}\left( y_k \right) \|^2 .
\end{equation*}
As a consequence,
\begin{align*}
(h_{k+1} - h_k )- \alpha_k (h_{k} - h_{k-1} ) +s 
(\lambda_k + \frac{s}{2} ) \|  A_{\lambda_k + s}\left( y_k \right) \|^2  \leq \frac{1}{2} (\alpha_k + {\alpha_k}^2 )\| x_{k} - x_{k-1} \|^2 .
\end{align*}
But $\frac{1}{2} (\alpha_k + {\alpha_k}^2 ) \leq \alpha_k$ because $0 \leq  \alpha_k \leq 1$. Since $s\ge 0$, the result follows immediately. 
\end{proof}

It would be possible to continue the analysis assuming the right-hand side of (\ref{algo14}) is summable. The main disadvantage of this hypothesis is that it involves the trajectory $(x_k)$, which is unknown. In the following lemma, we show that the two antagonistic terms $s(\lambda_k + \frac{s}{2} ) \|  A_{\lambda_k + s}\left( y_k \right) \|^2 $ and $ \alpha_k \| x_{k} - x_{k-1} \|^2  $
can be balanced, provided $\lambda_k$ is taken large enough.
 
\begin{lemma}\label{$h_k$ inequality-3 }
Let $ S = A^{-1}(0) \neq \emptyset$, and take $\alpha_k =1 -\frac{\alpha}{k}$ with $\alpha >2$. For each $k \geq 1$, set
\begin{equation}\label{algo-basic-assumption}
\lambda_k  = (1+ \epsilon)\frac{s}{\alpha^2}k^2,
\end{equation}
for some $\epsilon >0$, and write $\beta= \frac{1 + \epsilon}{\alpha} $. Then, for each $z \in S$ and all $k \geq 1$, we have
\begin{equation}\label{algo15}
(h_{k+1} - h_k )- \alpha_k (h_{k} - h_{k-1} )  + 
\epsilon  \|x_{k}  - x_{k-1}  \|^2   + \beta k\left( \| x_{k+1} - x_{k}\|^2  -  \| x_{k} - x_{k-1} \|^2  \right) \leq 0.
\end{equation}
\end{lemma}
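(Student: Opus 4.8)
The statement to prove, Lemma~\ref{$h_k$ inequality-3 }, is the discrete analogue of Lemma~\ref{L:h_z-2}, so I expect the proof to mirror the continuous one: start from the cocoercivity inequality (\ref{algo14}) of Lemma~\ref{$h_k$ inequality-2 }, then substitute the update rule to re-express $A_{\lambda_k+s}(y_k)$ in terms of the discrete velocity increments $x_{k+1}-x_k$ and $x_k-x_{k-1}$, and finally use the size condition (\ref{algo-basic-assumption}) on $\lambda_k$ to absorb a bad term. Concretely, from (\ref{algo11}) we have $x_{k+1}-y_k = -sA_{\lambda_k+s}(y_k)$, and since $y_k = x_k + \alpha_k(x_k-x_{k-1})$ this gives $s A_{\lambda_k+s}(y_k) = -(x_{k+1}-x_k) + \alpha_k(x_k-x_{k-1})$. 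Hence $s^2\|A_{\lambda_k+s}(y_k)\|^2 = \|x_{k+1}-x_k\|^2 + \alpha_k^2\|x_k-x_{k-1}\|^2 - 2\alpha_k\langle x_{k+1}-x_k, x_k-x_{k-1}\rangle$.

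**Main steps.** First I substitute this identity into (\ref{algo14}): the left-hand side contains $s\lambda_k\|A_{\lambda_k+s}(y_k)\|^2 = \frac{\lambda_k}{s}\,s^2\|A_{\lambda_k+s}(y_k)\|^2$, so I multiply the expansion above by $\lambda_k/s$. With the choice $\lambda_k = (1+\epsilon)\frac{s}{\alpha^2}k^2$ we get $\frac{\lambda_k}{s} = (1+\epsilon)\frac{k^2}{\alpha^2} = \frac{\beta}{\alpha}k^2$ where $\beta = \frac{1+\epsilon}{\alpha}$; also $\alpha_k = 1-\frac{\alpha}{k}$, so $\frac{\lambda_k}{s}\alpha_k^2 = \frac{\beta}{\alpha}k^2(1-\frac{\alpha}{k})^2 = \frac{\beta}{\alpha}(k-\alpha)^2 = \frac{\beta}{\alpha}(k^2 - 2\alpha k + \alpha^2) = \frac{\beta}{\alpha}k^2 - 2\beta k + \beta\alpha$. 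The plan is then to collect the coefficient of $\|x_k-x_{k-1}\|^2$: it will be $\alpha_k$ (from the right side of (\ref{algo14}), moved left) plus $\frac{\lambda_k}{s}\alpha_k^2$, against which we must compare. The cross term $-2\frac{\lambda_k}{s}\alpha_k\langle x_{k+1}-x_k,x_k-x_{k-1}\rangle$ needs to be reshaped; I expect to use the polarization trick $-2\langle a,b\rangle = \|a-b\|^2 - \|a\|^2 - \|b\|^2$ or rather the telescoping-friendly combination so that a clean difference $\beta k(\|x_{k+1}-x_k\|^2 - \|x_k-x_{k-1}\|^2)$ emerges, exactly as in the continuous proof where $\alpha\frac{\lambda(t)}{t}\frac{d}{dt}\|\dot x\|^2$ appeared. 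The bookkeeping should leave a residual coefficient on $\|x_k-x_{k-1}\|^2$ equal to something like $\frac{\lambda_k}{s}\alpha_k^2 + \alpha_k - \frac{\lambda_k}{s} + (\text{correction from the cross term})$, which after the cancellation $\frac{\beta}{\alpha}k^2 - \frac{\beta}{\alpha}k^2 = 0$ collapses to a manageable expression bounded below by $\epsilon$; the inequality direction comes from dropping the always-nonnegative term $\frac{s^2}{2}\cdot\frac{1}{s}\|A_{\lambda_k+s}(y_k)\|^2$-type leftover (note (\ref{algo14}) already discarded the $\frac{s}{2}$ part, so this is consistent).

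**Expected obstacle.** The delicate point is the algebraic reorganization of the cross term into the clean telescoping form $\beta k(\|x_{k+1}-x_k\|^2 - \|x_k-x_{k-1}\|^2)$ while simultaneously checking that the leftover coefficient on $\|x_k-x_{k-1}\|^2$ is exactly $\epsilon$ (or at least $\geq \epsilon$, with any surplus being a nonnegative term that can be dropped). This is the discrete shadow of "expand $\|\ddot x + \frac{\alpha}{t}\dot x\|^2$ and use $\lambda(t)\frac{\alpha^2}{t^2} - 1 \geq \epsilon$", but discretization introduces lower-order-in-$k$ terms (the $-2\beta k$ and $+\beta\alpha$ above) that must be tracked and shown to have the right sign or be absorbable; getting the constant to come out as precisely $\epsilon$ rather than $\epsilon$ plus junk requires care with which quantity plays the role of "$\frac{d}{dt}\|\dot x\|^2$". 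Once that identity is pinned down, the conclusion (\ref{algo15}) follows by substituting $\beta k = \frac{\lambda_k}{s}\cdot\frac{\alpha}{k}\cdot\frac{k}{\alpha}\cdot$... — more simply, by recognizing $\beta k$ as the natural discrete weight — and discarding the nonnegative squared-operator remainder.
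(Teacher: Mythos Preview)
Your plan is correct and follows essentially the same route as the paper's proof: start from (\ref{algo14}), use $x_{k+1}-y_k=-sA_{\lambda_k+s}(y_k)$ to rewrite the cocoercivity term as $\frac{\lambda_k}{s}\|x_{k+1}-y_k\|^2$, expand in terms of the increments, drop a nonnegative square, and use $\lambda_k\alpha^2/(sk^2)=1+\epsilon$ together with $\alpha_k\le 1$ to extract the coefficient $\epsilon$.

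One clarification worth making: the nonnegative term you drop is \emph{not} an ``$\|A_{\lambda_k+s}(y_k)\|^2$-type leftover'' as you wrote, but rather $\frac{\lambda_k}{s}\alpha_k\|(x_{k+1}-x_k)-(x_k-x_{k-1})\|^2$, the squared discrete acceleration. The paper makes this transparent by decomposing $x_{k+1}-y_k=[(x_{k+1}-x_k)-(x_k-x_{k-1})]+\frac{\alpha}{k}(x_k-x_{k-1})$ from the start, rather than $(x_{k+1}-x_k)-\alpha_k(x_k-x_{k-1})$ as you do; your polarization $-2\langle a,b\rangle=\|a-b\|^2-\|a\|^2-\|b\|^2$ recovers exactly the same identity. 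After dropping that square and using $\alpha_k\le 1$ on the right-hand side, the coefficient on $\|x_k-x_{k-1}\|^2$ becomes $(1+\epsilon)-\beta k-1=\epsilon-\beta k$ and the one on $\|x_{k+1}-x_k\|^2$ is $\beta k$, which is (\ref{algo15}).
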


\begin{proof}
First, rewrite (\ref{algo14}) as 
\begin{align}\label{algo16}
(h_{k+1} - h_k )- \alpha_k (h_{k} - h_{k-1} ) + 
\frac{\lambda_k }{s} \| x_{k+1} - y_k  \|^2   \leq \alpha_k \| x_{k} - x_{k-1} \|^2 .
\end{align}
Let us write $\| x_{k+1} - y_k  \|^2$  in a recursive form. To this end, we use the specific form of $\alpha_k = 1- \frac{\alpha}{k}$ to obtain
\begin{align*}
\| x_{k+1} - y_k  \|^2 &= \| (x_{k+1} - x_{k}) - \alpha_k( x_{k}  - x_{k-1})  \|^2  \\
&= \| (x_{k+1} - x_{k}) - (x_{k} - x_{k-1}   )+ (1-\alpha_k)( x_{k}  - x_{k-1})  \|^2 \\
&= \left\| (x_{k+1} - x_{k}) - (x_{k} - x_{k-1}   )+ \frac{\alpha}{k}( x_{k}  - x_{k-1}) \right\|^2 \\
&=\| (x_{k+1} - x_{k}) - (x_{k} - x_{k-1}   )\|^2 + 
\frac{\alpha^2}{k^2}\|x_{k}  - x_{k-1}  \|^2  + 
2 \frac{\alpha}{k} \langle (x_{k+1} - x_{k}) - (x_{k} - x_{k-1}),  x_{k}  - x_{k-1}  \rangle.
\end{align*}
But
$$
\frac{1}{2}\| x_{k+1} - x_{k}\|^2  = \frac{1}{2} \| x_{k} - x_{k-1} \|^2    + \langle (x_{k+1} - x_{k}) - (x_{k} - x_{k-1}),  x_{k}  - x_{k-1}  \rangle + \frac{1}{2}\| (x_{k+1} - x_{k})-(x_{k} - x_{k-1}) \|^2 .  
$$
By combining the above equalities, we get
\begin{align*}
\| x_{k+1} - y_k  \|^2 
&=\left(1- \frac{\alpha}{k}\right)\| (x_{k+1} - x_{k}) - (x_{k} - x_{k-1}   )\|^2 + 
\frac{\alpha^2}{k^2}\|x_{k}  - x_{k-1}  \|^2  + 
 \frac{\alpha}{k} \left(\| x_{k+1} - x_{k}\|^2  -  \| x_{k} - x_{k-1} \|^2 . \right)
\end{align*}
Using this equality in (\ref{algo16}), and neglecting the nonnegative term $(1- \frac{\alpha}{k})\| (x_{k+1} - x_{k}) - (x_{k} - x_{k-1}   )\|^2$, we obtain
\begin{align}\label{algo17}
(h_{k+1} - h_k )- \alpha_k (h_{k} - h_{k-1} )  + 
 \frac{\lambda_k \alpha^2}{sk^2}  \|x_{k}  - x_{k-1}  \|^2   +  \frac{\lambda_k \alpha}{sk}  \left( \| x_{k+1} - x_{k}\|^2  -  \| x_{k} - x_{k-1} \|^2  \right)
\leq \| x_{k} - x_{k-1} \|^2 .
\end{align}
Using (\ref{algo-basic-assumption}) and the definition $\beta= \frac{1 + \epsilon}{\alpha}$, inequality (\ref{algo17}) becomes (\ref{algo15}).
\end{proof}

\subsection{Main convergence result}

We are now in position to prove the main result of this section, namely:

\begin{Theorem}\label{T:weak_convergence-discrete}
Let $A: \mathcal H \rightarrow 2^{\mathcal H}$ be  a maximally monotone operator such that  $S= A^{-1} (0)\neq \emptyset$. 
Let $(x_k)$ be a sequence generated by the Regularized  Inertial Proximal Algorithm $$
\mbox{\rm (RIPA)} \qquad \left\{
\begin{array}{rcl}
y_k&= &  \displaystyle{x_{k} + \left( 1- \frac{\alpha}{k}\right)( x_{k}  - x_{k-1})} \\
\rule{0pt}{20pt}
 x_{k+1} & = & \displaystyle{\frac{\lambda_k}{\lambda_k +s}y_k + \frac{s}{\lambda_k +s}\left(I + (\lambda_k +s) A\right)^{-1} \left( y_k \right)},
 \end{array}\right.
$$
where $\alpha>2$ and
$$ \lambda_k  = (1+ \epsilon)\frac{s}{\alpha^2}k^2$$
for some $\epsilon > \frac{2}{\alpha -2}$ and all $k \geq 1$. Then,
\begin{itemize}
\item [i)] The speed tends to zero. More precisely, $\| x_{k+1} - x_{k} \| = \mathcal O (\frac{1}{k})$ and $\sum_k  k\| x_{k} - x_{k-1} \|^2 < +\infty$.
\item [ii)] The sequence $(x_k)$ converges weakly, as $k\to+\infty$, to some $\hat x\in S$.
\item [iii)] The sequence $(y_k)$ converges weakly, as $k\to+\infty$, to $\hat x$.
\end{itemize}
\end{Theorem}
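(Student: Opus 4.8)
The plan is to transpose, step by step, the proof of Theorem~\ref{T:weak_convergence} to the discrete setting, replacing the Lyapunov analysis of the ODE by its difference-equation analogue built on the three estimates already in hand: the algebraic identity of Lemma~\ref{$h_k$ inequality}, its cocoercive sharpening Lemma~\ref{$h_k$ inequality-2 }, and the balanced inequality (\ref{algo15}) of Lemma~\ref{$h_k$ inequality-3 }. Fix $z\in S$, abbreviate $h_k:=h_{z,k}$ as in (\ref{algo12}), and set $d_k:=\|x_k-x_{k-1}\|^2$. Inequality (\ref{algo15}) is the exact counterpart of (\ref{E:ineq_h_z-4}), with the forward difference $d_{k+1}-d_k$ in the role of $\dot g$. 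The first move is therefore to multiply (\ref{algo15}) by $k$ and sum from $1$ to $n$; a discrete summation by parts applied to $\sum_k k\big[(h_{k+1}-h_k)-\alpha_k(h_k-h_{k-1})\big]$ (using $k\alpha_k=k-\alpha$) and to $\beta\sum_k k^2(d_{k+1}-d_k)$ produces the discrete form of (\ref{E:ineq_h_z-7}),
$$n(h_{n+1}-h_n)+(\alpha-1)h_n+\beta n^2 d_{n+1}+(\epsilon-2\beta)\sum_{k=1}^n kd_k+\beta\sum_{k=1}^n d_k \le C,$$
with $C$ depending only on the initial data and $\epsilon-2\beta=\frac{\alpha-2}{\alpha}\big(\epsilon-\frac{2}{\alpha-2}\big)>0$ by hypothesis.

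For part (i), discard the three nonnegative sums to obtain $nh_{n+1}\le(n-\alpha+1)h_n+C$. This is the discrete version of ``$t\dot h+(\alpha-1)h\le C$ multiplied by $t^{\alpha-2}$'': multiplying by the weight $w_n:=\prod_{j=n_0}^{n-1}\frac{j}{j-\alpha+1}$, which satisfies $w_n\asymp n^{\alpha-1}$, gives $w_{n+1}h_{n+1}\le w_n h_n+\mathcal O(n^{\alpha-2})$, and since $\alpha>2$ a telescoping sum shows $(h_n)$ — hence $(x_k)$ — is bounded; put $M:=\sup_k\|x_k\|$. Then $|h_{k+1}-h_k|\le (M+\|z\|)\,\|x_{k+1}-x_k\|$, so the displayed inequality forces $\beta\big(n\|x_{n+1}-x_n\|\big)^2\le C+(M+\|z\|)\,n\|x_{n+1}-x_n\|$, a quadratic inequality in $n\|x_{n+1}-x_n\|$ that yields $\|x_{k+1}-x_k\|=\mathcal O(1/k)$; feeding this back, $n|h_{n+1}-h_n|$ stays bounded, hence $(\epsilon-2\beta)\sum_{k=1}^n kd_k$ is bounded uniformly in $n$, i.e. $\sum_k k\|x_k-x_{k-1}\|^2<+\infty$. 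This is (i).

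For part (ii) I would return, as in the continuous proof, to the sharper inequality (\ref{algo14}). Since $\alpha_k d_k\le d_k$ and $\sum_k d_k<+\infty$, the second-order difference inequality $(h_{k+1}-h_k)-\frac{k-\alpha}{k}(h_k-h_{k-1})+s\lambda_k\|A_{\lambda_k+s}(y_k)\|^2\le\alpha_k d_k$, fed to the discrete counterpart of Lemma~\ref{basic-edo}, gives both that $\lim_k h_k$ exists (so $\lim_k\|x_k-z\|$ exists for every $z\in S$) and, after multiplying (\ref{algo14}) by $k$ and summing exactly as above, that $\sum_k k\lambda_k\|A_{\lambda_k+s}(y_k)\|^2<+\infty$. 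Writing $v_k:=(\lambda_k+s)A_{\lambda_k+s}(y_k)=\frac{\lambda_k+s}{s}(y_k-x_{k+1})$ — the discrete analogue of $\lambda(t)A_{\lambda(t)}x(t)$ — and using $\lambda_k=ck^2$, this rewrites as the harmonic-weighted estimate $\sum_k\frac1k\|v_k\|^2<+\infty$, the exact parallel of (\ref{E:ineq_h_z-15}). The central step is then to upgrade this to $\|v_k\|\to0$: applying Lemma~\ref{L:basic-variation} to the pairs $(\lambda_{k+1}+s,y_{k+1})$ and $(\lambda_k+s,y_k)$, and using the boundedness of $(y_k)$ (from (i)), the speed bound $\|y_{k+1}-y_k\|\le 2\|x_{k+1}-x_k\|+\|x_k-x_{k-1}\|=\mathcal O(1/k)$, and $\frac{|\lambda_{k+1}-\lambda_k|}{\lambda_{k+1}+s}=\mathcal O(1/k)$, one gets $\|v_{k+1}-v_k\|\le C/k$; comparing instead with $z\in S$ (where $A_{\lambda_k+s}(z)=0$) shows $(v_k)$ bounded. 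Hence $w_k:=\|v_k\|^2$ satisfies $|w_{k+1}-w_k|\le\eta_k$ with $\eta_k=C'/k\notin\ell^1$ and $\sum_k\eta_k w_k<+\infty$, so the discrete counterpart of Lemma~\ref{L:basic-integration} yields $w_k\to0$, i.e. $v_k\to0$ strongly.

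It remains to identify the weak limit points and invoke Opial. Since $\lambda_k+s\to+\infty$, also $A_{\lambda_k+s}(y_k)=\frac1{\lambda_k+s}v_k\to0$ strongly; moreover $\|y_k-x_k\|=\alpha_k\|x_k-x_{k-1}\|\to0$, so $(y_k)$ has the same weak limit points as $(x_k)$. If $x_{k_n}\rightharpoonup\bar x$, then $y_{k_n}-v_{k_n}\rightharpoonup\bar x$, and passing to the limit in $A_{\lambda_{k_n}+s}(y_{k_n})\in A\big(y_{k_n}-v_{k_n}\big)$ together with the demiclosedness of the graph of $A$ gives $0\in A(\bar x)$, i.e. $\bar x\in S$. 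With the existence of $\lim_k\|x_k-z\|$ for all $z\in S$, the (sequential) Opial Lemma~\ref{Opial} gives $x_k\rightharpoonup\hat x\in S$, which is (ii), and then $y_k=x_k+\alpha_k(x_k-x_{k-1})\rightharpoonup\hat x$, which is (iii). The main obstacle is precisely the one flagged in the continuous case: passing from the merely harmonic-weighted summability $\sum_k\frac1k\|v_k\|^2<+\infty$ to the pointwise convergence $\|v_k\|\to0$ — false without extra information, and here resting entirely on the slow-variation bound $\|v_{k+1}-v_k\|=\mathcal O(1/k)$, whose proof combines the joint Lipschitz estimate for Yosida approximations in base point and index (Lemma~\ref{L:basic-variation}) with the decay $\|x_{k+1}-x_k\|=\mathcal O(1/k)$ from part (i).
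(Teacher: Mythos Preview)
Your proposal is correct and follows essentially the same route as the paper's proof: multiply (\ref{algo15}) by $k$ and sum to obtain the discrete analogue of (\ref{E:ineq_h_z-7}), extract boundedness and the speed estimate, then return to (\ref{algo14}) with Lemma~\ref{diff-ineq-disc} for the existence of $\lim_k h_k$ and the summability $\sum_k k\lambda_k\|A_{\lambda_k+s}(y_k)\|^2<+\infty$, upgrade the harmonic-weighted sum to pointwise convergence via Lemma~\ref{L:basic-variation}, and close with demiclosedness and Opial. The only two deviations are cosmetic: for the boundedness of $(h_k)$ the paper uses the simpler telescoping $kh_{k+1}-(k-1)h_k\le C$ (immediate from $\alpha-1>1$) rather than your product weight $w_n$, and for the central limit the paper first transfers the estimate from $y_k$ to $x_k$ (via the $\frac{1}{\lambda_k+s}$-Lipschitz property of $A_{\lambda_k+s}$) and then works with $\mu_kA_{\mu_k}(x_k)$, whereas you stay with $y_k$ throughout --- your choice is slightly more direct and equally valid.
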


\begin{proof}
First, we simplify the writing of the  equation (\ref{algo15}) given in Lemma \ref{$h_k$ inequality-3 }. Setting $g_k:= \|x_{k}  - x_{k-1}  \|^2$, we have 
$$
(h_{k+1} - h_k )- \alpha_k (h_{k} - h_{k-1} )  + 
\epsilon g_k   + \beta k\left( g_{k+1}-  g_k\right) \leq 0.
$$
Then, we multiply by $k$ to obtain
$$
k(h_{k+1} - h_k )- (k-\alpha) (h_{k} - h_{k-1} )  + 
\epsilon kg_k   + \beta k^2\left( g_{k+1}-  g_k\right) \leq 0.
$$
We now write these inequalities in a recursive form, in order to simplify their summation. We have
$$
k(h_{k+1} - h_k )- (k-1) (h_{k} - h_{k-1} )  +(\alpha -1) (h_{k} - h_{k-1} ) 
+ \epsilon kg_k   + \beta k^2 g_{k+1}- \beta (k-1)^2 g_{k} -\beta (2k-1)g_k  \leq 0.
$$
Summing for $p=1,\dots, k$, we obtain
\begin{equation} \label{algo22}
k(h_{k+1} - h_k )  +(\alpha -1) h_{k} 
+ (\epsilon -2 \beta) \sum_1^k  pg_p   + \beta k^2 g_{k+1} +\beta \sum_1^k  g_p  \leq C
\end{equation}
for some positive constant $C$ that depends only on the initial data.
Since $\beta= \frac{1 + \epsilon}{\alpha}$ with $\alpha >2$ and $\epsilon > \frac{2}{\alpha -2}$, we have $\epsilon -2 \beta = \frac{\alpha -2}{\alpha}\left(\epsilon -  \frac{2}{\alpha -2} \right)>0$. From (\ref{algo22}) we infer that
\begin{equation} \label{algo23}
k(h_{k+1} - h_k )  +(\alpha -1) h_{k} 
 \leq C
\end{equation}
for all $k\geq 1$. Since $\alpha >2$ and $h_k\ge 0$, (\ref{algo23}) implies
\begin{equation*} 
k(h_{k+1} - h_k )  + h_{k} \leq C .
\end{equation*}
Equivalently,
$$
kh_{k+1} - (k-1)h_k    \leq C .
$$
Applying this fact recursively, we deduce that
$$
kh_{k+1}  \leq Ck,
$$
which immediately gives $\sup_k \ h_k < +\infty$. Therefore, the sequence $(x_k)$ is bounded. Set
$$
M := \sup_k  \|  x_k \| < +\infty.
$$
Now, (\ref{algo22}) also implies that
\begin{equation} \label{algo27}
k(h_{k+1} - h_k )  + \beta k^2 g_{k+1}  \leq C
\end{equation}
But
\begin{equation} \label{E:h_k/x_k}
|h_{k+1} - h_k | = \frac{1}{2}  \left| \|x_{k+1} - z   \|^2- \|x_{k} - z   \|^2 \right| 
=  \left| \left\langle x_{k+1} - x_{k}, \frac{1}{2} (  x_{k+1}+ x_{k} ) -z\right\rangle \right|
 \leq (M+ \|z\|)\| x_{k+1} - x_{k} \|.
\end{equation} 
Combining this inequality with (\ref{algo27}), and recalling the definition $g_k=\| x_{k} - x_{k-1} \|^2$, we deduce that
$$
\beta \big[k \|x_{k+1}  - x_{k}  \|\big]^2 -(M+ \|z\|)\,\big[k\| x_{k+1} - x_{k} \|\big]-C \leq 0.
$$
This immediately implies that
\begin{equation} \label{algo29}
 \sup_k  k\| x_{k+1} - x_{k} \| < +\infty.
\end{equation}
In other words, $\| x_{k+1} - x_{k} \| = \mathcal O (\frac{1}{k})$. Another consequence of (\ref{algo22}) is that
$$(\epsilon -2 \beta) \sum_{p=1}^k  p\| x_{p} - x_{p-1} \|^2  \leq C - k(h_{k+1} - h_k ).$$
By (\ref{E:h_k/x_k}), we deduce that
$$(\epsilon -2 \beta) \sum_{p=1}^k  p\| x_{p} - x_{p-1} \|^2  \leq C +k(M+ \|z\|)\| x_{k+1} - x_{k} \|.$$
Then, (\ref{algo29}) gives
\begin{equation} \label{algo30}
 \sum_k  k\| x_{k} - x_{k-1} \|^2 < +\infty,
\end{equation}
which completes the proof of item i).

\smallskip

For the convergence of the sequence $(x_k)$, we use Opial's Lemma  \ref{Opial-discrete}. First, since $\alpha_k=1-\frac{\alpha}{k}\le 1$, Lemma \ref{$h_k$ inequality-2 } gives
$$
(h_{k+1} - h_k)+s 
\lambda_k \|  A_{\lambda_k + s}\left( y_k \right) \|^2   \leq \left(1-\frac{\alpha}{k}\right)(h_{k} - h_{k-1}) +\| x_{k} - x_{k-1} \|^2
$$
for all $k \geq 1$. Using (\ref{algo30}) and invoking Lemma \ref{diff-ineq-disc}, we deduce that
\begin{equation}\label{algo32}
\sum_k k\lambda_k \|  A_{\lambda_k + s}\left( y_k \right) \|^2  < + \infty.
\end{equation}
and
$$\sum_k  [h_{k} - h_{k-1}]_+< + \infty.$$
Since $h_k$ is nonnegative, this implies the existence of $\lim_{k\to+\infty}h_k$, and also that of $\lim_{k\to+\infty}\|x_k-z\|$.

In order to conclude using Opial's Lemma \ref{Opial-discrete}, it remains to show that every weak limit point of the sequence $(x_k)$, as $k\to+\infty$, belongs to $S$.
We begin by expressing (\ref{algo32}) with respect to $x_k$, instead of $y_k$. We have
\begin{align*}
\|  A_{\lambda_k +s}\left( x_k \right) \|^2 &\leq
2\|  A_{\lambda_k +s}\left( y_k \right) \|^2
+2 \|  A_{\lambda_k +s}\left( x_k \right) -A_{\lambda_k +s}\left( y_k \right) \|^2 \\
& \leq 2\|  A_{\lambda_k +s}\left( y_k \right) \|^2 + 
\frac{2}{(\lambda_k +s)^2}\| y_k - x_k\|^2 \\
&\leq 2\|  A_{\lambda_k +s}\left( y_k \right) \|^2 + 
\frac{2}{\lambda_k ^2}\|  x_k - x_{k-1}\|^2 ,
\end{align*}
where the last inequality follows from the definition of $y_k$ given in (\ref{algo7}). Using (\ref{algo29}) and the definition of $\lambda_k$, we may find a constant $D\ge 0$ such that
\begin{align*}
\|  A_{\lambda_k +s}\left( x_k \right) \|^2 
& \leq 2\|  A_{\lambda_k +s}\left( y_k \right) \|^2 + 
\frac{D}{k^2\lambda_k^2}. 
\end{align*}
Hence,
$$
\sum_k k\lambda_k \|  A_{\lambda_k + s}\left( x_k \right) \|^2  \leq  2 \sum_k k\lambda_k\|  A_{\lambda_k +s}\left( y_k \right) \|^2 + \sum_k \frac{D}{k\lambda_k}.
$$
Now, using (\ref{algo32}) and the definition of $\lambda_k$, we conclude that
$$
\sum_k k\lambda_k \|  A_{\lambda_k + s}\left( x_k \right) \|^2  < +\infty. 
$$
Since $\lambda_k$ tends to infinity, this immediately implies
$$
\sum_k k(\lambda_k +s)\|  A_{\lambda_k + s}\left( x_k \right) \|^2  < +\infty. 
$$
To simplify the notations, set $\mu_k = \lambda_k +s$, so that
\begin{equation}\label{algo36-d}
\sum_k k\mu_k \|  A_{\mu_k}\left( x_k \right) \|^2  < +\infty. 
\end{equation}
As we shall see, this fact implies
\begin{equation}\label{algo34}
\lim_{k\to +\infty} \| \mu_k A_{{\mu}_k}\left( x_k \right) \|= 0.
\end{equation}
Suppose, for a moment, that this is true. Let $(x_{k_n})_n$ be a subsequence of $(x_k)$ which converges weakly to some $\bar{x}$. We want to prove that $\bar{x} \in S = A^{-1}(0)$. Since $\mu_k$ tends to infinity, we also have
$$\lim_{k\to +\infty} \|  A_{\mu_k}\left( x_k \right) \|= 0.$$
Passing to the limit in
$$
A_{{\mu_{k}}_n}\left( x_{k_n} \right)   \in A (x_{k_n} - \mu_{k_n} A_{\mu_{k_n}}\left( x_{k_n} \right) ),
$$
and using the demi-closedness of the graph of $A$, we obtain 
$$
0 \in A (\bar{x}).
$$
In other words, $\bar{x} \in S$. As a consequence, it only remains to prove (\ref{algo34}) in order to obtain ii) by Opial's Lemma. To this end, define
$$
\omega_k := \| \mu_k A_{\mu_k}\left( x_k \right) \|^2.
$$ 
We intend to prove that $\lim_{k\to+\infty}\omega_k=0$. Using (\ref{algo36-d}) and the definition of $\mu_k$, we deduce that
\begin{equation}\label{algo36}
\sum_k  \frac{1}{k}  \omega_k < + \infty.
\end{equation}
Therefore, if $\lim_{k\to+\infty}\omega_k$ exists, it must be zero. Since
$$\|\mu_k A_{\mu_k}\left( x_k \right) \|=
\|x_k -J_{\mu_kA}(x_k) \| \leq  \|x_k -z\|\le M+\|z\|,$$
we have
\begin{equation}\label{algo40-b}
|\omega_{k+1} -\omega_k | = 
\left| \|\mu_{k+1} A_{\mu_{k+1}}\left( x_{k+1} \right)\|^2 -\|\mu_k  A_{\mu_k}\left( x_k \right) \|^2 \right| 
\leq 2\big(M+\|z\|\big) \|\mu_{k+1} A_{\mu_{k+1}}\left( x_{k+1} \right) -\mu_k  A_{\mu_k}\left( x_k \right) \|.
\end{equation}
On the other hand, by Lemma \ref{L:basic-variation} we have 
\begin{align*}
\|\mu_{k+1} A_{\mu_{k+1}}\left( x_{k+1} \right) -\mu_k  A_{\mu_k}\left( x_k \right) \| &\leq 2 \| x_{k+1} - x_{k} \| + 2 \| x_{k+1}-z \|\frac{|\mu_{k+1} - \mu_{k} |}{\mu_{k+1}} \\
& \leq 2 \| x_{k+1} - x_{k} \| + 2 \| x_{k+1}-z \|\frac{|\lambda_{k+1} - \lambda_{k} |}{\lambda _{k+1}} \\
& \leq 2 \| x_{k+1} - x_{k} \|+ \frac{4(M+\|z\|)}{k+1}, 
\end{align*} 
by the definition of $\lambda_k$. Using (\ref{algo29}) and replacing the resulting inequality in (\ref{algo40-b}), we deduce that there is a constant $E\ge 0$ such that
$$|\omega_{k+1} -\omega_k |\le \frac{E}{k}.$$
But then
$$\sum_k  |(\omega_{k+1})^2 -(\omega_k)^2 |\le E\sum_k  \frac{1}{k}(\omega_{k+1}+\omega_k)<+\infty $$
by (\ref{algo36}). It follows that $\lim_{k\to+\infty}\omega_k^2$ exists and, since $\omega_k\ge 0$, $\lim_{k\to+\infty}\omega_k$ exists as well. This completes the proof of item ii).
Finally, item iii) follows from the fact that $\lim_k\|x_{k+1}-y_k\|=\lim_k\|s A_{\lambda_k + s}\left( y_k \right)\|=0$.
\end{proof}

\subsection{An application to convex-concave saddle value problems}
As shown by R.T. Rockafellar \cite{Rock1},  to each closed convex-convave function $L: X\times Y \to \bar{\mathbb R}$ acting on the product of two Hilbert spaces $X$ and $Y$ is associated a maximally monotone operator $M: X\times Y \rightrightarrows X\times Y $  which is given by $M= (\partial_{x} L, -\partial_{y} L)$.
This makes it possible to convert convex-concave saddle value problems into the search for the zeros of a maximally monotone operator, and thus to apply our results. Let's illustrate it in the case of   the convex constrained structured minimization problem 
$$
{\rm(P)} \quad \min \left\lbrace f(x) + g(y) : Ax-By =0 \right\rbrace ,
$$
where data satisfy the following assumptions:
\begin{itemize}
\item $X,Y,Z$ are real Hilbert spaces
\item  $f : X \to  \mathbb R \cup \{+\infty \}$ and 
$g: Y \to  \mathbb R \cup \left\lbrace +\infty \right\rbrace $  are closed convex proper functions.
\item $A : X \to Z$ and $B : Y \to Z$ are linear continuous operators.
\end{itemize}
Let us first reformulate (P) 
 as a saddle value problem
\begin{equation}\label{eq:saddle1}
\min_{(x,y)\in X \times Y} \max_{z\in Z }
\left\lbrace f(x) + g(y) + \left\langle z, Ax - By \right\rangle \right\rbrace.
\end{equation}
The   Lagrangian  $L : X \times Y \times Z \to  \mathbb R \cup \{+\infty \}$  associated to \eqref{eq:saddle1}
$$
L(x, y, z) = f(x) + g(y) + \left\langle z, Ax - By \right\rangle
$$
is a convex-concave  extended-real-valued function. 
The maximal monotone operator $ M:   X \times Y \times Z \rightrightarrows X \times Y \times Z$ that is associated to $L$ is given by
\begin{equation}\label{descrip1}
M(x, y, z) = \left( \partial_{x,y}L, -\partial_{z}L \right)(x, y, z) = \left(\partial f(x) + A^t z, \ \partial g(y) - B^t z, \ By- Ax \right).
\end{equation}
When the proximal algorithm is applied to the maximally monotone operator $M$, we obtain the so-called proximal method of multipliers. This method was initiated by  Rockafellar \cite{Rock2}. By combining this method with the alternating proximal minimization algorithm for weakly coupled minimization problems, a fully split method is obtained. This approach was successfully developed by Attouch and Soueycatt in \cite{AS}. Introducing inertial terms in this algorithm, as given by (RIPA),  is a subject of further study, which is part of the active research on the acceleration of the (ADMM) algorithms.

\section{Stability with respect to perturbations, errors} \label{Stab}
In this section, we discuss the stability of the  convergence results with respect to external perturbations. We first consider the continuous case, then the corresponding algorithmic results.
\subsection{The continuous case}\label{Stab-cont}
The continuous dynamics is now written in the following form
\begin{equation} \label{E:system-pert}
\ddot{x}(t)   + \frac{\alpha}{t}  \dot{x}(t)     +A_{\lambda(t)}(x(t)) = f(t), 
\end{equation}
where, depending on the context, $f$ can be interpreted as a source term, a perturbation, or an error. We suppose that $f$ is locally integrable to ensure existence and uniqueness for the corresponding Cauchy problem (see Lemma \ref{existence-uniqueness} in the Appendix).
Assuming that $f(t)$ tends to zero fast enough as $t \to + \infty$, we will see that the convergence results proved in Section \ref{S:minimizing} are still valid for the perturbed dynamics (\ref{E:system-pert}). Due to its similarity to the unperturbed case, we give the main lines of the proof, just highlighting the differences. 

\begin{Theorem}\label{T:weak_convergence-pert}
Let $A: \mathcal H \rightarrow 2^{\mathcal H}$ be  a maximally monotone operator such that  $S= A^{-1} (0)\neq \emptyset$. Let 
$x:[t_0,+\infty[\to\mathcal H$ be a solution of the continuous dynamic (\ref{E:system-pert}), where $\alpha>2$ and
$\lambda (t) = (1+\epsilon) \frac{t^2}{\alpha^2}$ with $\epsilon > \frac{4}{\alpha -2}$. Assume also that $\int_{t_0}^{+\infty} t^3 \|f(t)\|^2 dt < + \infty$ and  $\int_{t_0}^{+\infty} t \|f(t)\| dt < + \infty$. 
Then, $x(t)$ converges weakly, as $t\to+\infty$, to an element of $S$. Moreover $\|\dot{x}(t)\| = \mathcal O (1/t)$.
\end{Theorem}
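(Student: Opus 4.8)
The plan is to adapt, essentially verbatim, the Lyapunov argument developed for the unperturbed dynamics \eqref{E:system_bis}, carrying the source term $f(t)$ along and absorbing it using the two integrability hypotheses $\int_{t_0}^{+\infty} t^3\|f(t)\|^2\,dt<+\infty$ and $\int_{t_0}^{+\infty} t\|f(t)\|\,dt<+\infty$. First I would redo the computation of Lemma \ref{L:h_z}: with $z\in S$, the identity for $\ddot h_z$ now reads $\ddot h_z(t)+\frac{\alpha}{t}\dot h_z(t)+\langle A_{\lambda(t)}(x(t)),x(t)-z\rangle=\|\dot x(t)\|^2+\langle f(t),x(t)-z\rangle$, so using $\lambda(t)$-cocoercivity as before we get an inequality of the form
\begin{equation*}
\ddot h_z(t)+\frac{\alpha}{t}\dot h_z(t)+\lambda(t)\|A_{\lambda(t)}(x(t))\|^2\le \|\dot x(t)\|^2+\langle f(t),x(t)-z\rangle.
\end{equation*}
Then, substituting $A_{\lambda(t)}(x(t))=-\ddot x-\frac{\alpha}{t}\dot x+f(t)$ and expanding, I would obtain the analogue of Lemma \ref{L:h_z-2}, where the extra terms coming from $f$ are cross terms like $\lambda(t)\langle \ddot x+\frac{\alpha}{t}\dot x, f(t)\rangle$ and $\lambda(t)\|f(t)\|^2$; using $\lambda(t)\sim t^2$ these are controlled by $\frac12\lambda(t)\|\ddot x+\frac{\alpha}{t}\dot x\|^2$ plus a remainder of order $t^2\|f(t)\|^2$, which after multiplying by $t$ (as in Proposition \ref{P:system_properties}) is integrable precisely because of the $t^3\|f(t)\|^2$ hypothesis. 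This is why $\epsilon>\frac{4}{\alpha-2}$ rather than $\frac{2}{\alpha-2}$: we lose part of the coercivity margin $\epsilon$ (say half of it) to dominate the cross terms, so we need twice the original threshold.

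Next I would reprove the boundedness and speed decay. Integrating the perturbed version of \eqref{E:ineq_h_z-6} from $t_0$ to $t$, the right-hand side picks up a term $\int_{t_0}^t s\,|\langle f(s),x(s)-z\rangle|\,ds$; but this cannot be bounded a priori since boundedness of $x$ is not yet known. The standard fix is a Gronwall-type bootstrap: from the differential inequality one first derives $t\dot h_z(t)+(\alpha-1)h_z(t)\le C+\int_{t_0}^t s\|f(s)\|\,\|x(s)-z\|\,ds$, then using $\|x(s)-z\|\le\sqrt{2h_z(s)}+ {}$const and the hypothesis $\int t\|f(t)\|\,dt<+\infty$, a Gronwall lemma (Lemma \ref{basic-edo} or a direct argument) yields that $h_z$ is bounded; once $x$ is bounded by $M$, the term $\int s\|f(s)\|\,\|x(s)-z\|\,ds\le (M+\|z\|)\int s\|f(s)\|\,ds<+\infty$ is genuinely finite, and the rest of Proposition \ref{P:system_properties} goes through: $\sup_t t\|\dot x(t)\|<+\infty$, hence $\|\dot x(t)\|=\mathcal O(1/t)$, and $\int_{t_0}^{+\infty} t\|\dot x(t)\|^2\,dt<+\infty$. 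The estimate $\|\ddot x(t)\|=\mathcal O(1/t^2)$ also survives, using $\|f(t)\|\to 0$ (which follows from $t^3\|f(t)\|^2$ being integrable, at least along a subsequence, and more carefully from the combination of both hypotheses) together with \eqref{E:system-cor}.

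For the weak convergence itself I would follow the proof of Theorem \ref{T:weak_convergence} through Opial's Lemma \ref{Opial}. Existence of $\lim_{t\to\infty}\|x(t)-z\|$ for each $z\in S$ now comes from Lemma \ref{basic-edo} applied to the perturbed \eqref{E:ineq_h_z-13}, whose right-hand side is $t\|\dot x(t)\|^2 + t\,|\langle f(t),x(t)-z\rangle|\le t\|\dot x(t)\|^2 + (M+\|z\|)t\|f(t)\|$, both in $L^1(t_0,+\infty)$. The same lemma gives $\int_{t_0}^{+\infty} t\lambda(t)\|A_{\lambda(t)}(x(t))\|^2\,dt<+\infty$, hence \eqref{E:ineq_h_z-15}. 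For the key step \eqref{E:ineq_h_z-16}, i.e.\ $\lambda(t)A_{\lambda(t)}x(t)\to 0$, the estimate on $\frac{d}{dt}(\lambda(t)A_{\lambda(t)}x(t))$ from Lemma \ref{L:basic-variation} is unchanged (it only uses boundedness of $x$, $\|\dot x(t)\|=\mathcal O(1/t)$, and $|\dot\lambda(t)|/\lambda(t)=2/t$, all still valid), so Lemma \ref{L:basic-integration} applies exactly as before. Then the demi-closedness argument identifies every weak limit point as a zero of $A$, and Opial's Lemma concludes. I expect the main obstacle to be the bootstrap step establishing boundedness of the trajectory: in the unperturbed case boundedness was obtained \emph{before} using $\|x(s)-z\|\le M$, whereas here the perturbation term in the integrated inequality involves $\|x(s)-z\|$ itself, so one must run a Gronwall argument in the right order, and one must be careful that the coefficient $\epsilon>\frac{4}{\alpha-2}$ really does leave enough room after splitting $\epsilon = \epsilon_1+\epsilon_2$ to absorb both the $f$-cross-terms in the energy estimate and to keep $\epsilon_1-2\beta_1>0$ in the final integration.
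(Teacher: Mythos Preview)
Your proposal is correct and matches the paper's own proof almost step for step: the perturbed anchor inequality, the Young-type splitting of the cross term $2\beta t\|\dot x\|\|f\|$ that eats half of the $\epsilon g$ coefficient (whence the doubled threshold $\epsilon>\frac{4}{\alpha-2}$), the Gronwall bootstrap to get boundedness of $x$ before closing the estimates, and then the verbatim repetition of the unperturbed argument through Lemma~\ref{basic-edo}, \eqref{E:ineq_h_z-16}, and Opial's Lemma. The only superfluous point is your remark on $\|\ddot x(t)\|=\mathcal O(1/t^2)$: this is not asserted in Theorem~\ref{T:weak_convergence-pert}, and your justification via $\|f(t)\|\to 0$ does not follow from the stated integrability hypotheses, so simply drop that aside.
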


\begin{proof}
First, a similar computation as in Lemma \ref{L:h_z} gives
\begin{equation} \label{E:pert2}
\ddot{h}_z(t) + \frac{\alpha}{t} \dot{h}_z(t) + \lambda (t) \| A_{\lambda(t)}(x(t))  \|^2 \leq \|\dot{x}(t)\|^2 + \|x(t)-z\| \|f(t)\|.
\end{equation} 
Following the arguments in the proof of Lemma \ref{L:h_z-2}, we use (\ref{E:system-pert}), then develop and simplify (\ref{E:pert2}) to obtain 
\begin{equation} \label{E:pert4}
\ddot{h}_z(t) + \frac{\alpha}{t} \dot{h}_z(t) + \epsilon g(t)   + \beta t \dot{g}(t) \leq \|x(t)-z\| \|f(t)\| +
2\beta t\|\dot{x}(t)\| \|f(t)\|,
\end{equation}
where, as in the proof of Proposition \ref{P:system_properties}, we have set $g(t) = \|\dot{x}(t)\|^2$. Using the fact that
$$
2\beta t\|\dot{x}(t)\| \|f(t)\| \leq \frac{\epsilon}{2}\|\dot{x}(t)\|^2  + \frac{2\beta^2}{\epsilon}t^2 \| \|f(t)\|^2 ,
$$
and multiplying by $t$, we obtain
\begin{equation} \label{E:pert5}
t\ddot{h}_z(t) + \alpha \dot{h}_z(t) + \frac{\epsilon}{2} t  g(t)   + \beta t^2 \dot{g}(t)\leq  t\|x(t)-z\| \|f(t))\| +
\frac{2\beta^2}{\epsilon}t^3  \|f(t)\|^2 .
\end{equation}
Integration from $t_0$ to $t$ yields
\begin{equation} \label{E:pert7}
t\dot{h}_z(t) +(\alpha -1)h_z(t) + \beta t^2 g(t) + \left(\frac{\epsilon}{2} - 2 \beta\right)\int_{t_0}^t s g(s) ds \leq C + \int_{t_0}^t s\|x(s)-z\| \|f(s)\| ds + \frac{2\beta^2}{\epsilon} \int_{t_0}^t s^3  \|f(s)\|^2 ds
\end{equation}
for some positive constant $C$ that depends only on the initial data. In all that follows, $C$ is a generic notation for a constant.
By the definition $\beta= \frac{1 + \epsilon}{\alpha}$ and the assumptions on the parameters $\alpha$ and $\epsilon$, we see that $\frac{\epsilon}{2} - 2 \beta >0$. Taking into account also the hypothesis $\int_{t_0}^{+\infty} t^3 \|f(t)\|^2 dt < + \infty$, we deduce that
$$
t\dot{h}_z(t) +(\alpha -1)h_z(t)  \leq C + \int_{t_0}^t s\|x(s)-z\| \|f(s)\| ds .
$$	%
Multiply this expression by $t^{\alpha -2}$, integrate from $t_0$ to $t$, and use Fubini's Theorem to obtain
$$
\frac{1}{2}\|x(t)-z\|^2 \leq C  + \frac{1}{\alpha -1}\int_{t_0}^t   t\|x(t)-z\| \|f(t)\| dt .
$$
The main difference with Section \ref{S:minimizing} is here. We apply Gronwall's Lemma (see \cite[Lemma A.5]{Bre1}) to get
$$
\|x(t)-z\|\leq C  + 
\frac{1}{\alpha -1}\int_{t_0}^t   t \|f(t)\| dt .
$$
Since $\int_{t_0}^{+\infty} t \|f(t)\| dt < + \infty$, we deduce that the trajectory $x(\cdot)$ is bounded. The rest of the proof is essentially the same. First, we obtain 
$$
\sup_{t\geq t_0} t\|\dot{x}(t)\| < + \infty,
$$
by bounding that quantity between the roots of a quadratic expression. Then, we go back to (\ref{E:pert7}) to get that
$$
\int_{t_0}^{+ \infty} t \|\dot{x}(t)\|^2 dt < + \infty.
$$
We use Lemma \ref{basic-edo} to deduce that $\lim_{t\to + \infty} h_z (t)$ exists and
$$
\int_{t_0}^{+ \infty} t \lambda (t) \| A_{\lambda(t)}(x(t))  \|^2  dt < + \infty .
$$
The latter implies $\lim_{t\to+\infty}\lambda(t)\|A_{\lambda(t)}(x(t))\|=0$, and we conclude by means of Opial's Lemma \ref{Opial}.
\end{proof}

\subsection{The algorithmic case}\label{Stab-algo}
Let us first consider how the introduction of the external perturbation $f$ into the continuous dynamics modifies the corresponding algorithm.
Setting $f_k = f(kh)$, a discretization similar to that of the unperturbed case gives
$$\frac{1}{h^2}(x_{k+1} -2 x_{k}   + x_{k-1} ) +\frac{\alpha}{kh^2}( x_{k}  - x_{k-1})  + A_{\lambda_k} (x_{k+1})  = f_k.
$$
After expanding this expression, and setting $s=h^2$, we obtain 
$$
x_{k+1} + s A_{\lambda_k} (x_{k+1}) =  x_{k} + \left( 1- \frac{\alpha}{k}\right) ( x_{k}  - x_{k-1})+sf_k ,
$$
which gives 
$$
\left\{
\begin{array}{rcl}
y_k & =  & x_{k} + \left( 1- \frac{\alpha}{k}\right) ( x_{k}  - x_{k-1}) \\
\rule{0pt}{20pt}
x_{k+1} & = & \left(I + s A_{\lambda_k}\right)^{-1}  \left( y_k +sf_k \right). 
\end{array}\right.
$$
Using the resolvent equation (\ref{E:Y4}), we obtain  the Regularized  Inertial Proximal Algorithm with perturbation
$$
\mbox{\rm (RIPA-pert)} \qquad \left\{
\begin{array}{rcl}
y_k&= &  \displaystyle{x_{k} + \left( 1- \frac{\alpha}{k}\right)( x_{k}  - x_{k-1})} \\
\rule{0pt}{15pt}
x_{k+1} & = & \displaystyle{\frac{\lambda_k}{\lambda_k +s} (y_k + sf_k ) +\frac{s}{\lambda_k +s}J_{(\lambda_k +s)A}  \left( y_k +sf_k\right)}.
\end{array}\right.
$$
Setting  $\alpha_k = 1- \frac{\alpha}{k}$, and with help of the Yosida approximation, this can be written in a compact way as
\begin{equation} \label{E:RIPAb}
\left\{
\begin{array}{rcl}
y_k&= &  \displaystyle{x_{k} + \alpha_k( x_{k}  - x_{k-1})} \\
\rule{0pt}{20pt}
x_{k+1} &=& (y_k +sf_k) -s A_{\lambda_k + s}\left( y_k +sf_k\right).
\end{array}\right.
\end{equation} 
When $f_k=0$ we recover (RIPA). The convergence of $\mbox{\rm (RIPA-pert)}$ algorithm is analyzed in the following theorem. 

\begin{Theorem}\label{T:discrete-pert}
	Let $A: \mathcal H \rightarrow 2^{\mathcal H}$ be  a maximally monotone operator such that  $S= A^{-1} (0)\neq \emptyset$. 
	Let $(x_k)$ be a sequence generated by the algorithm $\mbox{\rm (RIPA-pert)}$
	where $\alpha>2$ and
	$$ \lambda_k  = (1+ \frac{s}{2}+ \epsilon)\frac{2s}{\alpha^2}k^2$$
	for some $\epsilon > \frac{2 +s}{\alpha -2}$ and all $k \geq 1$. Suppose that $\sum_k k\|f_k\| < + \infty$ and   
	$\sum_k k^3\|f_k\|^2 < + \infty$.  Then,
	\begin{itemize}
		\item [i)] The speed tends to zero. More precisely, $\| x_{k+1} - x_{k} \| = \mathcal O (\frac{1}{k})$ and $\sum_k  k\| x_{k} - x_{k-1} \|^2 < +\infty$.
		\item [ii)] The sequence $(x_k)$ converges weakly, as $k\to+\infty$, to some $\hat x\in S$.
		\item [iii)] The sequence $(y_k)$ converges weakly, as $k\to+\infty$, to $\hat x$.
	\end{itemize}
\end{Theorem}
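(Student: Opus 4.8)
The plan is to reproduce, step by step, the proof of Theorem~\ref{T:weak_convergence-discrete}, carrying the perturbation $sf_k$ through every estimate and absorbing it as the source term is absorbed in the proof of Theorem~\ref{T:weak_convergence-pert}; the one structural change is that boundedness of $(x_k)$ will now have to be obtained through a discrete Gronwall argument, instead of for free. Throughout, fix $z\in S$ and write $h_k=\tfrac12\|x_k-z\|^2$, $g_k=\|x_k-x_{k-1}\|^2$, $\alpha_k=1-\tfrac\alpha k$, $\mu_k=\lambda_k+s$, and $\tilde y_k=y_k+sf_k$, so that (RIPA-pert) reads $x_{k+1}=\tilde y_k-sA_{\mu_k}(\tilde y_k)$.

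First I would establish the perturbed analogue of~(\ref{algo15}). Applying Lemma~\ref{$h_k$ inequality} (valid for any sequence) and substituting $x_{k+1}-y_k=sf_k-sA_{\mu_k}(\tilde y_k)$, the term $\langle x_{k+1}-y_k,x_{k+1}-z\rangle$ splits into a part controlled, as in Lemma~\ref{$h_k$ inequality-2 }, by the $\mu_k$-cocoercivity of $A_{\mu_k}$ and the identity $A_{\mu_k}(z)=0$, plus a perturbation part bounded by $s\|f_k\|\,\|x_{k+1}-z\|$; passing from $\|A_{\mu_k}(\tilde y_k)\|^2$ to $\|x_{k+1}-y_k\|^2$ now costs a factor $\tfrac12$ (via $\|a-b\|^2\ge\tfrac12\|a\|^2-\|b\|^2$), which is exactly what the factor two and the $s$-shift in the prescribed $\lambda_k$ compensate. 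Recycling the recursive identity for $\|x_{k+1}-y_k\|^2$ from the proof of Lemma~\ref{$h_k$ inequality-3 } then yields an inequality of the form
$$(h_{k+1}-h_k)-\alpha_k(h_k-h_{k-1})+\epsilon g_k+\beta k(g_{k+1}-g_k)\le s\|f_k\|\,\|x_{k+1}-z\|+Ck^2\|f_k\|^2 ,$$
for a suitable $\beta>0$, in which $\alpha>2$ and $\epsilon>\tfrac{2+s}{\alpha-2}$ are precisely what keeps the coefficient of $\sum_{1}^{k}pg_p$ positive after multiplying by $k$ and summing, as in the derivation of~(\ref{algo22}). That summation gives
$$k(h_{k+1}-h_k)+(\alpha-1)h_k\le C+C\sum_{p=1}^{k}p\|f_p\|\,\|x_{p+1}-z\|+C\sum_{p=1}^{k}p^3\|f_p\|^2 ,$$
the last sum being finite by hypothesis; since $\alpha>2$, this reads $kh_{k+1}-(k-1)h_k\le C+C\sum_{p=1}^{k}p\|f_p\|\sqrt{2h_{p+1}}$, and telescoping followed by a discrete Gronwall inequality using $\sum_k k\|f_k\|<+\infty$ gives $\sup_k h_k<+\infty$. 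Hence $(x_k)$ is bounded; set $M=\sup_k\|x_k\|<+\infty$.

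With boundedness secured, $\|x_{p+1}-z\|\le M+\|z\|$, so every perturbation term above is summable and the remaining steps follow Theorem~\ref{T:weak_convergence-discrete} with only cosmetic changes. Bounding $|h_{k+1}-h_k|\le(M+\|z\|)\|x_{k+1}-x_k\|$ and isolating a quadratic in $k\|x_{k+1}-x_k\|$ gives $\|x_{k+1}-x_k\|=\mathcal O(1/k)$, while the positive coefficient of $\sum pg_p$ gives $\sum_k k\|x_k-x_{k-1}\|^2<+\infty$; this is~i). Feeding $\sum_k kg_k<+\infty$ and the summable perturbation terms into Lemma~\ref{diff-ineq-disc}, applied to the perturbed analogue of~(\ref{algo14}), gives the existence of $\lim_k\|x_k-z\|$ for each $z\in S$ together with $\sum_k k\lambda_k\|A_{\mu_k}(\tilde y_k)\|^2<+\infty$. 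Transferring from $\tilde y_k$ to $x_k$ through $\|x_k-\tilde y_k\|\le\|x_k-x_{k-1}\|+s\|f_k\|=\mathcal O(1/k)$ and the $\tfrac1{\mu_k}$-Lipschitz continuity of $A_{\mu_k}$ (using $\sum_k\tfrac1k\|f_k\|^2\le\sum_k k^3\|f_k\|^2<+\infty$) yields $\sum_k k\mu_k\|A_{\mu_k}(x_k)\|^2<+\infty$, hence $\sum_k\tfrac1k\|\mu_kA_{\mu_k}(x_k)\|^2<+\infty$. The variation estimate of Lemma~\ref{L:basic-variation}, used exactly as in the unperturbed proof, then forces $\lim_k\|\mu_kA_{\mu_k}(x_k)\|=0$; since $\mu_k\to+\infty$ also $\|A_{\mu_k}(x_k)\|\to0$, so every weak cluster point of $(x_k)$ lies in $S$ by demi-closedness of $A$, and Opial's Lemma~\ref{Opial-discrete} delivers~ii). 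Finally $\|x_{k+1}-y_k\|=\|sf_k-sA_{\mu_k}(\tilde y_k)\|\to0$, because $\|f_k\|\to0$ and $\|A_{\mu_k}(\tilde y_k)\|\to0$, which gives~iii).

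The one genuinely new point is the boundedness step: the perturbation cross-terms must be tracked through the summation carefully enough that the sign of the coefficient of $\sum_{1}^{k}pg_p$ is preserved --- this is what dictates both $\epsilon>\tfrac{2+s}{\alpha-2}$ and the rescaled value of $\lambda_k$ --- while the residual $\sum_p p\|f_p\|\,\|x_{p+1}-z\|$, which a priori involves the unknown sequence, is closed off by a discrete Gronwall inequality relying only on $\sum_k k\|f_k\|<+\infty$. Once boundedness is in hand, $\sum_k k^3\|f_k\|^2<+\infty$ absorbs the remaining quadratic errors, and the bookkeeping of Theorem~\ref{T:weak_convergence-discrete} carries over verbatim.
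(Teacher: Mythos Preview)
Your proposal is correct and follows essentially the same route as the paper: carry the perturbation $sf_k$ through Lemmas~\ref{$h_k$ inequality}--\ref{$h_k$ inequality-3 } (losing a factor $\tfrac12$ when passing from $\|A_{\mu_k}(\tilde y_k)\|^2$ to $\|x_{k+1}-y_k\|^2$, which is exactly what the rescaled $\lambda_k$ absorbs), sum, apply the discrete Gronwall lemma to get boundedness, and then replay Theorem~\ref{T:weak_convergence-discrete}. The only cosmetic difference is that you bound the cross term by $s\|f_k\|\,\|x_{k+1}-z\|$ directly, whereas the paper routes it through $\|y_k-z\|\le\|x_k-z\|+\|x_k-x_{k-1}\|$; this makes your needed condition on $\epsilon$ slightly \emph{weaker} than $\epsilon>\tfrac{2+s}{\alpha-2}$, so ``precisely'' is a mild overstatement, but the stated hypothesis is certainly sufficient and the argument goes through.
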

\begin{proof}
	Let us observe that the definitions of  $y_k$ and $h_k$ are  the same as in the unperturbed case. Hence, it is only when using the constitutive
	equation $x_{k+1} = (y_k +sf_k) -s A_{\lambda_k + s}\left( y_k +sf_k\right)$,  which contains the perturbation term, that changes occur in the proof. 
	Thus, the beginning of the proof and Lemma \ref{$h_k$ inequality} is still valid, which gives
	\begin{equation}\label{algo13b}
	(h_{k+1} - h_k )- \alpha_k (h_{k} - h_{k-1} ) - 
	\langle x_{k+1} - y_k , x_{k+1} - z \rangle + \frac{1}{2} \| x_{k+1} - y_k \|^2  = \frac{1}{2} (\alpha_k + {\alpha_k}^2 )\| x_{k} - x_{k-1} \|^2 .
	\end{equation}
	The next step, which corresponds to Lemma \ref{$h_k$ inequality-2 }, uses the constitutive equation. Let us  adapt  it to our situation.
	By (\ref{algo13b}) and  $x_{k+1} - y_k = s(f_k - A_{\lambda_k + s}\left( y_k +sf_k\right))$, it follows that
	\begin{align}\label{algo13c}
	(h_{k+1} - h_k )- &\alpha_k (h_{k} - h_{k-1} ) +s 
	\langle  A_{\lambda_k + s}\left( y_k +sf_k \right) -f_k, (y_k +sf_k -z)-s A_{\lambda_k + s}\left( y_k +sf_k\right) \rangle \nonumber \\
	&+ \frac{s^2}{2} \| A_{\lambda_k + s}\left( y_k +sf_k\right) -f_k\|^2  = \frac{1}{2} (\alpha_k + {\alpha_k}^2 )\| x_{k} - x_{k-1} \|^2 .
	\end{align}
	Since $z\in S$, we have $A_{\lambda_k + s}(z)=0  $. By the $(\lambda_k + s)$-cocoercivity property of $A_{\lambda_k + s}$, we deduce that
	\begin{equation*} 
	\langle  A_{\lambda_k + s}\left( y_k +sf_k\right) , y_k +sf_k-z\rangle
	\geq (\lambda_k + s) \| A_{\lambda_k + s}\left( y_k +sf_k \right) \|^2 .
	\end{equation*}
	Using the above inequality in (\ref{algo13c}), and after development and simplification, we obtain
	\begin{align}\label{basic-algo-pert-0}
	(h_{k+1} - h_k )- \alpha_k (h_{k} - h_{k-1} )& +s 
	(\lambda_k + \frac{s}{2} ) \|  A_{\lambda_k + s}\left( y_k +sf_k \right) \|^2  \nonumber \\
	&\leq \frac{s^2}{2}\| f_k \|^2 + s\langle   y_k -z , f_k\rangle + \frac{1}{2} (\alpha_k + {\alpha_k}^2 )\| x_{k} - x_{k-1} \|^2 .
	\end{align}
	From $ A_{\lambda_k + s}\left( y_k +sf_k \right)= \frac{1}{s}( y_k +sf_k -x_{k+1})$ it follows that
$$(h_{k+1} - h_k )- \alpha_k (h_{k} - h_{k-1} ) +\frac{1}{s} 
(\lambda_k + \frac{s}{2} ) \| (y_k -x_{k+1})+ sf_k \|^2  \leq \frac{s^2}{2}\| f_k \|^2 + s\langle   y_k -z , f_k\rangle + \frac{1}{2} (\alpha_k + {\alpha_k}^2 )\| x_{k} - x_{k-1} \|^2 .$$	
Using the elementary inequality $	\| y_k -x_{k+1} \|^2 \leq 2 \| (y_k -x_{k+1})+ sf_k \|^2  + 2\| sf_k \|^2$,	we deduce that
	\begin{align*}
	(h_{k+1} - h_k )- \alpha_k (h_{k} - h_{k-1} )& +\frac{1}{2s} 
	(\lambda_k + \frac{s}{2} ) \| x_{k+1}-y_k \|^2  \\
	&\leq \frac{s^2}{2}\| f_k \|^2 +s(\lambda_k + \frac{s}{2} )\| f_k \|^2 + s\langle   y_k -z , f_k\rangle + \frac{1}{2} (\alpha_k + {\alpha_k}^2 )\| x_{k} - x_{k-1} \|^2 .
	\end{align*}
	Since $\frac{1}{2} (\alpha_k + {\alpha_k}^2 ) \leq \alpha_k \leq 1$,  $s>0$, and by using Cauchy-Schwarz inequality, it follows that
	\begin{align}\label{algo-pert1}
	(h_{k+1} - h_k )- \alpha_k (h_{k} - h_{k-1} ) &+\frac{\lambda_k}{2s} 
	\| x_{k+1} -y_k  \|^2
	\leq s(s+\lambda_k)\| f_k \|^2 + s\| y_k -z\| \| f_k\|+ \| x_{k} - x_{k-1} \|^2 .
	\end{align}
	By the definition of $y_k$ and elementary inequalities we have
	$$
	\| y_k -z\| \| f_k\| \leq \| x_k -z\| \| f_k\| + \| x_k -x_{k-1}\| \| f_k\| \leq \| x_k -z\| \| f_k\| + \frac{1}{2}\| f_k\|^2 + \frac{1}{2}\| x_k -x_{k-1}\|^2 .
	$$
	Combining this inequality with (\ref{algo-pert1}) we obtain
\begin{equation} \label{algo-pert1b}
(h_{k+1} - h_k )- \alpha_k (h_{k} - h_{k-1} ) +\frac{\lambda_k}{2s} 
\| x_{k+1} -y_k  \|^2 \leq s(\frac{1}{2} +s+\lambda_k)\| f_k \|^2 + s\| x_k -z\| \| f_k\|+ (1+\frac{s}{2}) \| x_{k} - x_{k-1} \|^2 .
\end{equation} 
	Let us write $\| x_{k+1} - y_k  \|^2$  in a recursive form. The same computation as in Lemma \ref{$h_k$ inequality-3 } gives
	\begin{align*}
	\| x_{k+1} - y_k  \|^2 
	&=\left(1- \frac{\alpha}{k}\right)\| (x_{k+1} - x_{k}) - (x_{k} - x_{k-1}   )\|^2 + 
	\frac{\alpha^2}{k^2}\|x_{k}  - x_{k-1}  \|^2  + 
	\frac{\alpha}{k} \left(\| x_{k+1} - x_{k}\|^2  -  \| x_{k} - x_{k-1} \|^2  \right).
	\end{align*}
	Using this equality in (\ref{algo-pert1b}), and neglecting the nonnegative term $(1- \frac{\alpha}{k})\| (x_{k+1} - x_{k}) - (x_{k} - x_{k-1}   )\|^2$, we obtain
	\begin{align}\label{algo17b}
	(h_{k+1} - h_k )- \alpha_k (h_{k} - h_{k-1} )  &+ 
	\frac{\lambda_k \alpha^2}{2sk^2}  \|x_{k}  - x_{k-1}  \|^2   +  \frac{\lambda_k \alpha}{2sk}  \left( \| x_{k+1} - x_{k}\|^2  -  \| x_{k} - x_{k-1} \|^2  \right)
	\nonumber\\
	&\leq  s(\frac{1}{2} +s+\lambda_k)\| f_k \|^2 + s\| x_k -z\| \| f_k\|+ (1+\frac{s}{2}) \| x_{k} - x_{k-1} \|^2  .
	\end{align}
	Using $ \lambda_k  = (1+ \frac{s}{2}+ \epsilon)\frac{2s}{\alpha^2}k^2$ and the definition $\beta= \frac{1 + \frac{s}{2} +\epsilon}{\alpha}$, inequality (\ref{algo17b}) becomes 
	\begin{align}\label{algo18b}
	(h_{k+1} - h_k )- \alpha_k (h_{k} - h_{k-1} )  &+ 
	\epsilon  \|x_{k}  - x_{k-1}  \|^2   +  \beta k  \left( \| x_{k+1} - x_{k}\|^2  -  \| x_{k} - x_{k-1} \|^2  \right)
	\nonumber\\
	&\leq  s(\frac{1}{2} +s+\lambda_k)\| f_k \|^2 + s\| x_k -z\| \| f_k\|.
	\end{align}
	Setting $g_k:= \|x_{k}  - x_{k-1}  \|^2$, we have 
	$$
	(h_{k+1} - h_k )- \alpha_k (h_{k} - h_{k-1} )  + 
	\epsilon g_k   + \beta k\left( g_{k+1}-  g_k\right)
	\leq    s(\frac{1}{2} +s+\lambda_k)\| f_k \|^2 + s\| x_k -z\| \| f_k\|.
	$$
	Then, we multiply by $k$ to obtain
	$$
	k(h_{k+1} - h_k )- (k-\alpha) (h_{k} - h_{k-1} )  + 
	\epsilon kg_k   + \beta k^2\left( g_{k+1}-  g_k\right)  \leq  sk(\frac{1}{2} +s+\lambda_k)\| f_k \|^2 + sk\| x_k -z\| \| f_k\|.
	$$
	We now write these inequalities in a recursive form, in order to simplify their summation. We have
	\begin{align*}
	k(h_{k+1} - h_k )- (k-1) (h_{k} - h_{k-1} )  &+(\alpha -1) (h_{k} - h_{k-1} ) 
	+ \epsilon kg_k   + \beta k^2 g_{k+1}- \beta (k-1)^2 g_{k} -\beta (2k-1)g_k   \\
	& \leq sk(\frac{1}{2} +s+\lambda_k)\| f_k \|^2 + sk\| x_k -z\| \| f_k\|.
	\end{align*}
	Summing for $p=1,\dots, k$, we obtain
\begin{equation} \label{basic-algo-pert}
	k(h_{k+1} - h_k )  +(\alpha -1) h_{k} 
+ (\epsilon -2 \beta) \sum_1^k  pg_p   + \beta k^2 g_{k+1} +\beta \sum_1^k  g_p  \leq s\sum_1^k p\|x_p -z \| \|f_p\| + s\sum_1^k  p(\frac{1}{2} +s+\lambda_p)\| f_p \|^2.
\end{equation} 
Since $\beta= \frac{1 + \frac{s}{2} +\epsilon}{\alpha}$ with $\alpha >2$ and $\epsilon > \frac{2 +s}{\alpha -2}$, we have $\epsilon -2 \beta = \frac{\epsilon (\alpha -2) - (s+2)}{\alpha}>0$. Moreover by the definition of 	$\lambda_k$ and the assumption $\sum_k k^3\|f_k\|^2 < + \infty$, we have  $s\sum_1^k  p(\frac{1}{2} +s+\lambda_p)\| f_p \|^2 \leq C$ for some positive constant $C$. Whence
	\begin{equation} \label{algo23b}
	k(h_{k+1} - h_k )  +(\alpha -1) h_{k} 
	+ \beta k^2 g_{k+1} \\
	\leq C +\sum_1^k p\|x_p -z \| \|f_p\|
	\end{equation}
	for all $k\geq 1$. Since $\alpha >2$ and $h_k\ge 0$, (\ref{algo23b}) implies
	$$
	kh_{k+1} - (k-1)h_k    \leq C +\sum_1^k p\|x_p -z \| \|f_p\| .
	$$
	By summing the above inequalities, and applying Fubini's Theorem, we deduce that
	$$
	kh_{k+1}  \leq Ck +\sum_1^k  p\|x_p -z \| \|f_p\| (k-p) \leq  Ck  +k\sum_1^k p\|x_p -z \| \|f_p\| .
	$$
	Hence
	$$
	\frac{1}{2} \|x_{k+1} - z\|^2 \leq C +\sum_1^k  p\|x_p -z \| \|f_p\| (k-p) \leq  C  +\sum_1^k p\|x_p -z \| \|f_p\| .
	$$
	Applying the discrete form of the Gronwall's Lemma \ref{d-Gronwall}, and $\sum_k k\|f_k\| < + \infty$, we obtain 
	$\sup_k \ h_k < +\infty$. Therefore, the sequence $(x_k)$ is bounded. The remainder of the proof is pretty much as that of Theorem \ref{T:weak_convergence-discrete}.  We first derive
	\begin{equation} \label{algo29b}
	\sup_k  k\| x_{k+1} - x_{k} \| < +\infty.
	\end{equation}
	Then, we combine (\ref{basic-algo-pert}) with (\ref{algo29b}) to obtain
	\begin{equation} \label{algo30b}
	\sum_k  k\| x_{k} - x_{k-1} \|^2 < +\infty.
	\end{equation}
	Since $\alpha_k=1-\frac{\alpha}{k}\le 1$, inequalities (\ref{basic-algo-pert-0}) and (\ref{algo30b}) give
	$$
	(h_{k+1} - h_k)+s  
	(\lambda_k + \frac{s}{2} ) \|  A_{\lambda_k + s}\left( y_k \right) \|^2   \leq \left(1-\frac{\alpha}{k}\right)(h_{k} - h_{k-1}) +\theta_k 
	$$
	for all $k \geq 1$, and $\sum_{k \in \mathbb N} k\theta_k < +\infty$. Invoking Lemma \ref{diff-ineq-disc}, we deduce that $\lim_{k\to+\infty}h_k$ exists and
	\begin{equation}\label{algo32b}
	\sum_k k\lambda_k \|  A_{\lambda_k + s}\left( y_k \right) \|^2  < + \infty.
	\end{equation}
	We conclude using Opial's Lemma \ref{Opial-discrete} as in the unperturbed case.
\end{proof}

\begin{remark}
The perturbation can be interpreted either as a miscomputation of $y_k$ from the two previous iterates, or as an error due to the fact that the resolvent can be computed at a neighboring point $y_k+sf_k$, rather than $y_k$. Anyway, perturbations of order less than $\frac{1}{k^2}$ are admissible and the convergence properties are preserved.
\end{remark}

\section{Quadratic growth and strong convergence} \label{strong}
%
In this section, we examine the case of a maximally monotone operator  $A$ satisfying a quadratic growth property. More precisely, we assume that there is $\nu>0$ such that 
\begin{equation}\label{E:quad} 
\langle x^*,x-z\rangle\ge \nu\,\hbox{dist}(x,S)^2
\end{equation}
whenever $x^*\in Ax$ and $z\in S$. If $A$ is strongly monotone, then (\ref{E:quad}) holds and $S$ is a singleton. Another example is the subdifferential of a convex function $\Phi$ satisfying a quadratic error bound (see \cite{BNPS}). Indeed, 
$$\langle x^*,x-z\rangle\ge \Phi(x)-\min(\Phi)\ge\nu\,\hbox{dist}(x,S)^2$$
if $x^*\in\partial \Phi(x)$ and $z\in S=\hbox{argmin}(\Phi)$. A particular case is when $A=M^*M$, where $M$ is a bounded linear operator with closed range (if $\Phi(x)=\frac{1}{2}\|Mx\|^2$, then $\nabla\Phi(x)=M^*Mx$). We have,
$$\langle Ax,x-z\rangle =\|M(x-z)\|^2\ge\nu\,\hbox{dist}(x,\hbox{Ker}(M))^2$$
(see \cite[Exercise 2.14]{Bre}). 

We obtain the following convergence result:
\begin{Theorem}\label{T:quadratic growth}
Let $A: \mathcal H \rightarrow 2^{\mathcal H}$ be  a maximally monotone operator satisfying (\ref{E:quad}) for some $\nu>0$ and all $x\in \cH$ and $z\in S$. Let $x:[t_0,+\infty[\to\mathcal H$ be a solution of the continuous dynamic
	\begin{equation*}
	\ddot{x}(t)   + \frac{\alpha}{t}  \dot{x}(t)     +A_{\lambda(t)}(x(t)) = 0, 
	\end{equation*}
	where $\alpha>2$ and
	$$\lambda (t) = (1+\epsilon) \frac{t^2}{\alpha^2} 
	\ \mbox{ with } \ \epsilon > \frac{2}{\alpha -2}.$$
	Then, $\lim_{t\to+\infty}\hbox{dist}(x(t),S)=0$. If, moreover, $S=\{\bar z\}$, then $x(t)$ converges strongly to $\bar z$ as $t\to+\infty$.
\end{Theorem}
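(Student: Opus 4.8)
The plan is to exploit the decay estimates already proved for the unperturbed dynamic, together with a transfer of the quadratic growth condition \eqref{E:quad} to the resolvent of $A$. Observe first that the hypotheses on $\alpha$, $\epsilon$ and $\lambda(\cdot)$ are exactly those of Theorem~\ref{T:weak_convergence} and Proposition~\ref{P:system_properties}, so the trajectory $x(\cdot)$ is bounded, $\|A_{\lambda(t)}(x(t))\|\to 0$, and --- the estimate that does the real work --- $\|\lambda(t)A_{\lambda(t)}(x(t))\|\to 0$ as $t\to+\infty$ (this is \eqref{E:ineq_h_z-16}, established inside the proof of Theorem~\ref{T:weak_convergence}). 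Note also that $S=A^{-1}(0)$ is closed and convex, so the metric projection $\mbox{\rm proj}_S$ onto $S$ is well defined.

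Next I would introduce the resolvent point $\xi(t):=J_{\lambda(t)A}(x(t))=x(t)-\lambda(t)A_{\lambda(t)}(x(t))$. By the defining property of the Yosida approximation, $A_{\lambda(t)}(x(t))\in A(\xi(t))$, while $\|x(t)-\xi(t)\|=\|\lambda(t)A_{\lambda(t)}(x(t))\|\to 0$. Applying the growth inequality \eqref{E:quad} at the point $\xi(t)$, with $x^*=A_{\lambda(t)}(x(t))\in A(\xi(t))$ and with the choice $z=\mbox{\rm proj}_S(\xi(t))\in S$, gives
$$
\nu\,\hbox{dist}(\xi(t),S)^2 \le \langle A_{\lambda(t)}(x(t)),\ \xi(t)-\mbox{\rm proj}_S(\xi(t))\rangle \le \|A_{\lambda(t)}(x(t))\|\,\hbox{dist}(\xi(t),S)
$$
by Cauchy--Schwarz, hence $\hbox{dist}(\xi(t),S)\le \nu^{-1}\|A_{\lambda(t)}(x(t))\|$. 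Combining with the triangle inequality,
$$
\hbox{dist}(x(t),S)\le \|x(t)-\xi(t)\| + \hbox{dist}(\xi(t),S) \le \|\lambda(t)A_{\lambda(t)}(x(t))\| + \nu^{-1}\|A_{\lambda(t)}(x(t))\|,
$$
and both terms on the right-hand side tend to $0$. This yields $\lim_{t\to+\infty}\hbox{dist}(x(t),S)=0$. Finally, if $S=\{\bar z\}$, then $\hbox{dist}(x(t),S)=\|x(t)-\bar z\|$, so the above is precisely the statement that $x(t)\to\bar z$ strongly.

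The argument is short because the substantial analytic work is already contained in Proposition~\ref{P:system_properties} and in the proof of Theorem~\ref{T:weak_convergence}; in particular, no Opial or Lyapunov argument beyond what is already available is needed here. The one genuinely new point --- and the step one must be careful about --- is that the quadratic growth \eqref{E:quad} has to be read off at the resolvent point $\xi(t)$ rather than at $x(t)$ itself (since $A_{\lambda(t)}(x(t))$ belongs to $A(\xi(t))$, not to $A(x(t))$), after which the estimate is closed simply by taking $z$ to be the metric projection of $\xi(t)$ onto $S$. A completely parallel reasoning applies to the algorithm (RIPA), using the iterate decay from Theorem~\ref{T:weak_convergence-discrete}, but we do not pursue it here.
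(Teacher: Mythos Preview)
Your proof is correct and considerably more direct than the paper's. The paper estimates the inner product $\langle A_{\lambda(t)}(x(t)),x(t)-z\rangle$ from above and from below simultaneously: from below using \eqref{E:quad} at $J_{\lambda(t)A}(x(t))$ together with a Young-type expansion involving an auxiliary parameter $\zeta\in(0,1)$, and from above via Cauchy--Schwarz and another Young splitting; the parameter $\zeta$ is then tuned so that the coefficient of $\hbox{dist}(J_{\lambda(t)A}(x(t)),S)^2$ vanishes, leaving the quantitative bound $\hbox{dist}(x(t),S)^2\le C\,\lambda(t)\bigl(\lambda(t)+\tfrac{1}{2\nu}\bigr)\|A_{\lambda(t)}(x(t))\|^2$. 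Your route bypasses this optimisation entirely: by reading \eqref{E:quad} directly at the resolvent point $\xi(t)$ with $z=\mbox{\rm proj}_S(\xi(t))$, Cauchy--Schwarz already gives $\hbox{dist}(\xi(t),S)\le\nu^{-1}\|A_{\lambda(t)}(x(t))\|$, and the triangle inequality transfers this back to $x(t)$. What you lose is only the explicit quadratic constant; what you gain is a two-line argument. Both approaches ultimately rest on the same key estimate \eqref{E:ineq_h_z-16}.
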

\begin{proof}
First, fix $t\ge t_0$ and observe that
\begin{eqnarray*}
\langle A_{\lambda(t)}(x(t)),x(t)-z\rangle & = & \langle A_{\lambda(t)}(x(t)),J_{\lambda(t)A}(x(t))-z\rangle
+\langle A_{\lambda(t)}(x(t)),x(t)-J_{\lambda(t)A}(x(t))\rangle \\
& \ge & \nu\,\hbox{dist}(J_{\lambda(t)A}(x(t)),S)^2+\frac{1}{\lambda(t)}\|x(t)-J_{\lambda(t)A}(x(t))\|^2 \\
& \ge & \nu\,\hbox{dist}(J_{\lambda(t)A}(x(t)),S)^2+\frac{1}{\lambda(t)}\left[(1-\zeta)\|x(t)-z\|^2+\left(1-\frac{1}{\zeta}\right)\|z-J_{\lambda(t)A}(x(t))\|^2\right]
\end{eqnarray*}
for all $\zeta>0$ (we shall select a convenient value later on). In turn, the left-hand side satisfies
\begin{eqnarray*}
\langle A_{\lambda(t)}(x(t)),x(t)-z\rangle 
& \le & \|A_{\lambda(t)}(x(t))\|\,\|x(t)-z\| \\
& \le & \lambda(t)\|A_{\lambda(t)}(x(t))\|^2+\|A_{\lambda(t)}(x(t))\|\,\|J_{\lambda(t)A}(x(t))-z\| \\
& \le & \left(\lambda(t)+\frac{\zeta}{2\nu}\right)\|A_{\lambda(t)}(x(t))\|^2+\frac{\nu}{2\zeta}\|J_{\lambda(t)A}(x(t))-z\|^2.
\end{eqnarray*}
Since $\|x(t)-z\|\ge\hbox{dist}(x(t),S)$, by taking $z$ as the projection of $J_{\lambda(t)A}(x(t))$ onto $S$, and combining the last two inequalities, we obtain
$$\frac{(1-\zeta)}{\lambda(t)}\hbox{dist}(x(t),S)^2+\left(\nu-\frac{\nu}{2\zeta}+\frac{\zeta-1}{\zeta\lambda(t)}\right)\hbox{dist}(J_{\lambda(t)A}(x(t)),S)^2 \le \left(\lambda(t)+\frac{\zeta}{2\nu}\right)\|A_{\lambda(t)}(x(t))\|^2.$$
whenever $0<\zeta<1$. Set
$$\zeta=\frac{2+\lambda(t)\nu}{2(1+\lambda(t)\nu)}=\frac{1}{2}+\frac{1}{2(1+\lambda(t)\nu)},$$
so that
$$0<\frac{1}{2}<\zeta\le \frac{1}{2}+\frac{1}{2(1+\lambda(t_0)\nu)}<1,$$
and
$$\nu-\frac{\nu}{2\zeta}+\frac{\zeta-1}{\zeta\lambda(t)}=\frac{2\zeta\lambda(t)\nu-\lambda(t)\nu+2\zeta-2}{2\zeta\lambda(t)}=\frac{2\zeta(1+\lambda(t)\nu)-(2+\lambda(t)\nu)}{2\zeta\lambda(t)}=0.$$
It follows that
$$\frac{(1-\zeta)}{\lambda(t)}\hbox{dist}(x(t),S)^2\le \left(\lambda(t)+\frac{\zeta}{2\nu}\right)\|A_{\lambda(t)}(x(t))\|^2,$$
and so
$$\hbox{dist}(x(t),S)^2\le \left[\frac{2(1+\lambda(t_0)\nu)}{\lambda(t_0)\nu}\right]\,\lambda(t)\left(\lambda(t)+\frac{1}{2\nu}\right)\|A_{\lambda(t)}(x(t))\|^2.$$
The right-hand side goes to zero by (\ref{E:ineq_h_z-16}).
\end{proof}

A similar result holds for (RIPA), namely:
\begin{Theorem}\label{T:quadratic growth_algo}
Let $A: \mathcal H \rightarrow 2^{\mathcal H}$ be  a maximally monotone operator satisfying (\ref{E:quad}) for some $\nu>0$ and all $x\in \cH$ and $z\in S$. Let Let $(x_k)$ be a sequence generated by the algorithm $\mbox{\rm (RIPA)}$, where $\alpha>2$ and $\lambda_k= (1+\epsilon) \frac{sk^2}{\alpha^2}$ with $\epsilon > \frac{2}{\alpha -2}$. Then, $\lim_{k\to+\infty}\hbox{dist}(x_k,S)=0$. If, moreover, $S=\{\bar z\}$, then $x_k$ converges strongly to $\bar z$ as $k\to+\infty$.
\end{Theorem}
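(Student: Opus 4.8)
The plan is to transcribe, essentially line by line, the argument proving Theorem \ref{T:quadratic growth}, replacing the continuous-time Yosida parameter $\lambda(t)$ by $\mu_k:=\lambda_k+s$, the trajectory value $x(t)$ by the iterate $x_k$, and the limiting relation (\ref{E:ineq_h_z-16}) by its discrete counterpart (\ref{algo34}). This works because the computation in the continuous case exploits only structural facts about the Yosida regularization that are equally available here: the $\mu_k$-cocoercivity of $A_{\mu_k}$; the inclusion $A_{\mu_k}(x_k)\in A\big(J_{\mu_k A}(x_k)\big)$ combined with the identity $\|x_k-J_{\mu_k A}(x_k)\|=\mu_k\|A_{\mu_k}(x_k)\|$; the quadratic growth hypothesis (\ref{E:quad}), applied with $x^{*}=A_{\mu_k}(x_k)$, $x=J_{\mu_k A}(x_k)$ and $z=\mathrm{proj}_S\big(J_{\mu_k A}(x_k)\big)$; and the Cauchy--Schwarz and Young inequalities. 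None of these ingredients is specific to the continuous setting.

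First, I would bound $\langle A_{\mu_k}(x_k),x_k-z\rangle$ from below and from above exactly as in the proof of Theorem \ref{T:quadratic growth}, splitting $x_k-z=\big(J_{\mu_k A}(x_k)-z\big)+\big(x_k-J_{\mu_k A}(x_k)\big)$ and introducing a free parameter $\zeta_k\in(0,1)$ in the Young steps. Choosing
$$\zeta_k=\frac{1}{2}+\frac{1}{2(1+\mu_k\nu)}$$
cancels the coefficient of $\hbox{dist}\big(J_{\mu_k A}(x_k),S\big)^2$, and since $k\mapsto\mu_k$ is nondecreasing one has $\frac{1}{2}<\zeta_k\le\frac{1}{2}+\frac{1}{2(1+\mu_1\nu)}<1$. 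After rearranging, this yields the discrete analogue of the last displayed estimate in the proof of Theorem \ref{T:quadratic growth}:
$$\hbox{dist}(x_k,S)^2\le\left[\frac{2(1+\mu_1\nu)}{\mu_1\nu}\right]\,\mu_k\left(\mu_k+\frac{1}{2\nu}\right)\|A_{\mu_k}(x_k)\|^2 .$$

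It then remains to show that the right-hand side tends to zero. Writing $\mu_k\big(\mu_k+\frac{1}{2\nu}\big)\|A_{\mu_k}(x_k)\|^2=\|\mu_k A_{\mu_k}(x_k)\|^2+\frac{1}{2\nu\mu_k}\|\mu_k A_{\mu_k}(x_k)\|^2$, I would invoke (\ref{algo34}) from the proof of Theorem \ref{T:weak_convergence-discrete} --- legitimate here, since the hypotheses $\alpha>2$, $\lambda_k=(1+\epsilon)\frac{sk^2}{\alpha^2}$, $\epsilon>\frac{2}{\alpha-2}$ are precisely those of that theorem --- which gives $\lim_{k\to+\infty}\|\mu_k A_{\mu_k}(x_k)\|=0$; together with $\mu_k\to+\infty$ this forces $\hbox{dist}(x_k,S)\to0$. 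Finally, when $S=\{\bar z\}$ we have $\hbox{dist}(x_k,S)=\|x_k-\bar z\|$, hence $x_k\to\bar z$ strongly.

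The main obstacle is, honestly, not a conceptual one: the argument is a faithful discrete replica of the continuous proof, and the only genuinely delicate manipulation --- the choice of $\zeta_k$ that kills the spurious $\hbox{dist}\big(J_{\mu_k A}(x_k),S\big)^2$ term --- has already been carried out verbatim in the proof of Theorem \ref{T:quadratic growth}, so it may simply be quoted. The one point to check with care is that (\ref{algo34}) is indeed available under the stated assumptions, which it is.
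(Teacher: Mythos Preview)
Your proposal is correct and is precisely what the paper intends: the paper does not give a separate proof of Theorem \ref{T:quadratic growth_algo} but merely states that ``A similar result holds for (RIPA)'', leaving the discrete transcription of the proof of Theorem \ref{T:quadratic growth} to the reader. Your choice to work with $\mu_k=\lambda_k+s$ rather than $\lambda_k$ is the right one, since (\ref{algo34}) is stated for $\mu_k$, and your verification that the hypotheses of Theorem \ref{T:weak_convergence-discrete} are in force so that (\ref{algo34}) may be invoked is the only point that needed checking.
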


\section{Further conclusions from a keynote example} \label{rot}

Let us illustrate our results in the case where $\mathcal H = \mathbb R^2$, and $A$ is the counterclockwise rotation centered at the origin and with the angle $\frac{\pi}{2}$, namely $A(x,y)=(-y,x)$. This is a model situation for a maximally monotone operator that is not cocoercive. The linear operator $A$ is antisymmetric, that is, $\langle A(x,y),(x,y)\rangle=0$ for all $(x,y)\in\cH$.

\subsection{Critical parameters}
Our results are based on an appropriate tuning of the Yosida approximation parameter. Let us analyze the asymptotic behavior of the solution trajectories of the second-order differential equation
\begin{equation} \label{E:pi2}
\ddot{u}(t)   + \frac{\alpha}{t}  \dot{u}(t)     +A_{\lambda(t)}(u(t)) = 0,
\end{equation}
where $u(t)= (x(t), y(t))$.
Since $0$ is the unique zero of $A$, the question is to find the conditions on $\lambda (t)$ which ensure the convergence of $u(t)$ to zero. An elementary computation gives
$$
A_{\lambda}= \frac{1}{1+\lambda^2}
\left(\begin{array}{cc}
\lambda & -1 \\ 
1 & \lambda
\end{array} \right) .
$$ 
For easy computation, it is convenient to set $z(t)=x(t) +i y(t)$, and work with the equivalent  formulation of the problem in the Hilbert space $\mathcal H = \mathbb C$, equipped with the real Hilbert structure $\langle z_1, z_2\rangle=
Re (z_1 \bar{z_2})$. So, the operator and its Yosida approximation  are given respectively  by $Az =iz$ and $A_{\lambda}z = \frac{\lambda +i}{1 + \lambda^2}z$. Then (\ref{E:pi2}) becomes 
$$
\ddot{z}(t)   + \frac{\alpha}{t}  \dot{z}(t)     +\frac{\lambda +i}{1 + \lambda^2}z(t) = 0.
$$
Passing to the phase space $\mathbb C \times \mathbb C$,
and setting $Z(t)= \big(z(t),\dot z(t)\big)^T$, we obtain the first-order equivalent system
$$
\dot{Z}(t) + M(t) Z(t) =0 \quad
\mbox{ where }
M(t)= 
\left(\begin{array}{cc}
0 & -1 \\ 
\frac{\lambda(t) +i}{1 + \lambda(t)^2} & \frac{\alpha}{t}
\end{array} \right) . 
$$
The asymptotic behavior of the trajectories of this system can be  analyzed by examinating the eigenvalues of the matrix $M(t)$, which are given by
$$
\theta (t) = \frac{\alpha}{2t}\left\lbrace 1 \pm
\sqrt{1- \frac{4t^2}{\alpha^2}\frac{\lambda (t) +i}{1 + \lambda(t)^2} }\right\rbrace .
$$
Let us restrict ourselves to the case $\lambda(t)\sim t^p$. If $p>2$, the eigenvalues $\theta_+$ and $\theta_-$ satisfy
$$
\theta_+(t) \sim \frac{1}{t}\qquad\hbox{and}\qquad \theta_-(t)\sim \frac{1}{t^{p-1}}.
$$
Although the solutions of the differential equation $\dot{v}(t) +  \frac{1}{t}v(t)=0$ converge to $0$, those of $\dot{v}(t) +  \frac{1}{t^{p-1}}v(t)=0$ do not. Thus, to obtain the convergence results of our theorem, we are not allowed to let $\lambda(t)$ tend to infinity at a rate greater than $t^2$, which shows that $t^2$ is a critical size for $\lambda(t)$.

\subsection{A comparative illustration}

As an illustration, we depict solutions of some first- and second-order equations involving the rotation operator $A$, obtained using Scilab's {\tt ode} solver. In all cases, the initial condition at $t_0=1$ is $(10,10)$. For second-order equations, we take the initial velocity as $(0,0)$ in order not to force the system in any direction. When relevant, we take $\lambda(t)=(1+\epsilon)t^2/\alpha^2$ with $\alpha=10$ and $\epsilon=1+2(\alpha-2)^{-1}$. For the constant $\lambda$, we set $\lambda=10$. Table \ref{Tab1} shows the distance to the unique equilibrium $(\bar x,\bar y)=(0,0)$ at $t=100$.

\begin{table}[h]
\begin{center} 
\begin{tabular}{|c|c|c|} 
\hline 
Key & Differential Equation	& Distance to $(0,0)$ at $t=100$ \\
\hline 
(E1) & $\dot x(t)+Ax(t)=0$ & 14.141911 \\
\hline
(E2) & $\ddot x(t)+\frac{\alpha}{t}\dot x(t)+Ax(t)=0$ & 3.186e24 \\
\hline
(E3) & $\dot x(t)+A_{\lambda(t)}(x(t))=0$ & 0.0135184 \\
\hline
(E4) & $\dot x(t)+A_\lambda(x(t))=0$ & 0.0007827 \\
\hline
(E5) & $\ddot x(t)+\frac{\alpha}{t}\dot x(t)+A_{\lambda(t)}x(t)=0$ & 0.000323 \\ \hline 
\end{tabular}
\end{center} 
\caption{Distance to the unique equilibrium for a solution of each equation.} \label{Tab1}
\end{table}
Observe that the final position of the solution of (E5) is comparable to that of (E4), which is a first-order equation governed by the {\em strongly monotone} operator $A_\lambda$. Figure \ref{Fig1} shows the solutions to (E1) and (E2), which do not converge to $(0,0)$, while Figure \ref{Fig2} shows the convergent solutions, corresponding to equations (E3), (E4) and (E5), respectively.

\begin{figure}[h]
	\begin{center}
		\begin{tabular}{cc}
			\includegraphics[width=50mm]{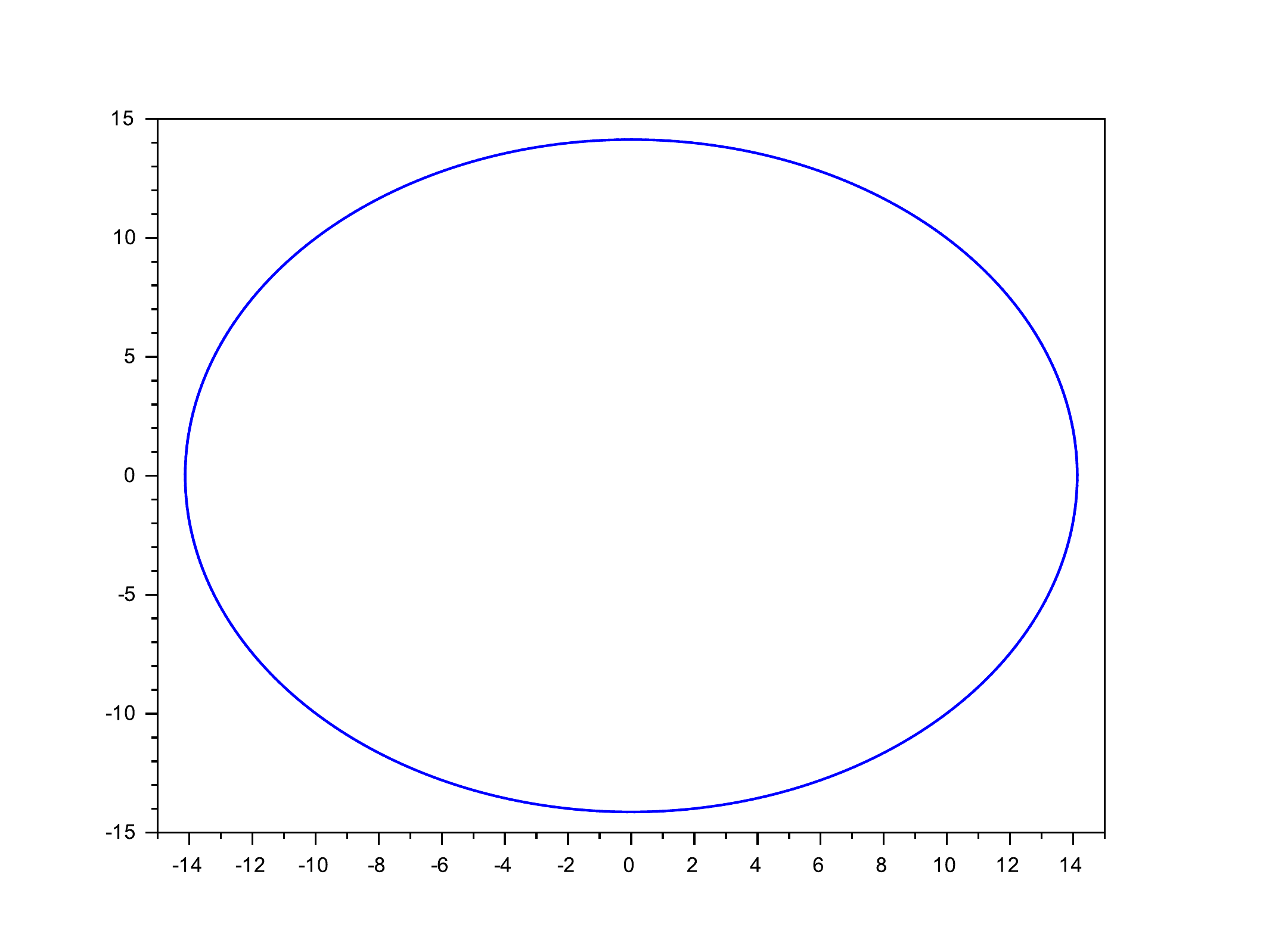} &
			\includegraphics[width=50mm]{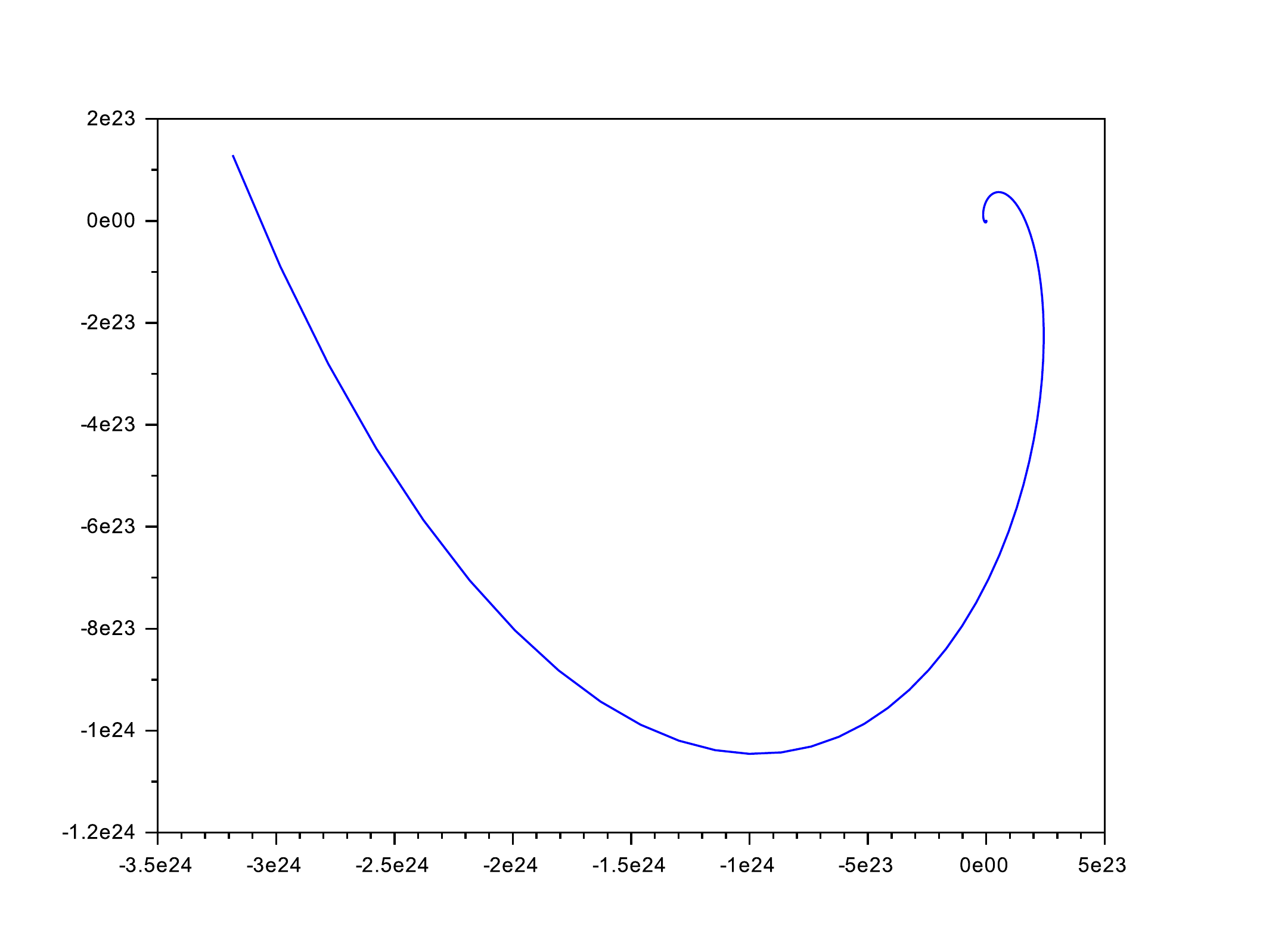}
		\end{tabular}
	\end{center}
	\caption{Solutions of (E1-left) and (E2-right).}
	\label{Fig1}
\end{figure}
\begin{figure}[h]
	\begin{center}
		\begin{tabular}{ccc}
			\includegraphics[width=50mm]{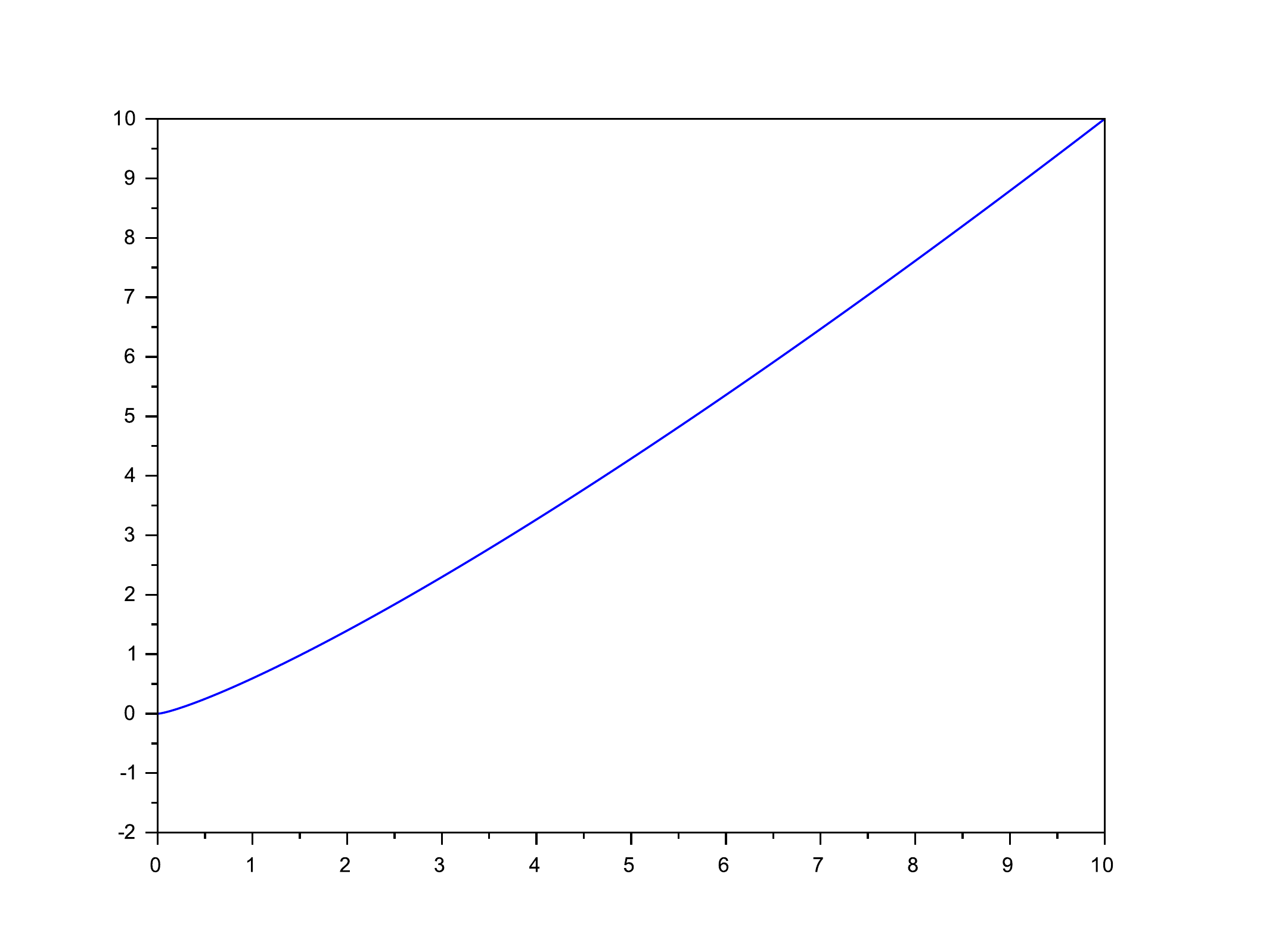} &
			\includegraphics[width=50mm]{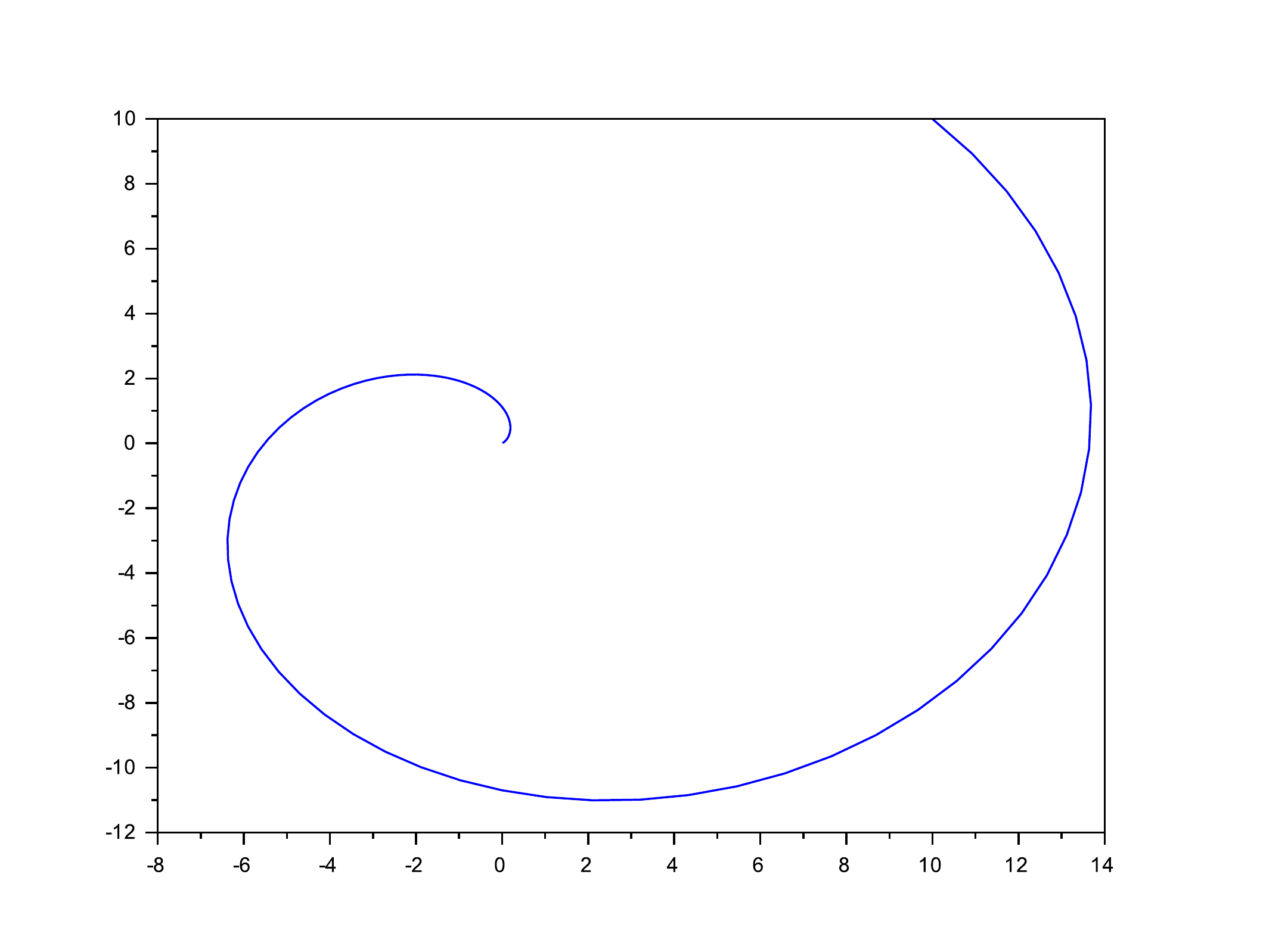} &
			\includegraphics[width=50mm]{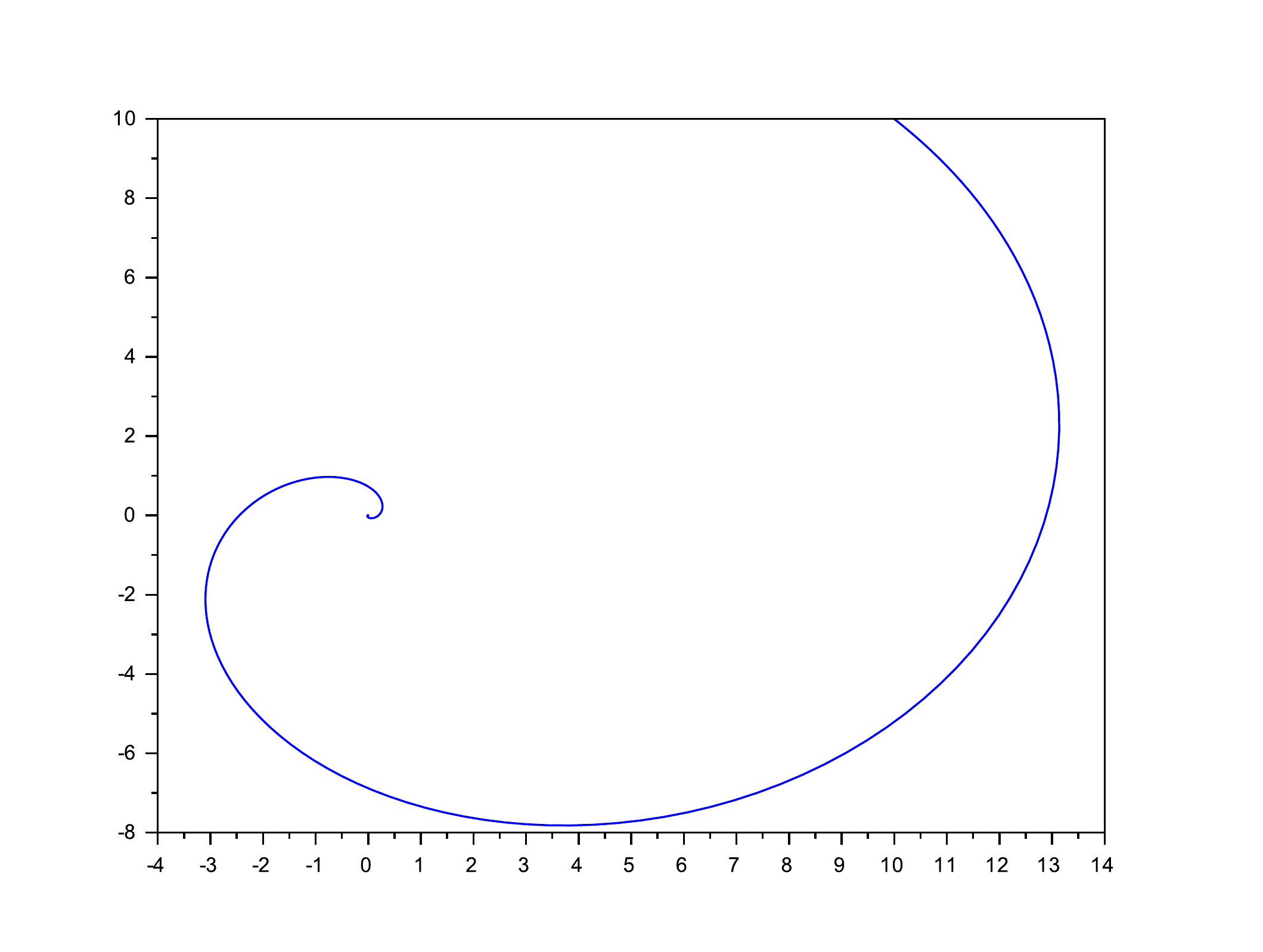}
		\end{tabular}
	\end{center}
	\caption{Solutions of (E3-left), (E4-middle) and (E5-right).}
	\label{Fig2}
\end{figure}

\noindent{\bf Acknowledgement:} The authors thank P. Redont for his careful and constructive reading of the paper.

\appendix
\section{Auxiliary results}

\subsection{Yosida regularization of an operator $A$} \label{SS:Yosida}
Given a maximally monotone operator $A$ and $\lambda >0$, the {\em resolvent} of $A$ with index $\lambda$ and the {\em Yosida regularization} of $A$ with parameter $\lambda$ are defined by
$$
J_{\lambda A} = \left( I + \lambda A \right)^{-1}\qquad\hbox{and}\qquad A_{\lambda} = \frac{1}{\lambda} \left( I- J_{\lambda A} \right),
$$
respectively. The operator  $J_{\lambda A}: \cH \to \cH$ is nonexpansive and eveywhere defined (indeed it is firmly non-expansive). Moreover, $A_{\lambda}$ is $\lambda$-cocoercive: for all $x, y \in \cH$ we have
$$
\langle  A_{\lambda}y -  A_{\lambda}x, y-x\rangle
\geq \lambda \| A_{\lambda}y -  A_{\lambda}x \|^2 .
$$
This property immediately implies that $A_{\lambda}: \cH \to \cH$
is $\frac{1}{\lambda}$-Lipschitz continuous. Another property that proves useful is the {\em resolvent equation} (see, for example, \cite[Proposition 2.6]{Bre1} or \cite[Proposition 23.6]{BC})
\begin{equation} \label{E:Y4}
(A_\lambda)_{\mu}= A_{(\lambda +\mu)},
\end{equation}
which is valid for any $\lambda, \mu >0$. This property allows to compute simply the resolvent of $A_\lambda$: for any $\lambda, \mu >0$ by
$$
J_{\mu A_\lambda} = \frac{\lambda}{\lambda + \mu}I + \frac{\mu}{\lambda + \mu}J_{(\lambda + \mu)A}.
$$
Also note that for any $x \in \cH$, and any $\lambda >0$
$$
A_\lambda (x) \in A (J_{\lambda A}x)= A( x - \lambda  A_\lambda (x)).
$$
Finally, for any $\lambda >0$, $A$ and $A_{\lambda}$ have the same solution set $S:=A_{\lambda}^{-1} (0) = A^{-1}(0)$. For a detailed presentation of the properties of the maximally monotone operators and the Yosida approximation, the reader can consult  \cite{BC} or \cite{Bre1}.

\subsection{Existence and uniqueness of solution in the presence of a source term}

Let us first establish the existence and uniqueness of the solution trajectory of the Cauchy problem associated to the continuous regularized dynamic (\ref{E:system}) with a source term.
\begin{lemma}\label{existence-uniqueness} Take $t_0>0$. Let us suppose that $\lambda : [t_0, +\infty[ \to \mathbb R^+$ is a measurable function such that $\lambda (t) \geq \underline{\lambda}$ for some $\underline{\lambda}>0$. Suppose that $f \in L^1 ([t_0, T], \cH)$ for all $T \geq t_0$. Then,  for any  $x_0 \in \mathcal H,  \ v_0 \in \mathcal H $, there exists  a unique strong global solution $x: [t_0,+\infty[ \to \cH$ of the Cauchy problem 
\begin{equation}\label{CL1}
		\left\{
		\begin{array}{rcl}
		&&\ddot{x}(t)   + \frac{\alpha}{t}  \dot{x}(t)     +A_{\lambda(t)}(x(t)) = f(t)\\
		\rule{0pt}{20pt}
		&& x(t_0)= x_0, \ \dot{x}(t_0)=v_0.
		\end{array}\right.
\end{equation}
\end{lemma}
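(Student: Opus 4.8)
The plan is to recast \eqref{CL1} as a first-order Carath\'eodory system in the product space $\cH\times\cH$, invoke the Cauchy--Lipschitz theorem for such systems to obtain a unique maximal solution, and then rule out finite-time blow-up by a Gr\"onwall estimate. Concretely, set $X(t)=\big(x(t),\dot x(t)\big)$, so that \eqref{CL1} is equivalent to $\dot X(t)=\mathcal F\big(t,X(t)\big)+\big(0,f(t)\big)$ with $X(t_0)=(x_0,v_0)$, where
\[
\mathcal F\big(t,(u,v)\big)=\Big(v,\ -\tfrac{\alpha}{t}\,v-A_{\lambda(t)}(u)\Big).
\]

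First I would verify that $\mathcal F$ meets the Carath\'eodory hypotheses on $[t_0,T]\times(\cH\times\cH)$ for every $T\ge t_0$. Measurability in $t$ for fixed $(u,v)$: the map $\mu\mapsto A_\mu(u)=\tfrac1\mu\big(u-J_{\mu A}u\big)$ is continuous on $(0,+\infty)$ for each fixed $u$ (continuous dependence of the resolvent on its index, see \cite{Bre1,BC}), so $t\mapsto A_{\lambda(t)}(u)$ is measurable, being the composition of a continuous function with the measurable function $\lambda(\cdot)$; the term $\tfrac{\alpha}{t}v$ is continuous. Lipschitz continuity in $(u,v)$ uniformly in $t$: since $A_{\lambda(t)}$ is $\tfrac1{\lambda(t)}$-Lipschitz with $\lambda(t)\ge\underline{\lambda}$, and $\tfrac{\alpha}{t}\le\tfrac{\alpha}{t_0}$, there is a constant $L$ depending only on $\alpha$, $t_0$ and $\underline{\lambda}$ such that $\|\mathcal F(t,Y_1)-\mathcal F(t,Y_2)\|\le L\,\|Y_1-Y_2\|$ for all $t\ge t_0$. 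A local integrable bound on $t\mapsto\mathcal F(t,0)$: choosing any $(a,b)\in\mathrm{gph}(A)$ (nonempty since $A$ is maximally monotone), one has $J_{\mu A}(a+\mu b)=a$, hence $\|J_{\mu A}(0)\|\le\|a+\mu b\|+\|a\|$ and therefore $\|A_\mu(0)\|=\tfrac1\mu\|J_{\mu A}(0)\|\le\tfrac{2\|a\|}{\underline{\lambda}}+\|b\|$; thus $\|\mathcal F(t,0)\|$ is bounded on $[t_0,+\infty[$, and adding the $L^1$ source $\big(0,f(t)\big)$ keeps the right-hand side locally integrable in $t$. By the Cauchy--Lipschitz theorem in the Carath\'eodory framework (see e.g. \cite{Bre1}), there is a unique maximal solution $X\in W^{1,1}_{\mathrm{loc}}\big([t_0,T_{\max});\cH\times\cH\big)$, that is, a unique maximal strong solution $x$ of \eqref{CL1}, with $x\in C^1$ and $\dot x$ absolutely continuous on bounded subintervals.

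It remains to prove $T_{\max}=+\infty$. On $[t_0,T_{\max})$, integrating the equation and using the estimates above,
\[
\|X(t)\|\le\|X(t_0)\|+\int_{t_0}^t\|\mathcal F(s,0)\|\,ds+\int_{t_0}^t\|f(s)\|\,ds+L\int_{t_0}^t\|X(s)\|\,ds.
\]
Since $s\mapsto\|\mathcal F(s,0)\|$ is bounded and $f\in L^1([t_0,T])$, the first three terms are dominated by a finite constant $C_T$ on each $[t_0,T]$, and Gr\"onwall's inequality gives $\|X(t)\|\le C_T\,e^{L(t-t_0)}$. Hence $X$ remains bounded on every bounded interval, which excludes blow-up in finite time; therefore $T_{\max}=+\infty$ and the solution is global.

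The only point requiring genuine care is the Carath\'eodory verification for $(t,u)\mapsto A_{\lambda(t)}(u)$, i.e.\ measurability in $t$ together with the uniform Lipschitz bound in $u$; once this is secured, both the local existence/uniqueness and the global extension are routine. We note that when $\lambda(\cdot)$ is continuous, as assumed in the main body of the paper, $\mathcal F$ is jointly continuous and Lipschitz in $(u,v)$ locally uniformly in $t$, so the classical Cauchy--Lipschitz theorem applies directly and the same Gr\"onwall argument yields global existence.
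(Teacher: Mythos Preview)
Your proof is correct and follows the same approach as the paper: both rewrite \eqref{CL1} as a first-order system in $\cH\times\cH$ and apply the non-autonomous Cauchy--Lipschitz theorem, using the $\tfrac{1}{\lambda}$-Lipschitz property of $A_\lambda$ together with $\lambda(t)\ge\underline{\lambda}$ to secure the Lipschitz hypothesis. The paper's argument is terser (it simply cites \cite[Proposition 6.2.1]{Haraux}), whereas you spell out the Carath\'eodory verification and add a Gr\"onwall step to rule out finite-time blow-up; this extra detail is sound but not essentially different.
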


\begin{proof}
The argument is standard, and consists in writing (\ref{CL1}) as a first-order system in the phase space. By setting 
		$$X(t)= 
		\begin{pmatrix}
		x(t) \\[3mm]
		\dot{x}(t)
		\end{pmatrix}, 
		\quad
		F (t,u,v)= 
		\begin{pmatrix}
		v \\[3mm]
		-\frac{\alpha}{t} v - A_{\lambda(t)}u + f(t)
		\end{pmatrix}
		\mbox{ and }
		X_0= 
		\begin{pmatrix}
		x_0 \\[3mm]
		v_0
		\end{pmatrix},
		$$ 
		the system can be written as
\begin{equation}\label{CL2}
		\left\{
		\begin{array}{rcl}
		&& \dot{X}(t) = F (t,X(t))\\
		\rule{0pt}{20pt}
		&& X(t_0)= X_0 .
		\end{array}\right.
		\end{equation}
		Using the $\frac{1}{\lambda}$-Lipschitz continuity property of $A_{\lambda}$, one can easily verify that the conditions of the Cauchy-Lipschitz theorem are satisfied. Precisely, we can apply the non-autonomous version of this theorem given in 
		\cite[Proposition 6.2.1]{Haraux}.
		Thus, we obtain a strong solution, that is, $t\mapsto\dot{x}(t)$ is locally absolutely continuous.
		If, moreover, we suppose that the functions $\lambda (\cdot)$ and $f$ are continuous, then the solution is a classical solution of class $\mathcal C^2$.
\end{proof}

\subsection{Opial's Lemma} The following results are often referred to as Opial's Lemma \cite{Op}. To our knowledge, it was first written in this form in Baillon's thesis. See \cite{Pey_Sor} for a proof.

\begin{lemma}\label{Opial} Let $S$ be a nonempty subset of $\mathcal H$ and let $x: [0,+\infty [ \to \mathcal H$. Assume that 
\begin{itemize}
\item [(i)] for every $z\in S$, $\lim_{t\to\infty}\|x(t)-z\|$ exists;
\item [(ii)] every weak sequential limit point of $x(t)$, as $t\to\infty$, belongs to $S$.
\end{itemize}
Then $x(t)$ converges weakly as $t\to\infty$ to a point in $S$.
\end{lemma}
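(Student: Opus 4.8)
The plan is to carry out the classical three-step argument for Opial's Lemma, using only elementary Hilbert-space algebra together with the weak sequential compactness of bounded sets.

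\textbf{Step 1 (boundedness).} Since $S\neq\emptyset$, I would pick any $z\in S$; by hypothesis (i) the function $t\mapsto\|x(t)-z\|$ converges as $t\to+\infty$, hence is bounded, so the trajectory $x(\cdot)$ remains in a bounded subset of $\mathcal H$. Because bounded sets in a Hilbert space are weakly sequentially relatively compact, $x(t)$ possesses at least one weak sequential limit point as $t\to+\infty$, and by (ii) every such point belongs to $S$.

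\textbf{Step 2 (uniqueness of the weak limit point).} Suppose $t_n\to+\infty$ and $s_m\to+\infty$ satisfy $x(t_n)\rightharpoonup x_1$ and $x(s_m)\rightharpoonup x_2$, so that $x_1,x_2\in S$ by (ii). For every $t$ one has the elementary identity
$$\|x(t)-x_1\|^2-\|x(t)-x_2\|^2=2\langle x(t),x_2-x_1\rangle+\|x_1\|^2-\|x_2\|^2.$$
By (i), applied to $z=x_1$ and to $z=x_2$, the left-hand side converges as $t\to+\infty$; hence $\langle x(t),x_2-x_1\rangle$ converges to some $\ell\in\R$. Evaluating this limit along $(t_n)$ gives $\ell=\langle x_1,x_2-x_1\rangle$, and along $(s_m)$ gives $\ell=\langle x_2,x_2-x_1\rangle$. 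Subtracting, $\langle x_1-x_2,x_2-x_1\rangle=0$, i.e. $\|x_1-x_2\|^2=0$, so $x_1=x_2$. Thus all weak sequential limit points of $x(t)$ coincide; I denote the common value by $\bar x\in S$.

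\textbf{Step 3 (weak convergence) and the main obstacle.} It remains to deduce that $x(t)\rightharpoonup\bar x$. If this failed, there would be a weakly open neighborhood $V$ of $\bar x$ and a sequence $t_k\to+\infty$ with $x(t_k)\notin V$ for all $k$; by Step 1 this sequence has a weakly convergent subsequence whose limit lies in the weakly closed (hence weakly sequentially closed) set $\mathcal H\setminus V$, but by Step 2 that limit must equal $\bar x\in V$, a contradiction. Hence $x(t)$ converges weakly to $\bar x\in S$. The only point needing genuine care is precisely this last step --- the passage from ``unique weak sequential cluster point'' to genuine weak convergence --- which is where reflexivity of $\mathcal H$ (equivalently, weak sequential compactness of bounded sets) is really used; everything else is routine.
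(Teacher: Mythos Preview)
Your proof is correct and follows the classical argument for Opial's Lemma: boundedness from (i), uniqueness of the weak sequential cluster point via the polarization-type identity, and then the standard compactness-plus-contradiction step to upgrade to full weak convergence. Each step is carried out cleanly; in particular, your handling of Step~3 (taking a basic weak neighborhood and extracting a subsequence whose weak limit would have to lie outside it) is the right way to close the gap between ``unique sequential cluster point'' and ``convergence''.

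As for comparison with the paper: the paper does not actually prove this lemma. It merely records the statement, attributes the formulation to Baillon's thesis, and refers the reader to \cite{Pey_Sor} for a proof. So there is nothing to compare against in the paper itself; your argument is exactly the standard one that the cited reference (and essentially any modern treatment) gives.
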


Its discrete version is

\begin{lemma}\label{Opial-discrete} Let $S$ be a non empty subset of $\mathcal H$,
and $(x_k)$ a sequence of elements of $\mathcal H$. Assume that 
\begin{itemize}
\item [(i)] for every $z\in S$, $\lim_{k\to+\infty}\|x_k-z\|$ exists;
\item [(ii)] every weak sequential limit point of $(x_k)$, as $k\to\infty$, belongs to $S$.
\end{itemize}
Then $x_k$ converges weakly as $k\to\infty$ to a point in $S$. 
\end{lemma}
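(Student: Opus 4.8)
The plan is to run the classical weak-compactness argument in Hilbert space. First I would note that hypothesis (i) forces $(x_k)$ to be bounded: fixing any single $z\in S$, the real sequence $\|x_k-z\|$ converges, hence is bounded, so $\sup_k\|x_k\|<+\infty$. Since $\mathcal H$ is a Hilbert space, bounded sequences are relatively weakly compact, so $(x_k)$ has at least one weak sequential limit point, and by (ii) every such point lies in $S$. The goal is then to prove uniqueness of this limit point and upgrade it to weak convergence of the whole sequence.

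The key step is uniqueness. Suppose $x_{k_n}\rightharpoonup z_1$ and $x_{m_j}\rightharpoonup z_2$ along two subsequences; by (ii) both $z_1,z_2\in S$, so by (i) the limits $\lim_k\|x_k-z_1\|^2$ and $\lim_k\|x_k-z_2\|^2$ exist. Expanding the norms,
$$\|x_k-z_1\|^2-\|x_k-z_2\|^2=2\langle x_k,z_2-z_1\rangle+\|z_1\|^2-\|z_2\|^2,$$
so the scalar sequence $\langle x_k,z_2-z_1\rangle$ converges, say to $\ell$. Passing to the limit along $(k_n)$ gives $\langle z_1,z_2-z_1\rangle=\ell$, and along $(m_j)$ gives $\langle z_2,z_2-z_1\rangle=\ell$; subtracting yields $\|z_2-z_1\|^2=0$, i.e.\ $z_1=z_2$. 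Hence all weak sequential limit points of $(x_k)$ coincide with a single $\bar x\in S$.

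Finally I would pass from ``unique weak cluster point'' to ``weak convergence of the whole sequence''. If $x_k$ did not converge weakly to $\bar x$, there would be a weak neighbourhood $V$ of $\bar x$ and a subsequence $(x_{k_n})$ with $x_{k_n}\notin V$ for all $n$; being bounded, $(x_{k_n})$ would admit a further subsequence converging weakly to some point, which by the previous paragraph must equal $\bar x$, contradicting $x_{k_n}\notin V$. Therefore $x_k\rightharpoonup\bar x\in S$. The only mild subtlety is this last step: it uses that bounded subsets of $\mathcal H$ are weakly sequentially compact (the weak topology being metrizable there), so that failure of weak convergence is always witnessed along a subsequence; everything else is the elementary expansion above.
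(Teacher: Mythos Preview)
Your proof is correct and is precisely the classical argument. The paper itself does not prove this lemma; it only refers the reader to \cite{Pey_Sor}, so there is nothing to compare on the level of strategy---your write-up is the standard one that would be found there.

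One small remark on the final paragraph: the parenthetical justification ``the weak topology being metrizable there'' is only valid when $\mathcal H$ is separable. However, your actual argument does not need metrizability at all---it only uses that bounded sequences in $\mathcal H$ admit weakly convergent subsequences, which holds in every Hilbert space by reflexivity (Eberlein--\v{S}mulian, or directly). So the step ``$(x_{k_n})$ bounded $\Rightarrow$ a further subsequence converges weakly'' is fine; just drop or amend the metrizability comment.
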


\subsection{Variation of the function $\gamma\mapsto \gamma A_{\gamma}x$}
	
	\begin{lemma} \label{L:basic-variation}
		Let $\gamma, \delta >0$, and $x, y\in \cH$. Then,  for each $z\in S= A^{-1} (0)$, and all $t \geq t_0$, we have
		\begin{equation} \label{E:basic-var1}
		\|\gamma A_{\gamma}x - \delta A_{\delta}y\| \leq 2 \| x-y \| + 2 \| x-z \|\frac{|\gamma - \delta |}{\gamma}
		\end{equation} 
	\end{lemma}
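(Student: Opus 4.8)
The plan is to reduce everything to two elementary facts about the resolvent $J_{\gamma A}=(I+\gamma A)^{-1}$: its nonexpansiveness, and the classical ``resolvent identity'' relating resolvents of different indices. First I would rewrite the quantities in terms of resolvents, using $\gamma A_\gamma x = x - J_{\gamma A}x$ and $\delta A_\delta y = y - J_{\delta A}y$, so that
\[
\gamma A_\gamma x - \delta A_\delta y = (x-y) - \big(J_{\gamma A}x - J_{\delta A}y\big).
\]
By the triangle inequality, it then suffices to prove $\|J_{\gamma A}x - J_{\delta A}y\|\le \|x-y\| + \tfrac{|\gamma-\delta|}{\gamma}\|\gamma A_\gamma x\|$ and the bound $\|\gamma A_\gamma x\|\le 2\|x-z\|$.

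For the first estimate, I would recall the resolvent identity: setting $u=J_{\gamma A}x$, one has $\tfrac{x-u}{\gamma}\in Au$, hence also $\tfrac{v-u}{\delta}\in Au$ with $v=\tfrac{\delta}{\gamma}x+\big(1-\tfrac{\delta}{\gamma}\big)u$, which means $u = J_{\delta A}(v)$; that is,
\[
J_{\gamma A}x = J_{\delta A}\!\left(\tfrac{\delta}{\gamma}\,x + \big(1-\tfrac{\delta}{\gamma}\big)J_{\gamma A}x\right) = J_{\delta A}\!\left((x-y) + y - \big(1-\tfrac{\delta}{\gamma}\big)(x-J_{\gamma A}x)\right).
\]
Since $J_{\delta A}$ is nonexpansive and $|1-\tfrac{\delta}{\gamma}| = \tfrac{|\gamma-\delta|}{\gamma}$, this yields
\[
\|J_{\gamma A}x - J_{\delta A}y\| \le \big\|(x-y) - \big(1-\tfrac{\delta}{\gamma}\big)(x-J_{\gamma A}x)\big\| \le \|x-y\| + \tfrac{|\gamma-\delta|}{\gamma}\,\|x-J_{\gamma A}x\|,
\]
and $\|x-J_{\gamma A}x\| = \|\gamma A_\gamma x\|$.

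For the second estimate, since $z\in S=A^{-1}(0)$ we have $J_{\gamma A}z=z$, so by nonexpansiveness of $J_{\gamma A}$,
\[
\|\gamma A_\gamma x\| = \|x - J_{\gamma A}x\| \le \|x-z\| + \|J_{\gamma A}z - J_{\gamma A}x\| \le 2\|x-z\|.
\]
Combining the three displays gives $\|\gamma A_\gamma x - \delta A_\delta y\|\le 2\|x-y\| + 2\|x-z\|\tfrac{|\gamma-\delta|}{\gamma}$, which is the claim. I do not expect a real obstacle here; the only point that needs care is the verification of the resolvent identity, which is the one-line computation from the definition of the resolvent indicated above. (Alternatively, one could invoke it directly from \cite[Proposition 23.6]{BC}.)
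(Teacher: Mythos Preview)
Your proof is correct and follows essentially the same approach as the paper: write $\gamma A_\gamma x-\delta A_\delta y=(x-y)-(J_{\gamma A}x-J_{\delta A}y)$, apply the resolvent identity $J_{\gamma A}x=J_{\delta A}\big(\tfrac{\delta}{\gamma}x+(1-\tfrac{\delta}{\gamma})J_{\gamma A}x\big)$ together with nonexpansiveness of $J_{\delta A}$, and then bound $\|x-J_{\gamma A}x\|\le 2\|x-z\|$ using $J_{\gamma A}z=z$. The only cosmetic difference is that you spell out the one-line verification of the resolvent identity, whereas the paper cites it from \cite[Proposition~23.28(i)]{BC}.
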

\begin{proof} We use successively the definition of the Yosida approximation, the  resolvent identity \cite[Proposition 23.28 (i)]{BC}, and the nonexpansive property of the resolvent, to obtain
\begin{align*}
		\|\gamma A_{\gamma}x - \delta A_{\delta}y\|&\leq \|x-y\| + \| J_{\gamma A}x  - J_{\delta A}y \|\\
		&=  \|x-y\| + \| J_{\delta A}\left( \frac{\delta}{\gamma}x  + \left(1- \frac{\delta}{\gamma}     \right)J_{\gamma A}x\right)     - J_{\delta A}y \|  \\
		& \leq \|x-y\| + \|  \frac{\delta}{\gamma}x  + \left(1- \frac{\delta}{\gamma}     \right)J_{\gamma A}x  -y\|\\
		& \leq 2\|x-y\| + |1- \frac{\delta}{\gamma}  | \| J_{\gamma A}x -x \|.
\end{align*}
Since $J_{\gamma A}z =z$ for $z\in S$, and using again the nonexpansive property of the resolvent, we deduce that 
\begin{align*}
		\|\gamma A_{\gamma}x - \delta A_{\delta}y\|
		& \leq 2\|x-y\| + |1- \frac{\delta}{\gamma}  | \| (J_{\gamma A}x -J_{\gamma A}z) + (z -x \|)\\
		& \leq 2\|x-y\| + 2 \| x-z \|\frac{|\gamma - \delta |}{\gamma},
\end{align*}
which gives the claim.
\end{proof}

\subsection{On integration and decay}
	
\begin{lemma} \label{L:basic-integration}
		Let $w,\eta:[t_0,+\infty[\to[0,+\infty[$ be absolutely continuous functions such that $\eta\notin L^1 (t_0, +\infty)$,
		$$\int_{t_0}^{+ \infty}  w(t)\,\eta(t)\,dt < + \infty,$$
		and $|\dot w(t)| \leq \eta(t)$  for almost every $t>t_0$. Then, 
		$\lim_{t\to+\infty} w(t) =0$.
\end{lemma}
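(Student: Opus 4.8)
The plan is to argue by contradiction, playing the integrability of $w\eta$ against the divergence of $\int\eta$. First I would record that $\liminf_{t\to+\infty}w(t)=0$: if instead $\liminf_{t\to+\infty}w(t)=\ell>0$, there would be $T\ge t_0$ with $w(t)\ge\ell$ for all $t\ge T$, and then
$$\int_{t_0}^{+\infty}w(t)\eta(t)\,dt\ \ge\ \ell\int_{T}^{+\infty}\eta(t)\,dt\ =\ +\infty,$$
since $\eta\ge 0$, $\eta\notin L^1(t_0,+\infty)$, and $\eta$ is integrable on the bounded interval $[t_0,T]$ (being absolutely continuous there); this contradicts the hypothesis. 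Hence it suffices to show that $\lim_{t\to+\infty}w(t)$ exists, for then it must equal $0$.

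Suppose the limit does not exist. Since $\liminf_{t\to+\infty}w(t)=0$, this means $\limsup_{t\to+\infty}w(t)>0$, so I may fix $L>0$ with $w(t)>\tfrac{2}{3}L$ for arbitrarily large $t$; on the other hand $\liminf_{t\to+\infty}w(t)=0$ gives arbitrarily large times at which $w<\tfrac{1}{3}L$. The core of the argument is then to extract infinitely many pairwise disjoint intervals on which $w$ stays bounded below and increases by a definite amount. Fix a time $b$ with $w(b)>\tfrac{2}{3}L$ preceded by a time where $w<\tfrac{1}{3}L$; using continuity of $w$ (absolute continuity implies continuity) and the intermediate value theorem, set $a:=\sup\{t\le b:\ w(t)=\tfrac{1}{3}L\}$. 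Then $a<b$, the supremum is attained so $w(a)=\tfrac{1}{3}L$, and $w(t)>\tfrac{1}{3}L$ on $(a,b]$. Consequently, on $[a,b]$,
$$\tfrac{1}{3}L\ =\ w(b)-w(a)\ =\ \int_a^b\dot w(t)\,dt\ \le\ \int_a^b|\dot w(t)|\,dt\ \le\ \int_a^b\eta(t)\,dt,$$
and therefore
$$\int_a^b w(t)\eta(t)\,dt\ \ge\ \tfrac{1}{3}L\int_a^b\eta(t)\,dt\ \ge\ \tfrac{1}{9}L^2.$$

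Since $\liminf_{t\to+\infty}w(t)=0$, once such an interval $[a,b]$ has been produced the same recipe applied to times after $b$ yields another one strictly to its right; iterating gives pairwise disjoint intervals $[a_n,b_n]\subset[t_0,+\infty[$ with $\int_{a_n}^{b_n}w\eta\ge\tfrac{1}{9}L^2$ for every $n$. Summing over $n$ forces $\int_{t_0}^{+\infty}w\eta=+\infty$, a contradiction. Hence $\limsup_{t\to+\infty}w(t)=\liminf_{t\to+\infty}w(t)=0$, that is, $\lim_{t\to+\infty}w(t)=0$.

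The main obstacle is the clean extraction of the oscillation intervals $[a_n,b_n]$: one must guarantee simultaneously that they are disjoint, that there are infinitely many, and that $w$ increases by a fixed amount ($\tfrac{1}{3}L$) while staying $\ge\tfrac{1}{3}L$ on each, which is exactly what feeds the lower bound $\tfrac{1}{9}L^2$. Everything else is elementary; the only other point worth flagging is that absolute continuity of $w$ is precisely what legitimizes writing $w(b)-w(a)=\int_a^b\dot w$.
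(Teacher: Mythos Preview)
Your argument is correct (modulo the harmless slip that $w(b)-w(a)>\tfrac13 L$, not $=\tfrac13 L$), but it takes a genuinely different route from the paper. The paper's proof is a one-line trick: since
\[
\left|\frac{d}{dt}w(t)^2\right|=2w(t)\,|\dot w(t)|\le 2\,w(t)\,\eta(t)\in L^1(t_0,+\infty),
\]
the function $w^2$ has integrable derivative, hence $\lim_{t\to+\infty}w(t)^2$ exists; nonnegativity of $w$ then gives existence of $\lim_{t\to+\infty}w(t)$, and your same $\liminf$ observation forces the limit to be $0$. What this buys is brevity and no need to construct the oscillation intervals $[a_n,b_n]$. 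Your approach, by contrast, avoids the $w^2$ device and argues the existence of the limit directly via a contradiction on the total variation contributed by infinitely many disjoint excursions; it is longer and requires the careful interval extraction you flagged, but it is entirely self-contained and makes transparent exactly where each hypothesis is used.
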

	
\begin{proof}
		First, for almost every $t>t_0$, we have
		$$
		\left|\frac{d}{dt} w^2(t)\right|= 2\left|\frac{d}{dt} w(t)\right| w(t) \leq 
		2w(t)\,\eta(t).
		$$
		Therefore, $|\frac{d}{dt} w^2|$ belongs to $L^1$. This implies that $\lim_{t\to+\infty} w^2(t) $ exists. Since $w$ is nonnegative, it follows that $\lim_{t\to+\infty} w(t) $ exists as well. But this limit is necessarily zero because $\eta\notin L^1$.
\end{proof}

\subsection{On boundedness and anchoring}

\begin{lemma} \label{basic-edo} 
Let $t_0>0$, and let $w: [t_0, +\infty[ \rightarrow \mathbb R$ be a continuously differentiable function which is bounded from below. Given a nonegative function $\theta$, let us assume that
\begin{equation}\label{basic-edo1}
t\ddot{w}(t) + \alpha \dot w(t) + \theta (t)\leq k(t),
\end{equation}
for some $\alpha > 1$, almost every $t>t_0$, and some nonnegative function $k\in L^1 (t_0, +\infty)$. Then, the positive part $[\dot w]_+$ of $\dot w$ belongs to $L^1(t_0,+\infty)$, and $\lim_{t\to+\infty}w(t)$ exists.
Moreover, we have  $\int_{t_0}^{+\infty} \theta (t) dt < + \infty$.
\end{lemma}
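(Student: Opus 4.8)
The plan is to introduce the auxiliary function $p(t) := t^{\alpha-1}\dot w(t)$, which is the natural integrating factor for the first-order-in-$\dot w$ operator $t\ddot w + \alpha\dot w$. Indeed, a direct computation gives $\frac{d}{dt}\big(t^{\alpha-1}\dot w(t)\big) = t^{\alpha-1}\ddot w(t) + (\alpha-1)t^{\alpha-2}\dot w(t) = t^{\alpha-2}\big(t\ddot w(t)+(\alpha-1)\dot w(t)\big)$, which is not quite the left-hand side of \eqref{basic-edo1}; so instead I would multiply \eqref{basic-edo1} by $t^{\alpha-2}$ and use $\frac{d}{dt}\big(t^{\alpha-1}\dot w(t)\big) + t^{\alpha-2}\dot{w}(t) = t^{\alpha-2}\big(t\ddot{w}(t)+\alpha\dot{w}(t)\big)$. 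Hmm — more carefully: $\frac{d}{dt}\big(t^{\alpha-1}\dot{w}\big)= t^{\alpha-1}\ddot{w} + (\alpha-1)t^{\alpha-2}\dot{w}$, and adding $t^{\alpha-2}\dot{w}$ yields $t^{\alpha-1}\ddot{w}+\alpha t^{\alpha-2}\dot{w} = t^{\alpha-2}(t\ddot{w}+\alpha\dot{w})$, as claimed. Hence multiplying \eqref{basic-edo1} by $t^{\alpha-2}$ gives
$$\frac{d}{dt}\big(t^{\alpha-1}\dot{w}(t)\big) + t^{\alpha-2}\dot{w}(t) + t^{\alpha-2}\theta(t) \le t^{\alpha-2}k(t).$$
The term $t^{\alpha-2}\dot{w}(t)$ is the only awkward one; I would handle it by a further integration by parts after integrating the whole inequality.

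The key steps, in order, are: (1) integrate the displayed inequality from $t_0$ to $t$; the term $\int_{t_0}^t s^{\alpha-2}\dot{w}(s)\,ds$ integrates by parts to $t^{\alpha-2}w(t) - t_0^{\alpha-2}w(t_0) - (\alpha-2)\int_{t_0}^t s^{\alpha-3}w(s)\,ds$, so after rearranging one obtains a bound of the form $t^{\alpha-1}\dot{w}(t) + t^{\alpha-2}w(t) \le C + (\alpha-2)\int_{t_0}^t s^{\alpha-3}w(s)\,ds$, using $k\in L^1$ and $\theta\ge 0$ (and I should be mildly careful about the sign of $\alpha-2$; if $1<\alpha\le 2$ the integral term has a favourable sign and is even easier, while for $\alpha>2$ one proceeds as follows). (2) Since $w$ is bounded below, write $w = v - m$ with $v\ge 0$ and $m$ a lower bound; then deduce that $\frac{d}{dt}\big(t^{\alpha-1}\dot{w}(t)\big) \le t^{\alpha-2}k(t) - t^{\alpha-2}\theta(t) - t^{\alpha-2}\dot{w}(t)$, and after a Gronwall-type argument controlling $\int s^{\alpha-3}w(s)\,ds$ by $w$'s boundedness — actually cleaner: divide by $t^{\alpha-1}$ and integrate again. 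Concretely, from step (1), $\dot{w}(t) \le \frac{C}{t^{\alpha-1}} - \frac{w(t)}{t} + \frac{\alpha-2}{t^{\alpha-1}}\int_{t_0}^t s^{\alpha-3}w(s)\,ds + \frac{1}{t^{\alpha-1}}\int_{t_0}^t s^{\alpha-2}k(s)\,ds$, and a bootstrap shows $w$ is bounded, hence so is the last running integral; this yields $[\dot{w}]_+\in L^1$. (3) Once $[\dot{w}]_+\in L^1$, the function $t\mapsto w(t) - \int_{t_0}^t[\dot{w}(s)]_+\,ds$ is nonincreasing and bounded below, hence converges; adding back the $L^1$ correction, $\lim_{t\to\infty}w(t)$ exists. (4) Finally, return to \eqref{basic-edo1} in the form $t\ddot{w}+\alpha\dot{w} = \frac{d}{dt}(t\dot{w}) + (\alpha-1)\dot{w}$: integrating $\theta \le k - \frac{d}{dt}(t\dot{w})/t - \ldots$ is delicate because of the factor $1/t$; a cleaner route is to integrate $t^{\alpha-2}\theta(t) \le t^{\alpha-2}k(t) - \frac{d}{dt}(t^{\alpha-1}\dot{w}) - t^{\alpha-2}\dot{w}$ and, using that the right-hand side is the derivative of something bounded plus an $L^1$ function, conclude $\int t^{\alpha-2}\theta(t)\,dt<\infty$, which in particular (since $t^{\alpha-2}\ge t_0^{\alpha-2}$ is bounded below by a positive constant, as $\alpha>1$ and $t\ge t_0$) gives $\int_{t_0}^{+\infty}\theta(t)\,dt<+\infty$.

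I expect the main obstacle to be the bookkeeping around the term $t^{\alpha-2}\dot{w}(t)$ (equivalently, the running integral $\int s^{\alpha-3}w(s)\,ds$) produced by the integrating factor: one must show this does not destroy the estimate, which requires first establishing boundedness of $w$ via a Gronwall/bootstrap argument and only then extracting $[\dot{w}]_+\in L^1$. The interplay between ``$w$ bounded below'' (all that is assumed) and the need to treat $w$ as essentially nonnegative for the monotonicity argument in step (3) is the other point requiring care, but it is handled simply by shifting $w$ by a constant, which changes neither $\dot{w}$ nor $\ddot{w}$.
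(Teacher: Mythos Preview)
You have identified the right mechanism---an integrating factor turning $t\ddot w+\alpha\dot w$ into an exact derivative---but you chose the wrong power. You set $p(t)=t^{\alpha-1}\dot w(t)$, found that its derivative produces $t\ddot w+(\alpha-1)\dot w$ rather than $t\ddot w+\alpha\dot w$, and then committed to multiplying \eqref{basic-edo1} by $t^{\alpha-2}$ and carrying the leftover term $t^{\alpha-2}\dot w(t)$ through the whole argument. But one more power of $t$ fixes everything: with $p(t)=t^{\alpha}\dot w(t)$ one has
\[
\frac{d}{dt}\big(t^{\alpha}\dot w(t)\big)=t^{\alpha}\ddot w(t)+\alpha t^{\alpha-1}\dot w(t)=t^{\alpha-1}\big(t\ddot w(t)+\alpha\dot w(t)\big),
\]
so multiplying \eqref{basic-edo1} by $t^{\alpha-1}$ yields $\frac{d}{dt}(t^{\alpha}\dot w)+t^{\alpha-1}\theta\le t^{\alpha-1}k$ exactly, with no residual term. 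Integrating once and dividing by $t^{\alpha}$ gives a direct bound on $[\dot w]_+(t)$ by $C t^{-\alpha}+t^{-\alpha}\int_{t_0}^t s^{\alpha-1}k(s)\,ds$; a Fubini computation then shows $[\dot w]_+\in L^1$ without any bootstrap on $w$, and the same inequality, integrated again, gives $\int\theta<\infty$. This is the paper's route, and it is substantially shorter than your plan because the awkward term never appears.

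Your proposal, as written, also contains two genuine gaps. First, the ``bootstrap shows $w$ is bounded'' step is not justified: you only know $w$ is bounded \emph{below}, and your displayed inequality for $\dot w(t)$ involves $\int_{t_0}^t s^{\alpha-3}w(s)\,ds$, which you cannot control without already knowing $w$ is bounded above---so the argument is circular unless you supply a real Gronwall inequality, and it is not clear one is available here in the right direction. Second, in your step (4) you claim that $t^{\alpha-2}\ge t_0^{\alpha-2}$ ``since $\alpha>1$''; this is false for $1<\alpha<2$, where $t^{\alpha-2}\to 0$, so $\int t^{\alpha-2}\theta(t)\,dt<\infty$ does \emph{not} imply $\int\theta(t)\,dt<\infty$ in that range. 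Both problems disappear once you use the correct multiplier $t^{\alpha-1}$.
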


\begin{proof} 
Multiply \eqref{basic-edo1} by $t^{\alpha-1}$ to obtain
$$
\frac{d}{dt} \big(t^{\alpha} \dot w(t)\big)+ t^{\alpha-1} \theta (t)\leq t^{\alpha-1} k (t).
$$
By integration, we obtain
\begin{equation}\label{basic-ineq1}
\dot w(t) +  \frac{1}{t^{\alpha} }  \int_{t_0}^t  s^{\alpha-1}\theta (s)ds \leq \frac{{t_0}^{\alpha}|\dot w(t_0)|}{t^{\alpha} }  +  \frac{1}{t^{\alpha} }  \int_{t_0}^t  s^{\alpha-1}k (s)ds.
\end{equation}
Hence, 
$$
[\dot w]_{+}(t)  \leq \frac{{t_0}^{\alpha}|\dot w(t_0)|}{t^{\alpha} }  +  \frac{1}{t^{\alpha} }  \int_{t_0}^t  s^{\alpha-1} k(s)ds,
$$
and so,
$$
\int_{t_0}^{\infty} [\dot w]_{+}(t)  dt \leq \frac{{t_0}^{\alpha}|\dot w(t_0)|}{(\alpha -1) t_0^{\alpha -1}} +
  \int_{t_0}^{\infty}\frac{1}{t^{\alpha}}  \left(  \int_{t_0}^t  s^{\alpha-1} k(s) ds\right)  dt.
$$
Applying Fubini's Theorem, we deduce that
$$
\int_{t_0}^{\infty}\frac{1}{t^{\alpha}}  \left(  \int_{t_0}^t  s^{\alpha-1} k(s) ds\right)  dt =     
\int_{t_0}^{\infty}  \left(  \int_{s}^{\infty}  \frac{1}{t^{\alpha}} dt\right) s^{\alpha-1} k(s) ds 
= \frac{1}{\alpha -1} \int_{t_0}^{\infty}k(s) ds.
$$
As a consequence,
$$
 \int_{t_0}^{\infty} [\dot w]_{+}(t) dt \leq \frac{{t_0}^{\alpha}|\dot w(t_0)  |}{(\alpha -1) t_0^{\alpha -1}}  +
 \frac{1}{\alpha -1} \int_{t_0}^{\infty}k(s)  ds < + \infty.
$$
This implies $\lim_{t\to+\infty}w(t)$ exists. Back to (\ref{basic-ineq1}), integrating from $t_0$ to $t$, using Fubini's Theorem again, and then letting $t$ tend to $+\infty$, we obtain
$$
\lim_{t\to +\infty} w(t) - w(t_0) + \frac{1}{\alpha -1} \int_{t_0}^{\infty}\theta (s)  ds \leq \frac{{t_0}^{\alpha}|\dot w(t_0)  |}{(\alpha -1) t_0^{\alpha -1}}  + \frac{1}{\alpha -1} \int_{t_0}^{\infty}k(s)  ds < + \infty.
$$
Hence $\int_{t_0}^{\infty}\theta (s)  ds < + \infty$.
\end{proof}

\subsection{A summability result for real sequences}

\begin{lemma}\label{diff-ineq-disc} 
Let $\alpha >1$, and let $(h_k)$ be a sequence of real numbers which is bounded from below, and such that
\begin{equation} \label{E:summability}
(h_{k+1} - h_{k}) -  \left(1- \frac{\alpha}{k}\right)    (h_{k} - h_{k-1}) + \omega_k \leq  \theta_k
\end{equation} 
for all $k\geq 1$. 
Suppose that $(\omega_k)$, and  $(\theta_k)$ are two sequences of nonnegative numbers, such  that $\sum_k k\theta_{k} <+\infty$.
Then 
$$\sum_{k \in \mathbb N} [h_k - h_{k-1}  ]_{+} < +\infty \quad \mbox{ and } \quad  \sum_{k \in \mathbb N} k\omega_k < +\infty.
$$
\end{lemma}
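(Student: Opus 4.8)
The plan is to reduce the statement to a first-order difference inequality for the increments $d_k:=h_k-h_{k-1}$ ($k\ge1$): discarding the nonnegative term $-\omega_k$, hypothesis (\ref{E:summability}) gives $d_{k+1}\le\big(1-\tfrac{\alpha}{k}\big)d_k+\theta_k$, which is the discrete analogue of the inequality $\frac{d}{dt}(t^{\alpha}\dot w)\le t^{\alpha-1}k(t)$ used in the proof of Lemma \ref{basic-edo}. The natural tools are a discrete integrating factor together with a summation-by-parts/Fubini interchange.

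\emph{Step 1: $\sum_k[h_k-h_{k-1}]_+<\infty$.} Let $n_0$ be the smallest integer with $n_0>\alpha$, so that $1-\tfrac{\alpha}{k}>0$ for $k\ge n_0$. A short case distinction ($d_k\ge0$ versus $d_k<0$) shows that $e_k:=[d_k]_+$ satisfies $e_{k+1}\le\big(1-\tfrac{\alpha}{k}\big)e_k+\theta_k$ for all $k\ge n_0$. Define $p_k$, $k\ge n_0-1$, by $p_{n_0-1}=1$ and $p_k=\tfrac{k}{k-\alpha}\,p_{k-1}$; then $p_k\big(1-\tfrac{\alpha}{k}\big)=p_{k-1}$, and — comparing $\log p_k$ with $\alpha\log k$, or using the asymptotics of $\Gamma(k+1)/\Gamma(k+1-\alpha)$ — there are constants $0<c_1\le c_2$ with $c_1k^{\alpha}\le p_k\le c_2k^{\alpha}$. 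Multiplying the recursion for $e_k$ by $p_k$ and telescoping from $n_0$ to $K-1$ gives $e_K\le\tfrac{p_{n_0-1}e_{n_0}}{p_{K-1}}+\tfrac{1}{p_{K-1}}\sum_{k=n_0}^{K-1}p_k\theta_k$. Summing over $K$ and interchanging the order of summation turns the double sum into $\sum_{k\ge n_0}p_k\theta_k\sum_{J\ge k}p_J^{-1}$; since $\alpha>1$ we have $\sum_{J\ge k}p_J^{-1}\le c_3k^{1-\alpha}$, so this is bounded by $c_2c_3\sum_kk\theta_k<\infty$, while $\sum_K p_{K-1}^{-1}<\infty$ controls the remaining term. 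Adding the finitely many terms $e_1,\dots,e_{n_0}$ yields $\sum_k[d_k]_+<\infty$.

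\emph{Step 2: the weighted sum of the $\omega_k$.} Since $h_K-h_0=\sum_{k=1}^Kd_k\le\sum_k[d_k]_+<\infty$ and $(h_k)$ is bounded below, say by $m$, we get $\sum_{k=1}^K[d_k]_-=\sum_{k=1}^K[d_k]_+-(h_K-h_0)\le\sum_k[d_k]_++h_0-m<\infty$, hence $\sum_k|d_k|<\infty$ and $(h_k)$ is bounded. Now multiply (\ref{E:summability}) by $k$ and rewrite it as $kd_{k+1}-(k-1)d_k\le-(\alpha-1)d_k-k\omega_k+k\theta_k$; summing from $1$ to $K$, the left-hand side telescopes to $Kd_{K+1}$ and $\sum_{k=1}^Kd_k=h_K-h_0$, so
$$\sum_{k=1}^Kk\omega_k+Kd_{K+1}+(\alpha-1)h_K\le(\alpha-1)h_0+\sum_{k=1}^Kk\theta_k\le C.$$
If $\sum_kk\omega_k=\infty$, then (as $(\alpha-1)h_K$ is bounded) $Kd_{K+1}\to-\infty$, so $|d_{K+1}|\ge1/K$ for all large $K$ and $\sum_k|d_k|=\infty$, contradicting the first assertion of this step; hence $\sum_kk\omega_k<\infty$.

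The main obstacle lies in Step 1: unlike in the continuous Lemma \ref{basic-edo}, the integrating factor $p_k\sim k^{\alpha}$ grows fast enough that $\sum_kp_k\theta_k$ need not be finite, so $p_{K-1}e_K$ cannot be bounded term by term; summability of $\sum_k[d_k]_+$ is recovered only after summing over $K$, where the decay $\sum_{J\ge k}p_J^{-1}\sim\mathrm{const}\cdot k^{1-\alpha}$ exactly balances the growth of $p_k$ against the weight $k$ in the hypothesis $\sum_kk\theta_k<\infty$. Establishing the two-sided bound $c_1k^{\alpha}\le p_k\le c_2k^{\alpha}$, and dealing with the finitely many indices $k\le n_0$ where $1-\tfrac{\alpha}{k}$ may be $\le0$, are minor but necessary technical points.
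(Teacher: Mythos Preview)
Your proof is correct, but the organization differs from the paper's in an interesting way: you and the paper essentially swap the ``hard'' and ``easy'' arguments between the two steps.

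For Step 1, the paper avoids the integrating factor entirely. From $b_{k+1}\le(1-\tfrac{\alpha}{k})b_k+\theta_k$ (with $b_k=[h_k-h_{k-1}]_+$), it simply multiplies by $k$ and rewrites as $(\alpha-1)b_k\le (k-1)b_k-kb_{k+1}+k\theta_k$; summing over $k=1,\dots,K$ the first two right-hand terms telescope to $-Kb_{K+1}\le 0$, giving $(\alpha-1)\sum_{k=1}^K b_k\le\sum_{k=1}^K k\theta_k$ in one line. Your route via $p_k\sim k^{\alpha}$ and a Fubini interchange works, but is the discrete analogue of the continuous Lemma \ref{basic-edo} rather than the shortcut available here.

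For Step 2 the situation is reversed. The paper iterates the full recursion $\delta_{k+1}+\omega_k\le\alpha_k\delta_k+\theta_k$, obtaining products $A_{i+1}^k=\prod_{j=i+1}^k(1-\tfrac{\alpha}{j})$, then sums over $k$, applies Fubini, and invokes two-sided estimates $(i/k)^{\alpha}\le A_{i+1}^k\le((i+1)/(k+1))^{\alpha}$ together with $\sum_{k\ge i}A_{i+1}^k\asymp i/(\alpha-1)$. Your argument is more elementary: once $\sum_k[d_k]_+<\infty$ is known, boundedness below of $(h_k)$ forces $\sum_k|d_k|<\infty$; then the same multiply-by-$k$-and-telescope identity yields $\sum_{k\le K}k\omega_k+Kd_{K+1}+(\alpha-1)h_K\le C$, and divergence of $\sum k\omega_k$ would give $|d_{K+1}|>1/K$ eventually, contradicting $\sum|d_k|<\infty$. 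This contradiction argument is shorter and uses only Step 1 plus the lower bound on $(h_k)$, with no product estimates needed.

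In short: both proofs are valid; the paper's Step 1 is slicker than yours, while your Step 2 is slicker than the paper's.
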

 
\begin{proof} Since $(\omega_k)$ is nonegative, we have
$$
(h_{k+1} - h_{k}) -  \left(1- \frac{\alpha}{k}\right)    (h_{k} - h_{k-1})  \leq  \theta_k.
$$
Setting $b_k := [h_k - h_{k-1}  ]_{+}$ the positive part of $h_k - h_{k-1}$, we immediately infer that
$$
b_{k+1} 
\leq  \left(1- \frac{\alpha}{k}\right)b_k + 
\theta_k
$$
for all $k\geq 1$. Multiplying by $k$ and rearranging the terms, we obtain
$$(\alpha-1)b_k\le(k-1)b_k-kb_{k+1}+k\theta_k.$$
Summing for $k=1,\dots, K$, and using the telescopic property, along with the fact that $Kb_{K+1}\ge 0$, we deduce that
$$(\alpha-1)\sum_{k=1}^Kb_k\le \sum_{k=1}^Kk\theta_k,$$
which gives
$$\sum_{k \in \mathbb N} [h_k - h_{k-1}  ]_{+} < +\infty.$$
Let us now prove that $\sum_{k \in \mathbb N} k\omega_k < +\infty$, which is the most delicate part of the proof. To this end, write $\delta_k= h_{k} - h_{k-1}$, and $\alpha_k =\left(1- \frac{\alpha}{k}\right)$, so that \eqref{E:summability} becomes
$$\delta_{k+1} + \omega_k \leq  \alpha_k\delta_k+\theta_k.$$
An immediate recurrence (it can be easily seen by induction) shows that
\begin{equation*}
\delta_{k+1}  +\sum_{i=1}^k\left[\left(\prod_{j=i+1}^k \alpha_j\right)\omega_i\right]\leq \left(\prod_{j=1}^k \alpha_j\right) \delta_1+\sum_{i=1}^k\left[\left(\prod_{j=i+1}^k \alpha_j\right)\theta_i\right],
\end{equation*}
with the convention $\prod_{j=k+1}^k \alpha_j=1$. To simplify the notation, write $A_{i}^k=\prod_{j=i}^k \alpha_j$. Sum the above inequality for $k=1,\dots, K$ to  deduce that
\begin{equation}\label{Lemma A7-1}
h_{K+1}-h_1 +\sum_{k=1}^{K}\sum_{i=1}^kA_{i+1}^k\omega_i \leq \delta_1\sum_{k=1}^{K}A_1^k +\sum_{k=1}^{K}\sum_{i=1}^kA_{i+1}^k\theta_i.
\end{equation}
Now, using Fubini's Theorem, we obtain
\begin{equation} \label{E:summability_Fubini}
h_{K+1}-h_1 +\sum_{i=1}^{K}\left[\omega_i\sum_{k=i}^KA_{i+1}^k\right] \leq \delta_1\sum_{k=1}^{K}A_{1}^k +\sum_{i=1}^{K}\left[\theta_i\sum_{k=i}^KA_{i+1}^k\right].
\end{equation} 
Simple computations (using integrals in the estimations) show that
$$\left(\frac{i}{k}\right)^{\alpha}\le A_{i+1}^k\le\left(\frac{i+1}{k+1}\right)^{\alpha},$$
and
$$\frac{i}{\alpha-1}\le \sum_{k=i}^\infty A_{i+1}^k\le \frac{i}{\alpha-1}\left(\frac{i+1}{i}\right)^{\alpha}$$
(see also \cite{AC2} for further details). Letting $K\to+\infty$ in \eqref{E:summability_Fubini}, we deduce that
$$\sum_{i=1}^{\infty}i\omega_i\le C+D\sum_{i=1}^\infty i\theta_i<+\infty$$
for appropriate constants $C$ and $D$.
\end{proof}

\subsection{A discrete Gronwall lemma}

\begin{lemma} \label{d-Gronwall} 
	Let $c\ge 0$ and let $(a_k)$ and $(\beta_j )$ be nonnegative sequences such that $(\beta_j )$ is summable and
	$$
	a_k^2 \leq c^2 + \sum_{j=1}^k \beta_j  a_j
	$$
	for all $k\in\mathbb N$. Then, $\displaystyle a_k \leq c + \sum_{j=1}^{\infty} \beta_j$ for all $k\in\mathbb N$.
\end{lemma}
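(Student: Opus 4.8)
The plan is to run the classical bootstrap for Gronwall-type inequalities, but take square roots so that the resulting recursion telescopes directly. Introduce the partial sums $A_k := c^2 + \sum_{j=1}^k \beta_j a_j$, so that the hypothesis reads $a_k^2 \le A_k$, set $s_k := \sqrt{A_k}$, and note that $A_0 = c^2$, hence $s_0 = c$ (using $c\ge 0$). Two elementary observations are recorded first: since $a_k\ge 0$ and $a_k^2 \le A_k = s_k^2$, we have $a_k \le s_k$ for every $k$; and, directly from the definition, $s_k^2 - s_{k-1}^2 = A_k - A_{k-1} = \beta_k a_k$ for all $k\ge 1$.

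Combining these, and using $\beta_k\ge 0$ together with $a_k\le s_k$, I would obtain $s_k^2 - \beta_k s_k - s_{k-1}^2 \le 0$. Reading this as a quadratic inequality in $s_k\ge 0$ (whose smaller root is nonpositive), it forces
\[
s_k \le \frac{\beta_k + \sqrt{\beta_k^2 + 4 s_{k-1}^2}}{2}.
\]
Applying the elementary inequality $\sqrt{u^2+v^2}\le u+v$ for $u,v\ge 0$ with $u=\beta_k$ and $v=2s_{k-1}$ then gives $s_k \le s_{k-1} + \beta_k$. Summing from $1$ to $k$ and telescoping yields $s_k \le s_0 + \sum_{j=1}^k \beta_j = c + \sum_{j=1}^k \beta_j$, and since $(\beta_j)$ is summable and nonnegative this is at most $c + \sum_{j=1}^{\infty}\beta_j$. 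Because $a_k\le s_k$, the stated conclusion follows for all $k$.

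There is no serious obstacle here; this is a routine lemma. The only points deserving a word of care are the correct solution of the quadratic inequality for $s_k$ and the observation that no strict positivity of the $A_k$ (equivalently, no regularization of $c$ by a small $\varepsilon>0$) is actually needed: the relation $s_k^2 \le \beta_k s_k + s_{k-1}^2$ and the ensuing root bound remain valid even if some of the $A_k$ vanish. If one preferred to bypass the quadratic, one could instead estimate $s_k - s_{k-1} = (s_k^2 - s_{k-1}^2)/(s_k + s_{k-1}) \le \beta_k a_k/(2s_k) \le \beta_k/2$ after an $\varepsilon$-regularization to keep the denominator positive, which would even give the sharper bound $c + \tfrac12\sum_j \beta_j$; but the argument above is cleaner and already suffices for the claim.
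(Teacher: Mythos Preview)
Your argument is correct. Both your proof and the paper's hinge on solving a quadratic inequality and then applying $\sqrt{u^2+v^2}\le u+v$, but the intermediate quantity is different. The paper introduces the running maximum $M_k:=\max_{1\le m\le k}a_m$, bounds $a_m^2\le c^2+M_k\sum_{j=1}^\infty\beta_j$ for every $m\le k$, takes the maximum to get a single quadratic $M_k^2\le c^2+M_k\sum_j\beta_j$, and reads off the root bound directly. You instead track the partial sum via $s_k:=\sqrt{c^2+\sum_{j=1}^k\beta_j a_j}$ and obtain a recursive quadratic $s_k^2\le \beta_k s_k+s_{k-1}^2$, which yields $s_k\le s_{k-1}+\beta_k$ and then telescopes. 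The paper's route is marginally shorter (one quadratic rather than one per step), while your approach is more local and, as you note, with an $\varepsilon$-regularization would even deliver the sharper bound $c+\tfrac12\sum_j\beta_j$.
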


\begin{proof}
	For $k\in\mathbb N$, set $A_k := \max_{1\leq m \leq  k} a_m $. Then, for $1\leq m\leq k$, we have
	$$
	a_m^2 \leq c^2 + \sum_{j=1}^m \beta_j  a_j \leq c^2 + A_k \sum_{j=1}^{\infty} \beta_j.  
	$$
	Taking the maximum over $1\leq m\leq k$, we obtain
	$$
	A_k^2 \leq c^2 + A_k \sum_{j=1}^{\infty} \beta_j.
	$$
	Bounding by the roots of the corresponding quadratic equation, we obtain the result.
\end{proof}


\end{document}